\documentclass[11pt]{amsart}
\usepackage{amssymb}
\usepackage{amsmath}
\usepackage{nicefrac}
\usepackage{enumerate}
\usepackage{enumitem}
\usepackage{hyperref}
\usepackage{float}
\usepackage{xcolor}
\usepackage{MnSymbol}
\usepackage{xparse}
\usepackage{tabularx}
\usepackage{tikz}
\usepackage{mathtools}
\usepackage{dsfont}

\theoremstyle{plain}

\newtheorem{thm}{Theorem}[section]
\newtheorem{lemma}[thm]{Lemma}
\newtheorem{cor}[thm]{Corollary}
\newtheorem{prop}[thm]{Proposition}
\newtheorem{prob}[thm]{Problem}

\theoremstyle{definition}
\newtheorem{defn}[thm]{Definition}
\newtheorem{ex}[thm]{Example}
\newtheorem{rmk}[thm]{Remark}
\newtheorem{warn}[thm]{Warning}

\newtheorem*{assumption*}{Assumption}

\theoremstyle{remark}
\newtheorem*{claim*}{Claim}

\def\bT{\mathbf{T}}
\def\sX{\mathsf{X}}
\def\sT{\mathsf{T}}
\def\AT{W}
\def\Aut{\operatorname{Aut}}
\def\Fix{\operatorname{Fix}}
\def\Ker{\operatorname{Ker}}
\def\St{\operatorname{St}}

\def\Sym{\mathcal{S}}
\def\Alt{\mathcal{A}}
\def\shift{\mathsf{s}}
\def\dist{\operatorname{dist}}
\newcommand{\one}{\mathds{1}} 
\newcommand{\rs}{{\mathbb{P}^1(\mathbb {C})}}
\newcommand{\post}{P}
\newcommand{\crit}{C}
\newcommand{\IMG}{\Gamma^{discr}}
\newcommand{\MM}{\mathcal{M}}
\newcommand{\PP}{\mathcal{P}}

\newcommand{\Z}{\mathbb{Z}}
\newcommand{\C}{\mathbb{C}}

\newcommand{\sgn}{\text{sgn}}
\newcommand{\supp}{\operatorname{Supp}}

\let\emptyset\varnothing

\makeatletter
\NewDocumentCommand{\caseitem}{m}{%
  \phantomsection%
  \def\@currentlabel{#1}%
  \label{case:#1}%
  \textrm{\bf Case~#1:}%
}
\makeatother

\input xy
\xyoption {all}

\date{\today}
 
\parskip = 6pt
\parindent = 0.0in

\hoffset=-.7in
\voffset=-.7in
\setlength{\textwidth}{6in}
\setlength{\textheight}{9.1in}

\title[Profinite geometric iterated monodromy groups in degree $3$]{Profinite geometric iterated monodromy groups of postcritically finite polynomials in degree $3$}
\date{\today}

\author[]{Mikhail Hlushchanka}
\address{Mikhail Hlushchanka, Korteweg-de Vries Institute for Mathematics, University of Amsterdam, P.O. Box 94248, 1090 GE Amsterdam, The Netherlands \\mikhail.hlushchanka@gmail.com, ORCID: 0000-0002-2450-8023}

\author[]{Olga Lukina}
\address{Olga Lukina, Mathematical Institute, Leiden University, P.O. Box 9512,
2300 RA Leiden,
The Netherlands \\o.lukina@math.leidenuniv.nl, ORCID: 0000-0001-8845-3618}

\author[]{Dean Wardell}
\address{Dean Wardell, Mathematical Institute, Leiden University, P.O. Box 9512,
2300 RA Leiden,
The Netherlands \\
d.a.wardell@math.leidenuniv.nl, ORCID: 0009-0007-2176-2409}

\thanks{MSC2020 Classification: Primary 37P05, 37B05. Secondary 37F10.}

\thanks{Keywords: polynomial dynamics, profinite iterated monodromy groups, automorphisms of trees, self-similar actions, invariable generating sets, regular branch groups, torsion}

\thanks{M.H. was supported by the Marie Skłodowska-Curie Postdoctoral Fellowship
  under Grant No.\ 101068362.}

\begin{document}

\maketitle

\begin{abstract}
In this article, we study the properties of profinite geometric iterated monodromy groups associated to polynomials. Such groups can be seen as generic representations of absolute Galois groups of number fields into the automorphism group of a regular rooted tree. Our main result is that, for a degree $3$ postcritically finite polynomial over a number field, where each finite postcritical point has at least one preimage outside the critical orbits, the associated profinite geometric iterated monodromy group is finitely invariably generated. Moreover, this group is determined by the isomorphism class of the ramification portrait of the polynomial, up to conjugation by an automorphism of the ternary rooted tree. 
We also study the group-theoretical properties of such groups, namely their branch and torsion properties. In particular, we show that such groups are regular branch over the closure of their commutator subgroup, and that they contain torsion elements of any order realizable in the ternary tree.
\end{abstract}

\section{Introduction}\label{sec-introduction}

 In this article, we study profinite geometric iterated monodromy groups associated to cubic polynomials over number fields that have two distinct finite critical points with finite orbits, from a group-theoretical point of view. We show that, for polynomials satisfying Assumption~\ref{main-assumption} below, each such profinite group is finitely invariably generated and, moreover, it is determined up to an isomorphism by the combinatorial characteristics of the critical orbits. We provide (canonical) models for our profinite geometric iterated monodromy groups and use this description to study their properties: we show that they are regular branch and have torsion elements of all orders realizable by automorphisms of a ternary rooted tree. Our results apply to all postcritically finite polynomials of degree three such that every finite postcritical point has a preimage outside the critical orbits; see Assumption~\ref{main-assumption}.

Similar results for profinite geometric iterated monodromy groups associated to \emph{unicritical} polynomials of degree $d \geq 2$ were obtained by Pink \cite{Pink2013} in the quadratic case and by Adams and Hyde \cite{AdamsHyde} in the general case. In both \cite{AdamsHyde,Pink2013}, an invariable generating set for such groups was provided by their standard generators associated with \emph{finite} postcritical points of the given polynomial. This is no longer true in our setting, where, to obtain an invariable generating set of standard generators, one must include the generator corresponding to the \emph{infinite} critical point. This is a new phenomenon, not present in \cite{AdamsHyde,Pink2013}.

\subsection{Profinite iterated monodromy groups}\label{subsec: profinite-imgs}
We start by recalling the construction of such groups for polynomials over number fields; see \cite[Sec.~2]{CL2022} for more details and \cite{Jones-survey} for the discussion of the general case of rational maps over a global field.

Let $K$ be a number field, and let $f$ be a rational map of degree $d \geq 2$ with coefficients in $K$. Here we consider $f$ as the map $f\colon \mathbb{P}^1(K)\to \mathbb{P}^1(K)$ of the projective line over $K$, so that, counting multiplicity, $f$ has $2d-2$ critical points. If $f$ is a polynomial, one of its critical points is the point at infinity, denoted $\infty$, and we refer to $\infty$ as the \emph{infinite} critical point, and to the other critical points of $f$ as \emph{finite} critical points. Then, counting multiplicity, there are $d-1$ finite critical points, and $\infty$ has multiplicity $d-1$. For $n \geq 0$, we write $f^n$ for the $n$-th iterate of the rational map $f$, where $f^0 = {\rm id}$. We denote by $C(f)$ the set of distinct critical points of $f$, and by $\post(f)=\bigcup_{n\geq 1} f^n\big(\crit(f)\big)$ the \emph{postcritical set} of $f$. The rational map $f$ is called \emph{postcritically finite}, or \emph{PCF}, if $\post(f)$ is a finite set.

We assume now that $f$ is a polynomial over the number field $K$. Let $t$ be a transcendental element over $K$, and let $K(t)$ be the field of rational functions with coefficients in $K$. For each $n\geq 0$, the roots of the equations $f^n(z) =t$ are adjoined to $K(t)$ to create an increasing sequence of finite degree field extensions $K_n$ of $K(t)$, and an infinite degree field extension $\mathcal K := \bigcup_{n \geq 1} K_n$. 
 Polynomials $f^n(z) - t$ are separable and irreducible over $K(t)$ \cite{AHM2005}, and thus, for each $n \geq 0$, the Galois group $H_n:=\operatorname{Gal}\big(K_n/K(t)\big)$ acts transitively on the $d^n$ distinct roots of $f^n(z) - t$.

 The \emph{profinite arithmetic iterated monodromy group} $G^{arith}(f)$ associated to $f$ is obtained as a profinite subgroup of the automorphism group of a $d$-ary rooted tree $T$, which we introduce below.

\begin{defn}\label{defn-d-ary}
      A \emph{$d$-ary rooted tree} $T$, for $d \geq 2$, is an infinite graph without cycles, which has the set of vertices $V = \bigsqcup_{n \geq 0} V_n$, such that for all $n \geq 0$ we have the following: $|V_n| = d^n$, every $v \in V_n$ is connected by edges to precisely $d$ vertices in $V_{n+1}$, and every $v' \in V_{n+1}$ is connected by an edge to exactly one vertex in $V_n$. The finite sets $V_n$ are called the \emph{level sets}, or \emph{levels}, of the tree $T$, and the single vertex in $V_0$ is called the \emph{root} of $T$. We denote by $\Aut(T)$ the group of automorphisms of $T$, i.e., the maps that preserve the structure of~$T$.
\end{defn}

 Let $T$ be a $d$-ary rooted tree, where $d=\deg(f)$, with the vertex set $V = \bigsqcup_{n \geq 0} V_n$ as above.
 We identify the root in $V_0$ with $t$, and for each $n \geq 1$, we identify $V_n$ with the set of the solutions of $f^n(z) = t$ so that $v' \in V_{n}$ and $v \in V_{n-1}$ are connected by an edge if and only if $f(v') = v$. Since $K_{n-1}\subset K_{n}$, we have an induced epimorphism $H_{n}\to H_{n-1}$, and the induced action of $H_n$ on $\bigsqcup_{i=0}^nV_i$ preserves the relation of being connected by an edge. Thus we obtain an injective homomorphism 
  $$\rho\colon {\rm Gal}\big(\mathcal K/K(t)\big) \to \Aut(T),$$
from the Galois group of $\mathcal K$ into the automorphism group of $T$ and define the profinite arithmetic iterated monodromy group as its image
  $$G^{arith}(f) := \rho\big({\rm Gal}\big(\mathcal K/K(t)\big)\big) \cong \varprojlim\{H_n \to H_{n-1}, \, n\geq 1\}.$$
The group $G^{arith}(f)$ is a separable profinite topological group, and it can be thought of as a generic \emph{arboreal representation} of the absolute Galois group of the number field $K$ into ${\rm Aut}(T)$, via a specialization to a value of $t$, see \cite{Jones-survey}. Thus the study of absolute Galois groups of number fields motivates the interest in the study of profinite arithmetic iterated monodromy groups.

Arboreal representations for quadratic polynomials, over various fields, have been studied extensively. Profinite arithmetic iterated monodromy groups in the quadratic case were described in detail by Pink \cite{Pink2013}, who used the approach via profinite geometric iterated monodromy groups, applicable also to any rational map of degree $d \geq 2$.

\begin{defn}\label{defn-geometric-pimg}
The \emph{profinite geometric iterated monodromy group} $G^{\text{geom}}(f)$ is defined as the image $\rho \big({\rm Gal}\big(\mathcal K/L(t)\big)\big) \subset \Aut(T)$, where $L$ is the subfield of $\mathcal{K}$ consisting of all elements algebraic over $K$, i.e., $L = \overline{K} \cap \mathcal{K}$ with $\overline{K}$ denoting a separable closure of $K$.
\end{defn}

Since the profinite group $G^{geom}(f)$ does not change if the field $L$ is extended, it can be calculated over $\mathbb C(t)$ \cite{Jones-survey}. Moreover, when $f$ is PCF, there is a natural isomorphism between $G^{geom}(f)$ and the profinite completion of the \emph{discrete iterated monodromy group} $\IMG(f)$ associated to $f$; see Section~\ref{subsec: img-discr} and Proposition~\ref{prop: discrete-vs-geometric-img} in particular.

The group $G^{arith}(f)$ contains $G^{geom}(f)$ as a normal subgroup. Using the theory of \'etale fundamental groups, one can choose a \emph{standard set} of topological generators $S=\{g_p: p\in\post(f)\}\subset \Aut(T)$ of $G^{geom}(f)$, with each generator corresponding to a postcritical point. If $f$ is PCF and $K$ is a number field, to define the standard generators $g_p$, one can use instead the fundamental group of $\rs\setminus \post(f)$; see Section~\ref{subsec-generators}. In either approach, each generator $g_p\in S$ is determined up to conjugation in $G^{geom}(f)$, and its action, described using wreath recursion, is specified (up to conjugation in $\Aut(T)$) by the ramification portrait of $f$, see Definition~\ref{defn-ramification-portrait}.  The results in \cite{Pink2013} state that the profinite group $G^{geom}(f)$ for a quadratic polynomial $f$ is determined, up to isomorphism, by only combinatorial characteristics of the orbit of the unique finite critical point of $f$: the length of the orbit, whether the orbit is strictly periodic, and the length of the periodic cycle. This makes it possible to choose a set of \emph{model generators} for $G^{geom}(f)$, and study in detail the properties of $G^{geom}(f)$ and $G^{arith}(f)$. Using this approach, Pink also described $G^{geom}(f)$ and $G^{arith}(f)$ in the case when $f$ is a quadratic rational map with an infinite postcritical set  \cite{Pink2013a}.

Cortez and Lukina \cite{CL2022} used the results of \cite{Pink2013} in the study of settled elements, giving a partial answer to the conjecture by Boston and Jones \cite{BJ2007} about images of Frobenius elements under arboreal representations. Ejder \cite{Ejder2024} used the approach of \cite{Pink2013} to discover the first example of a profinite arithmetic iterated monodromy group that does not contain an odometer. Arboreal representations for the polynomial $f(z) = -2z^3+3z^2$ over a large class of number fields were described by Benedetto et al. \cite{BFHJY2017}. The polynomial in \cite{BFHJY2017} is a representative of the class of normalized Belyi maps, whose profinite iterated monodromy groups were later studied by Bouw, Ejder, and Karemaker \cite{BEK2018} and by Ejder \cite{Ejder2022}.  
Arboreal representations for cubic polynomials with infinite orbits of the finite critical points colliding in the $\ell$-th iterate (with $\ell\geq 2$) were studied by Benedetto et al. \cite{Ben-al2025}. Necessary and sufficient conditions for an arboreal representation to have finite index in $\Aut(T)$, for cubic polynomials over various fields, were obtained by Bridy and Tucker \cite{BT2019}. Recent work by Adams and Hyde \cite{AdamsHyde} implements Pink's program for unicritical PCF polynomials (that is, for degree $d\geq 2$ PCF polynomials with a single finite critical point of multiplicity $d-1$).

In this paper, we make a step towards generalizing the results and techniques of \cite{Pink2013} beyond the unicritical case by providing an explicit description of the profinite geometric iterated monodromy group $G^{geom}(f)$ for PCF cubic polynomials $f$ with two distinct finite critical points, such that every finite postcritical point of $f$ has a preimage outside the critical orbits. 
Generalizing the program in \cite{Pink2013} to higher degrees, already at the first steps, one encounters technical difficulties which do not arise in the quadratic case. First, the possible combinatorial types of the (potentially colliding) critical orbits become much more diverse, compared to the only two alternatives (strictly preperiodic or periodic) in the unicritical setting.  Moreover, the results in \cite{Pink2013} depend on the property of such groups called \emph{semirigidity}: for a given $f$, the action of each element of the standard generating set $S$ of $G^{geom}(f)$ is specified up to conjugation in $\Aut(T)$, and semirigidity is the property that, after replacing each element in $S$ by a conjugate in $\Aut(T)$ (with a conjugating element depending on $g_p \in S$), one obtains a subgroup of $\Aut(T)$ conjugate to $G^{geom}(f)$. The property of semirigidity is closely related to the problem of \emph{invariable generation} in countable and profinite groups: given a group $G$ with a generating set $S$, the set $S'$ of pairwise conjugates of the elements of $S$ by different elements of $G$ may not be a generating set for $G$. This led to the development of the notion of an \emph{invariable generating set} of a group, see Definition~\ref{defn-inv-gen} below. Here, if $G$ is a profinite group, then $S$ is understood to be a set of topological generators of $G$, that is, $S$ generates a subgroup of $G$ that is dense in $G$ in the profinite topology.

\begin{defn}\label{defn-inv-gen}
   Let $G$ be a group. A set $S \subset G$ 
    \emph{invariably generates} $G$ if, for any choice of $g_s \in G$, the set $\{g_s s g_s^{-1}: s \in S\}$ generates $G$.   
    A group $G$ is (\emph{countably}, resp. \emph{finitely}) \emph{invariably generated} if there is a (countable, resp. finite) set $S \subset G$ that invariably generates~$G$.
\end{defn}

The study of invariable generation is currently an active area of research in group theory, see \cite{KLS2015,Lucchini2017} and references therein. An example of a countable group which is not invariably generated is the free group of rank $k \geq 2$  \cite{Wiegold1976},  and there exist many others, see, for instance, \cite{KLS2015,Wiegold1977}. Profinite groups are invariably generated but need not be finitely invariably generated \cite[Sec.~4]{KLS2015}. For instance, Lucchini \cite{Lucchini2017} proves that, for a given positive integer $k$, the infinite iterated wreath product of finite cyclic groups of prime orders, with each order repeating at most $k$ times, is not finitely invariably generated, even though  it is (topologically) finitely generated. Kantor, Lubotzky and Shalev \cite[Thm~1.5]{KLS2015} show that the profinite completion of any Fuchsian group is not finitely invariably generated. Moreover, there exist groups whose profinite completions are finitely invariably generated, but the groups themselves are not \cite{KLS2015}.

In the setting of profinite geometric iterated monodromy groups, if $f$ is a quadratic polynomial and $T$ is a binary rooted tree, then $G^{geom}(f) \subset \Aut(T)$ is approximated by an infinite sequence of finite $2$-groups, which are nilpotent. Invariable generation in nilpotent groups is well-studied, and the nilpotent properties of $2$-groups are the basis of the argument in \cite[Lem.~1.3.2]{Pink2013}, which shows that $G^{geom}(f)$ is finitely invariably generated. 
This argument does not generalize to degree $3$, apart from the case when $f$ is unicritical, as the finite groups approximating $G^{geom}(f)$ are no longer nilpotent. For unicritical PCF polynomials in arbitrary degree $d \geq 2$, the invariable generation result follows from \cite[Lem.~4.19 and Cor.~4.23]{AdamsHyde}. In both settings, one can choose the standard generating set $S$ of  $G^{geom}(f)$ minus the generator corresponding to $\infty$ as an invariable generating set.

\medskip
\subsection{Main results} In this section, $T$ always denotes a ternary rooted tree. 
We distinguish classes of polynomials using their \emph{ramification portrait}, a notion that can also be introduced for rational maps. 

\begin{defn}\label{defn-ramification-portrait}
    Let $f:\mathbb P^1(\mathbb{C})\to \mathbb P^1(\mathbb{C})$ be a rational map. The \emph{ramification index} of $f$ at $p\in \mathbb{P}^1(\mathbb C)$ is the natural number $\deg(f,p)$ that records the multiplicity of the zero $p$ of the map $z\mapsto f(z)-f(p)$, after a suitable coordinate change. The \emph{ramification portrait} of $f$ is the directed graph $\mathcal P_f$ with vertex set $V(\mathcal P_f):=C(f)\bigcup P(f)$, such that for each $p\in V(\mathcal P_f)$ there are $\deg(f,p)$ edges from $p$ to $f(p)$. 
\end{defn}

\begin{assumption*} 
    In this paper, we consider PCF cubic polynomials $f$ such that:
    
    \begin{enumerate}[label=({\rm Y})] 
    \item\label{main-assumption} The ramification portrait $\PP_f$ of $f$ has exactly one vertex with three incoming edges. 
\end{enumerate}
\end{assumption*}

Assumption \ref{main-assumption} is equivalent to saying that for each finite postcritical point $p\in P(f)\setminus \{\infty\}$ we have 
$$f^{-1}(p)\not\subset C(f)\cup P(f).$$ 
That is, each such point $p$ has a preimage under $f$ that is neither critical nor postcritical. In particular, if $f$ is a polynomial and Assumption~{\normalfont \ref{main-assumption}} is satisfied, then the point  with three incoming edges is the infinite critical point $\infty$. Consequently,  unicritical polynomials of degree $3$ do not satisfy Assumption~{\normalfont \ref{main-assumption}}.

\begin{thm}\label{main-theorem}
    Let $f$ be a PCF polynomial map of degree $3$ over a number field $K$ with postcritical set $\post(f)$, and let 
     $$S  = \{g_p : p \in \post(f)\} \subset \Aut(T)$$ 
     be a standard generating set for the profinite geometric iterated monodromy group $G^{geom}(f)$, as in Definition~\ref{def: standard-generators}. 
    
    Suppose $f$ satisfies Assumption \emph{\ref{main-assumption}}. Then the following is true:
    
    \begin{enumerate}[label=(\roman*),font=\normalfont]
        \item \label{item-MT-1}The profinite geometric iterated monodromy group $G^{geom}(f)$ is finitely invariably generated, with invariable generating set  
      $S$. In particular, $S$ includes $g_\infty$, the generator corresponding to $\infty$, which acts transitively on every level of the tree $T$.
        \item \label{item-MT-2} Let $f'$ be another polynomial of degree 3 over a number field, and suppose the ramification portraits of $f$ and $f'$ are isomorphic as graphs. Then there exists some $w\in \Aut(T)$ that conjugates the profinite geometric iterated monodromy groups associated to $f$ and $f'$, that is,
        $$G^{geom}(f)=wG^{geom}(f')w^{-1}.$$
   \end{enumerate} 
\end{thm}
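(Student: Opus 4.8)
The plan is to prove both parts by reducing everything to the \emph{wreath recursion} of the standard generators, which by the results cited is specified up to conjugacy in $\Aut(T)$ by the ramification portrait $\PP_f$. The key structural input is Assumption~\ref{main-assumption}: since $\infty$ is the unique vertex with three incoming edges, every finite postcritical point $p$ has a preimage that is neither critical nor postcritical, and the standard generator $g_\infty$ acts as a full $3$-cycle on $V_1$ (hence, by self-similarity, transitively on each level). The generators $g_p$ for finite $p$, by contrast, have restricted cycle structure on $V_1$ governed by $\PP_f$. I would first set up the wreath-recursion formulas for all of $S$ explicitly, recording for each $g_p$ which coordinate(s) carry a nontrivial recursive entry and what the top-level permutation is; this is the combinatorial heart that both parts rely on.

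For part~\ref{item-MT-1}, I would argue by induction on the level $n$ that for \emph{any} choice of conjugators $g_s \in G^{geom}(f)$, the conjugates $\{g_s s g_s^{-1}\}$ generate the image $H_n$ of $G^{geom}(f)$ in $\Aut(T_n)$. The base case is the transitivity of $g_\infty$ on $V_1$. For the inductive step, I would pass to the first-level stabilizer and use that conjugation within $G^{geom}(f)$ permutes the recursive coordinates in a controlled way: because $g_\infty$ cycles the three subtrees, the three ``copies'' of the lower-level generators produced by the recursion are simultaneously available, and invariable generation at level $n$ combined with transitivity lifts to level $n+1$. The essential point to verify is that the portrait data guarantees that after conjugation one still recovers enough recursive entries in each subtree; here the inclusion of $g_\infty$ is \emph{necessary}, as the paper emphasizes, precisely to supply the transitive top-level action that the finite generators alone cannot.

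For part~\ref{item-MT-2}, the strategy is to show that the conjugacy class of $G^{geom}(f)$ in $\Aut(T)$ depends only on the isomorphism type of $\PP_f$. An isomorphism of ramification portraits $\PP_f \cong \PP_{f'}$ induces a bijection $p \mapsto p'$ between postcritical sets respecting the dynamics and ramification indices; this means the standard generators of $G^{geom}(f)$ and $G^{geom}(f')$ have the same wreath-recursion data up to the conjugacy ambiguity already present in the definition of standard generators. I would then invoke the semirigidity property discussed in the introduction: the fact that replacing each standard generator by a conjugate in $\Aut(T)$ yields a conjugate copy of the whole group. Concretely, one builds the conjugating element $w \in \Aut(T)$ level by level, matching the recursive structure of the two generating sets, and uses part~\ref{item-MT-1} (invariable generation) to conclude that the conjugated generators of $G^{geom}(f')$ generate a group conjugate to $G^{geom}(f)$, not merely a subgroup.

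The main obstacle I anticipate is exactly the interplay between invariable generation and semirigidity in the non-nilpotent setting. In the quadratic case the finite quotients are $2$-groups and nilpotence drives the argument; in degree $3$ the approximating finite groups are no longer nilpotent, so the inductive lifting in part~\ref{item-MT-1} cannot rely on the Frattini-type arguments available to Pink. The delicate step is controlling, uniformly over all conjugating choices $g_s$, that the top-level $3$-cycle from $g_\infty$ together with the level-$n$ invariable generation forces the full group rather than a proper subgroup at level $n+1$; making this rigorous likely requires a careful analysis of which subgroups of the relevant wreath product are compatible with the fixed portrait data, and this is where Assumption~\ref{main-assumption} (one finite critical point having an ``extra'' non-postcritical preimage) does the real work.
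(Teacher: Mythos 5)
Your skeleton does match the paper's actual route: model wreath recursions determined by the ramification portrait, an induction on tree levels for part~\ref{item-MT-1} with $g_\infty$ supplying the transitive top-level action, and a level-by-level construction of the conjugator $w$ for part~\ref{item-MT-2}. But as written there is a genuine circularity in the logical order. You propose to ``set up the wreath-recursion formulas for all of $S$ explicitly'' and then run the induction for part~\ref{item-MT-1} directly on $G^{geom}(f)$; however, Proposition~\ref{prop: img-gen-structure} specifies each standard generator $g_p$ only up to an \emph{individual} conjugacy in $\Aut(T)$, with unrelated conjugators for different generators, so the $g_p$ do not literally satisfy any model recursion. Reducing part~\ref{item-MT-1} to a group with exact recursions already requires the simultaneous-conjugation (semirigidity) statement --- the paper's Theorem~\ref{thm-simultaneous-conjugation} --- which in turn invokes invariable generation for the abstract model groups (Theorem~\ref{thm-conjugates}, via Corollary~\ref{EXDOC: Thm: invariable generation}). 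Since you also invoke part~\ref{item-MT-1} inside your proof of part~\ref{item-MT-2}, your two parts feed on each other. The paper untangles this by first proving invariable generation for the recursively defined model group $G=\llangle a,b,c_1,\dots,c_r\rrangle$ on its own (Theorem~\ref{thm-conjugates}), then proving Theorem~\ref{thm-simultaneous-conjugation}, and only afterwards transporting both statements to $G^{geom}(f)$ through a single $w$ furnished by Proposition~\ref{prop: model-group-from-map}.

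The second gap is that your inductive step for part~\ref{item-MT-1} is asserted rather than proved: ``invariable generation at level $n$ combined with transitivity lifts to level $n+1$'' is exactly the hard point, and passing to the first-level stabilizer while noting that $g_\infty$ cycles the subtrees does not by itself deliver it. The paper's mechanism is concrete: the elements $A^2$, $B^2$, $C_i$ lie in the level-one stabilizer; after conjugating them into normal form (Corollary~\ref{cor: special-element}, Corollary~\ref{cor: conjugacy-in-G}), their first coordinates together with $D^3|_1$ satisfy the induction hypothesis at level $n-1$ --- crucially because Condition~\ref{cond:Y2} guarantees each generator occurs exactly once among the sections; then the self-replicating property (Lemma~\ref{lem: model-self-replicativity}) and Lemma~\ref{EXDOC: Le: forms of elements of G} manufacture elements of the forms $(g,*,\one)$, $(*,g,\one)$, etc., in $H_n$, which are used to conjugate $A,B,C_i$ back to the exact generators of $G_n$. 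Without this apparatus (or an equivalent), the lifting claim fails for a general subgroup of the wreath product; it is here, not merely in the transitivity of $g_\infty$, that Conditions~\ref{cond:Y1}--\ref{cond:Y4} are consumed. Finally, in part~\ref{item-MT-2} your level-by-level conjugators $w_n$ need not be coherent under restriction; the paper repairs this with a compactness/accumulation argument (Step~4 of Theorem~\ref{thm-simultaneous-conjugation}), which your sketch omits.
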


\begin{rmk} 
    Our approach to defining the standard generators for $G^{geom}(f)$ is via discrete iterated monodromy groups, and our results are stated for groups associated to polynomials over number fields. Adams and Hyde \cite{AdamsHyde} work in a more general setting, by defining and using abstract algebraic \emph{paths}, see \cite[Def.~3.4]{AdamsHyde}, instead of topological \emph{connecting paths} from Section~\ref{subsec: self-similarity}, as well as algebraic \emph{inertia generators} over  postcritical points $p\in\post(f)$ instead of topological \emph{$p$-petals} from Section~\ref{subsec-generators}. Lemmas~3.11 and 3.12 from \cite{AdamsHyde} provide analogues of the structural statements about the standard generators of $G^{geom}(f)$ from Proposition~\ref{prop: img-gen-structure}, in their more general setting. In the unicritical cubic case, this allows them to consider polynomials over fields of any characteristic coprime to $3$, rather than only over number fields. Therefore, we expect that our results in Theorems~\ref{main-theorem}, \ref{thm-reg-branch}, \ref{thm-filtration} and Corollary~\ref{polynomial-branch} extend to polynomials over fields of characteristic coprime to $2$ and $3$.
\end{rmk}

To prove Theorem~\ref{main-theorem},\ref{item-MT-2}, we assign to each PCF cubic polynomial $f$ satisfying Assumption~\ref{main-assumption} a canonical \emph{\ref{main-assumption}-restricted model group} $G^{model}(f)$; see Section~\ref{Section:Model gps}, and especially Proposition~\ref{prop: model-group-from-map}. By this we mean that we provide an explicit set of generators $S^{model}(f)\subset \Aut(T)$, described using wreath recursions, such that $G^{geom}(f)$ is conjugate to the profinite group $G^{model}(f)$ topologically generated by $S^{model}(f)$. The \ref{main-assumption}-restricted model group $G^{model}(f)$ is unique up to relabelling the finite postcritical points in the construction of $S^{model}(f)$, and it is determined entirely by the ramification portrait of $f$. 

We emphasize that the conjugacy in Theorem \ref{main-theorem},\ref{item-MT-2}, holds \emph{only} for the profinite groups $G^{geom}(f)$ and $G^{geom}(f')$ associated to the polynomials $f$ and $f'$ with isomorphic ramification portraits, but \emph{need not} hold for the countable dense subgroups of $G^{geom}(f)$ and $G^{geom}(f')$, such as the discrete iterated monodromy groups $\IMG(f)$ and $\IMG(f')$. This is a subtlety that arises when working with profinite groups. Indeed, the following problem goes back to Grothendieck \cite{Grothen}: let $h\colon \Gamma_1 \to \Gamma_2$ be a homomorphism of finitely presented residually finite countable groups, and suppose $G_i$ contains $\Gamma_i$ as a dense subgroup, for $i = 1,2$. Assume that the induced homomorphism $\widehat h\colon G_1 \to G_2$ is an isomorphism. Must $h$ be an isomorphism? This problem was answered in the negative for finitely generated groups in \cite{BL2000,PT1986,P2004}, and for finitely presented groups in \cite{BG2004}. For the automorphism group of the binary rooted tree, Nekrashevych \cite{Nekr2014} constructed an uncountable family of subgroups (with $3$ generators) that are pairwise non-isomorphic but have isomorphic profinite completions. In Theorem~\ref{main-theorem},\ref{item-MT-2}, although the two profinite geometric iterated monodromy groups $G^{geom}(f)$ and $G^{geom}(f')$ are conjugate, the conjugating isomorphism need not map the countable subgroup $\IMG(f)$ onto the subgroup $\IMG(f')$, and the properties of these subgroups may be different. Explicit cubic polynomials exhibiting this phenomenon can be constructed.

\medskip
A question related to the question of Grothendieck discussed above is: when does a property of a profinite group induces a similar property in a countable dense subgroup? For instance, it is known that, unless extra assumptions are satisfied (e.g., the countable group is nilpotent \cite{RibZal}), the non-triviality of the torsion subgroup of a profinite group need not imply that the torsion subgroup of its countable dense subgroup is non-trivial; see \cite{DHL2017,Lubotsky1993} for examples. In Theorem~\ref{thm-reg-branch} and Corollary~\ref{polynomial-branch}, we study the branch and torsion properties for, respectively, \ref{main-assumption}-restricted model groups and profinite geometric iterated monodromy groups associated to cubic PCF polynomials satisfying Assumption~\ref{main-assumption}.
We refer to Definition~\ref{weakly-branch} for the definition of a (weakly) regular branch group. 

\begin{thm}\label{thm-reg-branch}
    Let $G$ be a \emph{\ref{main-assumption}}-restricted model group, topologically generated by $a,b,c_1,\ldots,c_r\in \Aut(T)$, with $r\geq 0$, as defined in Definition~\ref{defn-group}, and let $\Gamma:=\langle a,b,c_1,\ldots,c_r\rangle$ be the corresponding countable dense subgroup. Then:
    \begin{enumerate}[label=(\roman*),font=\normalfont]
        \item \label{item1-branch} $\Gamma$ is regular weakly branch over its commutator subgroup $[\Gamma,\Gamma]$.
        \item \label{item2-branch} $G$ is regular branch over the subgroup $\mathfrak C := \overline{[G,G]}$. In particular, the index of $\mathfrak C$ in $G$ is at most $2^{r+2}$, where $r+2$ is the number of topological generators of $G$.
          \item \label{item3-torsion} For each $n \geq 0$, the countable group $\Gamma$ contains an element of order $3^n$.
        \item \label{item4-torsion} For each $m,n \geq 0$, the profinite group $G$ contains an element of order $2^m3^n$. Thus, all torsion orders realizable in $\Aut(T)$ are represented in $G$.
        
    \end{enumerate}
\end{thm}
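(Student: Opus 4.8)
The four statements build on one another, so the plan is to establish the branch structure first and then harvest the torsion from it. By construction (Definition~\ref{defn-group}) the generators $a,b,c_1,\dots,c_r$ are given by wreath recursions, so $\Gamma$ and $G$ are self-similar; I write $\psi\colon \St(1)\to \Aut(T)\times\Aut(T)\times\Aut(T)$ for the restriction-to-subtrees isomorphism. To prove \ref{item1-branch}, that $\Gamma$ is regular weakly branch over $K:=[\Gamma,\Gamma]$, I must verify the geometric containment $\psi^{-1}(K\times K\times K)\le K$. The plan is: for each commutator $x=[s,t]$ of generators and each coordinate $j\in\{1,2,3\}$, exhibit by an explicit commutator computation an element of $K\cap\St(1)$ whose $\psi$-image is $x$ in slot $j$ and trivial elsewhere. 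Since the $x$'s generate $K$, this gives $K\times K\times K\le K$. The transitive generator $a=g_\infty$, which is the ternary odometer with $a^3=(a,a,a)$, relates the three slots by conjugation, but because $a$ has nontrivial sections this introduces controlled factors from $\langle a\rangle$ in the other coordinates; these are removed using the diagonal element $a^3$ and the relation $a^2\in K$ (a byproduct of the abelianization computation below), which is why I prefer to target each coordinate directly.

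For \ref{item2-branch}, taking closures in the first-level containment of \ref{item1-branch} gives $\mathfrak C^{\times3}\le \mathfrak C$ for $\mathfrak C=\overline{[G,G]}=\overline K$, so it remains to bound the index $[G:\mathfrak C]=|G^{ab}|$. Here I use two inputs: the sign homomorphisms $\epsilon_n\colon \Aut(T)\to\Z/2$ recording the parity of the level-$n$ permutation, under which $a$ is even at every level since it acts by a $3^n$-cycle; and the fundamental-group relation of the form $g_\infty\cdot\prod_{p\in\post(f)\setminus\{\infty\}}g_p=1$ coming from the product of inertia loops on the punctured sphere. Using the wreath recursions I show each finite generator squares into $[G,G]$, hence has order dividing $2$ in $G^{ab}$; the product relation then forces $2\,g_\infty\equiv 0$ as well. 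Consequently $G^{ab}$ is generated by the $r+2$ generator-images, each of order at most $2$, whence $|G^{ab}|\le 2^{r+2}$ and $\mathfrak C$ is open. Together with $\mathfrak C^{\times3}\le\mathfrak C$ this is exactly regular branch.

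With $K\times K\times K\le \Gamma$ in hand I build $3$-power torsion recursively for \ref{item3-torsion}. The obstacle is that the only transitive-at-top element available is the odometer $a$, which has infinite order, and the pure $3$-cycle is \emph{not} in $\Gamma$ (its closure $\overline{\langle a\rangle}\cong\Z_3$ is torsion-free); so the naive element $\psi^{-1}(t_{n-1},1,1)$ composed with a top $3$-cycle is unavailable. Instead I start from an explicit finite-order word $t_1\in K$ of order $3$, obtained by choosing sections so that the cyclic product defining $t_1^{\,3}$ collapses, and given $t_{n-1}\in K$ of order $3^{n-1}$ I use the rigid-stabilizer containment to place it in a single first-level subtree and combine it with a finite-order transitive element assembled from the generators so that the resulting $t_n\in K$ has $t_n^{\,3}$ equal to a diagonal of $t_{n-1}$, giving order $3^n$; membership in $\Gamma$ is guaranteed throughout by $K^{\times3}\le K$. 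For \ref{item4-torsion} I run the same recursion with a transposition coming from the generators $b,c_i$ in place of the $3$-cycle to obtain elements of order $2^m$ inside $\mathfrak C$, taking closures to remain in $G$. Finally, to realise order $2^m3^n$ I place an order-$2^m$ element of $\mathfrak C$ in one first-level subtree and an order-$3^n$ element of $\mathfrak C$ in another via $\psi^{-1}(\cdot,1,1)$ and $\psi^{-1}(1,\cdot,1)$; these commute by disjoint support, so their product has order $2^m3^n$. Since every torsion element of $\Aut(T)$ has order of the form $2^m3^n$, this exhausts the realisable orders.

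I expect the genuine difficulty to be the explicit first-level computation underlying \ref{item1-branch}: producing, for every commutator generator $x$ of $K$ and every coordinate, an element of $K$ with the prescribed $\psi$-image, while cancelling the spurious odometer sections introduced when relating coordinates. This has to be carried out uniformly over the wreath recursions attached to all ramification portraits allowed by Assumption~\ref{main-assumption}, and it is the step on which both the branch statements and, through the rigid stabilizers, the torsion constructions ultimately rest.
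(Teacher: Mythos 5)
Your high-level architecture mirrors the paper's (first-level commutator containment for the branch statements, an index bound from generator squares, recursive torsion constructions, and a disjoint-slot combination at the end), but the mechanisms you propose rest on a misreading of Definition~\ref{defn-group}, and two of your central claims are false. In a \ref{main-assumption}-restricted model group no generator is the odometer: $a=(x,\one,\one)(1\,2)$ acts as a transposition on the first level (and typically has infinite order), while $g_\infty$ is \emph{not} among the model generators — it corresponds to the product $abc_1\cdots c_r$, whose cube is not $(d,d,d)$. So your device for relating the three slots by conjugating with ``$a=g_\infty$'', together with the cancellations via ``$a^3=(a,a,a)$'' and ``$a^2\in K$'', is unavailable; indeed $a$ does not even act transitively on the first level, and $a^2\in[\Gamma,\Gamma]$ for the \emph{discrete} group is unjustified (the paper only ever obtains $\ell^2\in\overline{[G,G]}$, in the closure). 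Your assertion that the pure $3$-cycle is not in $\Gamma$ is also wrong: $[a,b]=(\one,\one,\one)(1\,2\,3)$ holds exactly (Part~\ref{item: model-iii} of Lemma~\ref{lemma-model-groups}), and this element is the linchpin of the actual proof — it conjugates the three slots, yields $[\Alt_3]^\infty\subset\mathfrak C$, and gives part \ref{item3-torsion} immediately by truncating $g=(g,\one,\one)(1\,2\,3)$ at level $n$. The tool you are missing for part \ref{item1-branch} is self-replication (Lemma~\ref{lem: model-self-replicativity}): since $(g,p,\one)$ and $(h,\one,q)$ exist in $\Gamma$ for all $g,h$, the single identity $[(g,p,\one),(h,\one,q)]=([g,h],\one,\one)$ places every commutator in one slot with the others trivial, with no per-generator computation and no odometer bookkeeping at all.

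For part \ref{item2-branch} and the $2$-power torsion in \ref{item4-torsion}, the claim ``each generator squares into $[G,G]$, shown using the wreath recursions'' is precisely the crux, and your sketch would stall: the naive computation is circular, since Condition~\ref{cond:Y1} allows $a=(a,\one,\one)(1\,2)$, whence $a^2=(a,a,\one)$, and reducing modulo elements of type $(g,g^{-1},\one)$ returns you to the question of whether $a^2\in\mathfrak C$. The paper breaks the circle by a level-by-level induction (Proposition~\ref{Prop: We can multiply the generators to have order 2}) producing $p,q\in[\Alt_3]^\infty\subset\mathfrak C$ with $(p\ell q)^2=\one$, followed by a compactness argument; the same order-$2$ modified generator $a'=(\one,\one,x')(1\,2)$ — not a ``transposition coming from the generators'', which need not lie in $G$ — is what drives the $2^{m}$-recursion in part \ref{item4-torsion}. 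Your two auxiliary inputs for the index bound are misdirected: the sign homomorphisms see the model generator $a$ as \emph{odd} at the top level (it is $g_\infty$, not $a$, that acts by $3^n$-cycles), and the inertia product relation concerns the standard generators of $G^{geom}(f)$, not the abstractly defined model group; neither is needed, since once $\ell^2\in\mathfrak C$ for the $r+2$ generators, density of $\Gamma$ and finiteness of the coset list give $[G:\mathfrak C]\le 2^{r+2}$. Your final step — placing an order-$2^m$ and an order-$3^n$ element of $\mathfrak C$ in disjoint first-level slots — is sound and is essentially the paper's element $(g,g^{-1},h)$.
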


\begin{cor}\label{polynomial-branch}
    Let $f$ be a PCF polynomial map of degree $3$ over a number field $K$. Suppose $f$ satisfies Assumption \emph{\ref{main-assumption}}. Then: 
    \begin{enumerate}[label=(\roman*),font=\normalfont]
    \item \label{item-poly-branch} The profinite geometric iterated monodromy group $G^{geom}(f)$ is regular branch over the closure of its commutator subgroup.
     \item \label{item-poly-torsion} For any integers $m,n\geq 0$, the group $G^{geom}(f)$ contains an element of order $2^m3^n$. Thus, all torsion orders realizable in $\Aut(T)$ are represented in $G^{geom}(f)$.
       \end{enumerate}
\end{cor}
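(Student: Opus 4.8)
The plan is to deduce both parts of the corollary from Theorem~\ref{thm-reg-branch} by transporting its conclusions through the conjugacy between $G^{geom}(f)$ and a model group. First I would invoke the model-group construction: by Proposition~\ref{prop: model-group-from-map} (together with Theorem~\ref{main-theorem},\ref{item-MT-2}), the polynomial $f$ determines a \ref{main-assumption}-restricted model group $G^{model}(f)$, topologically generated by a standard set of the form $\{a,b,c_1,\ldots,c_r\}\subset\Aut(T)$, and there is an element $w\in\Aut(T)$ with
$$G^{geom}(f) = w\,G^{model}(f)\,w^{-1}.$$
Since $G^{model}(f)$ is precisely a \ref{main-assumption}-restricted model group of the kind treated in Theorem~\ref{thm-reg-branch}, parts \ref{item2-branch} and \ref{item4-torsion} of that theorem apply directly to it: it is regular branch over $\mathfrak C=\overline{[G^{model}(f),G^{model}(f)]}$, and it contains an element of order $2^m3^n$ for every $m,n\geq 0$. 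It then remains only to check that these two properties survive conjugation by $w$.

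The torsion statement \ref{item-poly-torsion} is immediate: conjugation by $w$ is an automorphism of $\Aut(T)$ and therefore preserves the order of every element, so an element of order $2^m3^n$ in $G^{model}(f)$ is carried to an element of the same order in $G^{geom}(f)$. Combined with Theorem~\ref{thm-reg-branch},\ref{item4-torsion}, this gives \ref{item-poly-torsion}.

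For the branch statement \ref{item-poly-branch}, write $\phi=w\,(\cdot)\,w^{-1}$ for the induced topological group isomorphism from $G^{model}(f)$ onto $G^{geom}(f)$. Being a homeomorphism that respects the group operation, $\phi$ maps the closure of the commutator subgroup onto the closure of the commutator subgroup, so that $\phi(\mathfrak C)=\overline{[G^{geom}(f),G^{geom}(f)]}$. The key remaining point is that the regular branch property of Definition~\ref{weakly-branch} is invariant under conjugation by an element of $\Aut(T)$: such $w$ sends the subtree rooted at a level-one vertex $v$ isomorphically onto the subtree rooted at $w(v)$, hence it intertwines the level-one section maps of the two groups and carries the geometric embedding $\mathfrak C\times\cdots\times\mathfrak C\hookrightarrow\St_{G^{model}(f)}(1)$ to the corresponding embedding $\phi(\mathfrak C)\times\cdots\times\phi(\mathfrak C)\hookrightarrow\St_{G^{geom}(f)}(1)$, preserving the finiteness of the relevant indices. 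This yields that $G^{geom}(f)$ is regular branch over $\overline{[G^{geom}(f),G^{geom}(f)]}$, which is \ref{item-poly-branch}.

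I expect the only genuine work to be this last verification of conjugation-invariance of the regular branch property; everything else is a formal consequence of the conjugacy provided by Theorem~\ref{main-theorem},\ref{item-MT-2}. The one subtlety to track carefully is how $w$ permutes the level-one vertices, so that the section maps of $G^{model}(f)$ and $G^{geom}(f)$ are compared along the matching subtrees; once this compatibility is made explicit, the transfer of both the branch decomposition and the index bounds is routine.
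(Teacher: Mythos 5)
You follow the paper's proof route exactly: by the proof of Theorem~\ref{main-theorem},\ref{item-MT-1} there exist a \ref{main-assumption}-restricted model group $G=G^{model}(f)$ and some $w\in W$ with $G^{geom}(f)=wGw^{-1}$, and both parts are obtained by transporting Theorem~\ref{thm-reg-branch},\ref{item2-branch} and \ref{item4-torsion} (i.e., Theorems~\ref{Thm: M_r is always regular branch} and~\ref{thm-torsion}) through this conjugacy. Your part~\ref{item-poly-torsion} is precisely the paper's one-line argument (torsion orders are invariant under conjugation) and is correct; note only that Proposition~\ref{prop: model-group-from-map} by itself gives elementwise $W$-conjugacy of the generators, and the conjugacy of the \emph{groups} comes from the odometer $g_\infty$ (Corollary~\ref{cor: IMG(poly) has odometer}) together with Theorem~\ref{thm-simultaneous-conjugation}, as in the proof of Theorem~\ref{main-theorem},\ref{item-MT-1}.

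The problem is with the one step you single out as the ``only genuine work,'' namely the conjugation-invariance of the regular branch property: the mechanism you describe is not correct as stated. Write $\phi=w(\cdot)w^{-1}$ and $w=(w_1,w_2,w_3)\sigma_w$. The automorphism $w$ carries the subtree rooted at a first-level vertex $x$ to the subtree rooted at $x\cdot w$, but the induced identification of these two subtrees is the \emph{section} $w|_x$, not $w$ itself. Concretely, by Lemma~\ref{lem: conj-action}, for $m=\Psi^{-1}(m_1,m_2,m_3)$ with $m_i\in\mathfrak C$ one has
\[
\phi(m)=\Psi^{-1}\big(w_1\,m_{1\cdot\sigma_w}\,w_1^{-1},\;w_2\,m_{2\cdot\sigma_w}\,w_2^{-1},\;w_3\,m_{3\cdot\sigma_w}\,w_3^{-1}\big),
\]
so $\phi$ carries $\Psi^{-1}(\mathfrak C\times\mathfrak C\times\mathfrak C)$ onto $\Psi^{-1}\big(w_1\mathfrak C w_1^{-1}\times w_2\mathfrak C w_2^{-1}\times w_3\mathfrak C w_3^{-1}\big)$, and \emph{not} onto $\Psi^{-1}\big(\phi(\mathfrak C)\times\phi(\mathfrak C)\times\phi(\mathfrak C)\big)$. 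Part~\ref{item-poly-branch} requires the latter containment in $\phi(\mathfrak C)$; unwinding it through $\phi^{-1}$, it reduces to showing that all tuples with first-level sections $c_x\,m\,c_x^{-1}$, where $m\in\mathfrak C$ and $c_x:=(w|_x)^{-1}w$, lie in $\mathfrak C$. This holds if each $c_x$ normalizes $\mathfrak C$ (for instance if $c_x\in G$, since $\mathfrak C=\overline{[G,G]}$ is a closed normal subgroup of $G$), but it is not a formal consequence of $G^{geom}(f)=wGw^{-1}$ for an arbitrary $w\in W$: if some $c_x m c_x^{-1}\notin G$, the corresponding tuple is not even in $G$ (an element of the self-similar group $G$ that fixes the first level has all its first-level sections in $G$), hence not in $\mathfrak C$, and the containment fails. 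In fairness, the paper's own proof of this corollary asserts the same transfer with no more justification than yours; but your declaration that the verification is ``routine'' via the intertwining you describe does not hold up, and closing the gap genuinely requires additional control on the conjugator, e.g., arranging the $w$ produced by Theorem~\ref{thm-simultaneous-conjugation} so that $(w|_x)^{-1}w$ normalizes $\mathfrak C$ for every first-level vertex $x$.
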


Finally, we study profinite geometric iterated monodromy groups associated to cubic PCF polynomials that satisfy Assumption~\ref{main-assumption} and whose finite critical points have disjoint orbits. In this case, we prove that, under certain divisibility conditions on the lengths of the periodic and pre-periodic parts of the postcritical orbits, such groups form filtrations in $\Aut(T)$.

\begin{thm}\label{thm-filtration}
Let $f,f'$ be two PCF polynomials over number fields,  each satisfying Assumption~\emph{\ref{main-assumption}} and having disjoint critical orbits. For $f$, let $s_i$ and $m_i$ denote, respectively, the lengths of the pre-periodic and periodic parts of the postcritical orbits of its two finite critical points of $f$, indexed by $i=1,2$; see Figure~\ref{fig:disjoint orbits} for reference. Define $s_i',m_i'$ for $f'$ similarly.\\
Suppose that for each $i \in \{1,2\}$ there exists some $j \in \{1,2\}$ such that one of the following holds:
\begin{itemize}
        \item if $s_i'=0$, then $m_i'$ divides both $s_j$ and $m_j$;
        \item if $s_i'\neq 0$, then $(s_i',m_i')=(s_j,m_j)$.
    \end{itemize}
Then there exists some $w\in W$ such that $wG^{geom}(f')w^{-1}\subseteq G^{geom}(f)$.
\end{thm}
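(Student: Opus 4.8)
The plan is to reduce everything to the canonical model groups and then build the required inclusion at the level of explicit wreath recursions. By Theorem~\ref{main-theorem}\ref{item-MT-2} and the construction of the \ref{main-assumption}-restricted model group (Proposition~\ref{prop: model-group-from-map}), there are $v,v'\in W$ with $G^{geom}(f)=v\,G^{model}(f)\,v^{-1}$ and $G^{geom}(f')=v'\,G^{model}(f')\,(v')^{-1}$. Hence it suffices to produce $u\in W$ with $u\,G^{model}(f')\,u^{-1}\subseteq G^{model}(f)$: taking $w=v\,u\,(v')^{-1}$ then yields $w\,G^{geom}(f')\,w^{-1}=v\,\big(u\,G^{model}(f')\,u^{-1}\big)\,v^{-1}\subseteq v\,G^{model}(f)\,v^{-1}=G^{geom}(f)$. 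From here I work entirely with the model groups, whose generators are given by explicit self-similar recursions depending only on the ramification portrait.

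Since the critical orbits are disjoint, the standard generating set splits as $\{g_\infty\}$ together with two families of generators, one for each finite critical orbit, and each such family has wreath recursions determined solely by the combinatorial type $(s_i,m_i)$ of its orbit. The generator $g_\infty$ is the ternary odometer $g_\infty=(1\,2\,3)(g_\infty,1,1)$ (up to conjugacy), since $f^{-1}(\infty)=\{\infty\}$ with local degree $3$ for every polynomial; thus $g_\infty$ is the \emph{same} element in $G^{model}(f)$ and $G^{model}(f')$ and causes no difficulty. The content of the theorem therefore lies in realizing, inside $G^{model}(f)$, the two orbit-families of generators of $G^{model}(f')$, guided by the matching $i\mapsto j$ furnished by the hypothesis.

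For the construction I would treat the two cases separately. When the $i$-th orbit of $f'$ is strictly preperiodic and $(s_i',m_i')=(s_j,m_j)$, the portraits of the corresponding generator families coincide, so after a relabelling automorphism of $T$ the $i$-th family of $f'$ is identified with the $j$-th family of $f$; this uses only that the model construction depends on the portrait up to relabelling. The substantive case is the purely periodic one, $s_i'=0$ with $m_i'\mid s_j$ and $m_i'\mid m_j$. Here I would realize the period-$m_i'$ generators of $f'$ as elements of $G^{model}(f)$ extracted from the type-$(s_j,m_j)$ family by a renormalization down the tree: because the model group is self-similar (closed under taking sections) and, by Theorem~\ref{thm-reg-branch}\ref{item2-branch}, regular branch over $\mathfrak C=\overline{[G,G]}$, it contains enough elements to realize any recursion consistent with its states. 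The two divisibility conditions are exactly what make the period-$m_i'$ branching pattern align with the $(s_j,m_j)$ pattern at every level: $m_i'\mid s_j$ aligns it along the preperiodic segment and at the entry into the cycle, while $m_i'\mid m_j$ aligns it around the cycle, so that the self-similar recursion defining the $f'$-generators closes up consistently inside $G^{model}(f)$. I would verify this alignment by induction on the levels of $T$, propagating the wreath recursions.

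Finally I would assemble a single conjugator $u\in W$. Since the two orbit-families do not act on disjoint subtrees, $u$ cannot be taken as a product of independent pieces; instead I would build it recursively, level by level, so that it simultaneously puts every generator of $G^{model}(f')$ into the form dictated by $G^{model}(f)$, and then check that the resulting infinite sequence of level-permutations is a well-defined element of $W$. I expect the main obstacle to be precisely this simultaneity together with the periodic case: one must show that a single relabelling realizes all matched families at once, and that the renormalized period-$m_i'$ generators genuinely lie in the topologically generated group $G^{model}(f)$ rather than merely in $W$ — which is where the self-similarity and the regular-branch structure of Theorem~\ref{thm-reg-branch} do the real work.
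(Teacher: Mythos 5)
Your reduction to model groups is the same first step the paper takes, and you have correctly isolated the substantive case ($s_i'=0$ with $m_i'\mid s_j$ and $m_i'\mid m_j$). But at exactly that point the proposal has a genuine gap: the assertion that self-similarity together with the regular-branch property of Theorem~\ref{thm-reg-branch} gives ``enough elements to realize any recursion consistent with its states'' is false, and no actual construction of the period-$m_i'$ generators inside $G^{model}(f)$ is offered. The paper's own Example~\ref{ex: filtrations-model-groups} exhibits the obstruction: with $a=(a,\one,\one)(1\,2)$ and $b=(\one,\one,b)(2\,3)$, the group $\llangle a,b\rrangle$ is a \ref{main-assumption}-restricted model group (hence self-similar, self-replicating, and regular branch over the closure of its commutator subgroup), yet the sign homomorphism $\Upsilon=(\Upsilon_0,\Upsilon_1)$ shows the period-$2$ generators $b_1,b_2$ do \emph{not} lie in $\llangle a,b\rrangle$. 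So containment of new recursively defined elements is a delicate arithmetic matter, and the divisibility hypotheses must enter through a concrete mechanism. The paper's mechanism (Proposition~\ref{Prop: main-filtration}) is to form, from the $(s_j,m_j)$-family $a_1,\dots,a_{s_j+m_j}$, the products $c_k:=a_k\,a_{k+r}\,a_{k+2r}\cdots a_{k+(s_j+m_j-r)}$ of every $r$-th generator, with $r=m_i'$; divisibility of both $s_j$ and $m_j$ by $r$ makes these products telescope, giving $c_1=(a_{s+m},\,c_r a_{s+m}^{-1},\,\one)(1\,2)$ and $c_k=(c_{k-1},\one,\one)$ for $k\neq 1$, after which a level-by-level induction, correcting by conjugators of the forms $(g,\one,*)$ and $(g,g,*)$ supplied by Lemma~\ref{lem: model-self-replicativity}, shows that the genuine period-$r$ generators lie in every $G_n$ and hence in $G$. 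Nothing in your proposal plays the role of the products $c_k$; ``renormalization down the tree'' produces no candidate elements on which your proposed induction could run.

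Your closing worry about assembling a single conjugator $u$ is real for your approach but is dissolved in the paper's: no conjugation at the model level is used at all. The mismatch of base transpositions --- a family of $f'$ carrying $(1\,2)$ matched to a family of $f$ carrying $(2\,3)$, and in particular a non-injective matching $i\mapsto j$ sending both orbits of $f'$ to the same orbit of $f$ --- is handled by Proposition~\ref{prop: moving generators}: each generator is \emph{multiplied} by an element of $[\Alt_3]^\infty\subseteq\overline{[G,G]}\subseteq G$ (Proposition~\ref{Prop: G is branch over [G,G]}), an internal operation converting the $(1\,2)$-form into the $(2\,3)$-form without disturbing the other family. The upshot is a literal inclusion $G^{model}(f')\subseteq G^{model}(f)$, and $w$ is simply the composition of the two conjugators furnished by Theorem~\ref{main-theorem}; a global relabelling automorphism, as you propose for the case $(s_i',m_i')=(s_j,m_j)$, could not achieve this simultaneously for both families. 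A further small inaccuracy: $g_\infty$ is not a generator of the model groups at all, since $G^{geom}(f)=\llangle g_p: p\in P(f)\setminus\{\infty\}\rrangle$ by Corollary~\ref{cor: cubic-model-generators}, so it requires no matching --- but it also cannot serve as a common anchor element of the two model groups as you suggest.
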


\subsection{Directions for future work}
Our results in Theorem~\ref{main-theorem}, \ref{item-MT-1}, lead to an interesting question about the invariable generation of profinite geometric iterated monodromy groups of PCF rational maps. It is straightforward to see that a finite invariable generating set (if it exists) of the profinite geometric iterated monodromy group associated to a PCF cubic polynomial with two distinct finite critical points must contain an element that acts transitively on the first level of the ternary rooted tree $T$; compare Example~\ref{ex-counterexample}. In particular, we can always use any odometer from the group (e.g., the standard generator corresponding to the point at $\infty$) as such an element; compare Theorem~\ref{main-theorem}, \ref{item-MT-1} and Theorem~\ref{thm-conjugates},\ref{invariable-generating-set}.
On the other hand, Ejder \cite{Ejder2024} showed that there exist quadratic rational maps whose profinite arithmetic iterated monodromy groups do not contain an odometer. Nevertheless, it follows from \cite[Lem.~1.3.2]{Pink2013} that the geometric iterated monodromy groups of PCF quadratic rational maps are finitely invariably generated.

\begin{prob}
{\rm
   Are the profinite arithmetic and geometric iterated monodromy groups associated to PCF rational maps in degree $d \geq 3$ always finitely invariably generated?
    }
\end{prob}

\begin{prob}
{\rm
    Let $f$ be a PCF rational map of degree $d \geq 2$. Does the ramification portrait of $f$ determine the associated profinite geometric iterated monodromy group (up to conjugation)?
}
\end{prob}

It would also be interesting to study further, more subtle, properties of conjugacy in profinite arithmetic and geometric iterated monodromy groups, such as the local-global conjugacy questions addressed in the work of Goksel \cite{Goksel2024} for quadratic polynomials.

\subsection{Organization of the paper}
 The rest of the paper is organized as follows. In Section~\ref{recursive}, we review the basic techniques and results about conjugacy in the automorphism group of the $d$-ary rooted tree,  which we extensively use further in the paper. In Section \ref{sec: descrete-img}, we review the theory of discrete iterated monodromy groups, and use it to construct standard topological generating sets for profinite geometric iterated monodromy groups. This leads to a definition of \ref{main-assumption}-restricted model groups in Section~\ref{Section:Model gps}, where we also study their basic properties. The main result of the paper, i.e., Theorem~\ref{main-theorem}, is proved in Section~\ref{sec: inv-generation}. We prove Theorem~\ref{thm-reg-branch} and Corollary~\ref{polynomial-branch} in Section~\ref{branch-torsion}, and Theorem~\ref{thm-filtration}~in Section \ref{section-filtration}.

\section{Actions on trees}\label{recursive}

In this section, we recall the necessary background on the automorphism groups of regular rooted trees. We refer the reader to \cite{Nekrashevych2005} for more on this subject.

\subsection{Tree of words and its automorphism group}\label{tree-of-words} Throughout this section, we fix an alphabet $\sX$ of $d\geq 2$ letters.   We denote by $\sX^*:=\bigsqcup_{n\geq 0} \sX^n$ the set of all finite
words in the alphabet $\sX$. As usual, given two words $v,u\in \sX^*$ we write $vu$ for the concatenation of $v$ and~$u$. The set $\sX^*$ has a natural structure
of a $d$-ary rooted tree, see Definition~\ref{defn-d-ary}: we define the root to be the empty word $\emptyset$ and connect every word $v\in \sX^n$ to all words of the form
$vx\in \sX^{n+1}$ for an arbitrary letter $x\in \sX$ and each $n\geq 0$. 
The set $\sX^*$ viewed as a rooted tree is called the \emph{tree of words} in the alphabet
$\sX$ and is denoted by $\sT_\sX$, or simply $\sT$ when the alphabet $\sX$ is clear from the context. In this paper, we often have $\sX = \{1,\ldots,d\}$, and in the proofs of our main results, we focus on the case $\sX = \{1,2,3\}$.

For $n\geq 0$, the \emph{$n$-th level} of the tree $\sT$ is given by all words in $\sX^n$. We set $\sT_n$ to be the finite subtree of $\sT$ composed of all vertices at levels $\leq n$. Moreover, given $v\in\sX^*$, we denote by $\sT_{(v)}$ the subtree of $\sT$ with the vertex set $(v):=\{vu: u\in \sX^*\}$, that is, $\sT_{(v)}$ is the subtree of $\sT$ rooted at $v$. Clearly, $\sT_{(v)}$ is isomorphic to the whole tree of words $\sT$ via the shift $\shift_v\colon  \sT_{(v)} \to \sT$ defined by $vu \mapsto u$ for $u\in \sX^*$.

We use the notation $\AT:=\Aut(\sT)$ for the automorphism group of the tree $\sT$, that is, the group of all bijective maps $g\colon \sT\to \sT$ that preserve the adjacency of the vertices of $\sT$. In this paper, we consider the \emph{right action} of $\AT$ on the tree $\sT$. So, the image of a vertex $v$ under the action of an element $g\in \AT$ is denoted by $v\cdot g$, and in the product $gh$ the element $g$ acts first. Similarly, for each $n\geq 0$, we denote by $\AT_n:=\Aut(\sT_n)$ the automorphism group of the finite tree $\sT_n$, and we consider its right action on $\sT_n$. We write $\Pi_n\colon \AT\to \AT_n$ for the natural projection map induced by restricting the action of $g\in \AT$ to $\sT_n$, i.e., $\Pi_n(g) = g|_{\sT_n}$. 

The automorphism group $\AT$ has a natural profinite structure
\[\AT=\varprojlim \AT_n,\]
that is, it is the inverse limit of discrete finite groups $\AT_n$ with respect to the canonical projection maps $\pi_{m,n}\colon \AT_m\to \AT_n$ (for $m\geq n\geq 0$) defined by $g\mapsto g|_{\sT_n}$. It is straightforward to check that the profinite topology on $\AT$ agrees with the uniform topology induced by the following metric on $\AT$: given two distinct elements $g,h\in \AT$, we set $\dist(g,h)=\frac{1}{2^n}$, where $n\geq 0$ is the largest level of $\sT$ on which $g$ and $h$ agree; that is, $g|_{\sT_n}=h|_{\sT_n}$ and $g|_{\sT_{n+1}} \neq h|_{\sT_{n+1}}$. In particular, $\AT$ is complete under the profinite (or uniform) topology.

\subsection{Wreath product structure of \texorpdfstring{$\Aut(\sT_{\sX})$}{Aut(T\_X)}}

The automorphism group $\AT=\Aut(\sT_\sX)$ is canonically isomorphic to the permutational wreath product $\AT\wr\operatorname{Sym}(\sX)$. To describe this isomorphism, we assume, for simplicity, that $\sX=\{1,\dots,d\}$, so that $\operatorname{Sym}(\sX)=\Sym_d$.

First, we recall that the permutational wreath product $\AT\wr\Sym_d$ is the semidirect product $\AT^d \rtimes \Sym_d$, where an element $\sigma\in\Sym_d$ acts on $\AT^d$ by permuting the factors according to its action on $\sX$, that is, $$(g_1, g_2, \dots, g_{d})\mapsto (g_{1\cdot \sigma}, g_{2\cdot \sigma}, \dots,g_{{d}\cdot \sigma}).$$
In particular, the multiplication rule for elements of $\AT\wr\Sym_d$ is given by the following formula:
\[
\displaystyle \left(g_1, g_2, \dots, g_d\right)\sigma_g \,\, \left(h_1, h_2, \dots, h_d\right)\sigma_h = \left( g_{1}\, h_{1\cdot {\sigma_g}}, g_{2}\, h_{2\cdot{\sigma_g}}, \dots, g_{d}\, h_{{d}\cdot{\sigma_g}}\right) \sigma_g \sigma_h.
\]

For every $g\in \AT$ and $v\in \sX^*$, we define an automorphism $g|_v\colon \sT\to \sT$, called the \emph{section} (or \emph{restriction}) of $g$ at the vertex $v$, by 
\[g|_v := \shift_{v\cdot{g}} \circ g \circ \shift^{-1}_{v}.\]
It is then immediate that the following identities hold for all $v,u \in \sX^*$ and $g,h\in \AT$:
\[g|_{vu} = (g|_{v})|_{u} \quad \text{ and } \quad  (gh)|_v = (g|_v)(h|_{v\cdot{g}}).\]
In particular, $g^{-1}|_x = (g|_{x \cdot g^{-1}})^{-1}$ for $g \in W$ and $x \in \sX$.
In a similar way, for each $g\in \AT_n$ and $0\leq k\leq n$ we introduce the sections $g|_v\in \AT_{n-k}$ of $g$ at the vertices $v\in \sX^k$.

Let us now consider the natural map
\begin{equation}\label{eq-can-isom}
    \Psi\colon \AT\to \AT\wr\Sym_d
\end{equation}
given by 
\begin{equation*}%\label{eq: can_isom_map}
    g\mapsto \left(g|_1,g|_2,\dots, g|_d\right) \, \sigma_g,
\end{equation*}
where $\sigma_g$ denotes the permutation in $\Sym_d$ providing the action of $g$ on the first level of $\sT$. It is straightforward to verify that $\Psi$ is an isomorphism. We identify an element $g\in \AT$ with its image $\Psi(g)\in \AT\wr\Sym_d$, called the \emph{wreath recursion} of $g$. 

Similarly, for every $n\geq 1$ we have that $\AT_n$ is canonically isomorphic to $\AT_{n-1}\wr \Sym_d$. Hence, $\AT_n$ is the iterated permutational wreath product $[\Sym_d]^n :=\Sym_d\wr\dots\wr\Sym_d$ with $n$ factors, and $\AT$ is the infinite iterated permutational wreath product
\[[\Sym_d]^\infty:=\dots\Sym_d\wr\Sym_d\wr\Sym_d.\] Finally, we note that the symmetric group $\Sym_d$ admits a natural embedding into $\AT$ (and each $\AT_n$ for $n\geq 1$) defined by
\[\sigma\mapsto (\one, \, \dots, \one)\sigma,\]
so we will frequently identify $\sigma\in \Sym_d$ with the respective element of $\AT$ (and $\AT_n$).

\subsection{Subgroup closures, self-similarity, branching, and odometers in \texorpdfstring{$\Aut(\sT_{\sX})$}{Aut(T\_X)}}\label{sec: subgroups-in-W} 

Let $G$ be a subgroup of $\AT=\Aut(\sT_\sX)$. We denote by $\overline{G}$ the closure of $G$ in $\AT$. Furthermore, for each $n\geq 0$ we denote by $G_n$ the following subgroup of $\AT_n$:
\[G_n:=\{g|_{\sT_n}:\, g\in G\}.\]
It is immediate that $\overline{G}$ is a profinite subgroup of $\AT$ with $\overline{G}=\varprojlim G_n$, where the inverse system is given by the natural projection maps $\pi_{n,k}\colon G_n\to G_k$ for $0\leq k\leq n$, which are the restrictions of the maps $\pi_{n,k}\colon \AT_n \to \AT_k$ considered in Section~\ref{tree-of-words}. We also recall the following standard result. 

\begin{lemma}\label{lem-same-closures}
    The closures $\overline{G}, \overline{H}$ of two subgroups $G,H\subset \AT$ coincide if and only if $G_n=H_n$ for all $n\geq 0$.
\end{lemma}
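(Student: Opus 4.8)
The plan is to reduce both implications to the inverse-limit description $\overline{G}=\varprojlim G_n$ recorded immediately before the statement. Realized inside $\AT=\varprojlim\AT_n$, this description says precisely that an automorphism lies in $\overline{G}$ exactly when all of its finite-level images do; that is,
\[
\overline{G}=\bigcap_{n\geq 0}\Pi_n^{-1}(G_n),
\]
and likewise for $H$. With this reformulation the backward implication becomes a substitution, while the forward implication rests on a single claim: that taking closures in $\AT$ does not enlarge the finite-level images, i.e. $\Pi_n(\overline{G})=G_n$ for every $n\geq 0$.

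First I would prove this claim. The key topological input is that each fibre of $\Pi_n$ is open. Indeed, $\AT_n=\Aut(\sT_n)$ is finite, hence discrete, and $\Pi_n\colon\AT\to\AT_n$ is continuous for the profinite topology; therefore $\Pi_n^{-1}(\{x\})$ is open for each $x\in\AT_n$. In particular $\Ker\Pi_n=\{g\in\AT: g|_{\sT_n}={\rm id}\}$ is an open subgroup, so every coset $h\,\Ker\Pi_n$ is open. Now take $h\in\overline{G}$. The set $h\,\Ker\Pi_n$ is an open neighbourhood of $h$, so it meets $G$; choosing $g\in G\cap h\,\Ker\Pi_n$ gives $\Pi_n(g)=\Pi_n(h)$, whence $\Pi_n(h)\in\Pi_n(G)=G_n$. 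This proves $\Pi_n(\overline{G})\subseteq G_n$, and the reverse inclusion is immediate from $G\subseteq\overline{G}$. Hence $\Pi_n(\overline{G})=G_n$ for all $n$.

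With the claim established, both directions are short. If $G_n=H_n$ for all $n$, then
\[
\overline{G}=\bigcap_{n\geq 0}\Pi_n^{-1}(G_n)=\bigcap_{n\geq 0}\Pi_n^{-1}(H_n)=\overline{H}.
\]
Conversely, if $\overline{G}=\overline{H}$, then applying $\Pi_n$ and invoking the claim for both subgroups yields $G_n=\Pi_n(\overline{G})=\Pi_n(\overline{H})=H_n$ for every $n$, as required.

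I do not expect a genuine obstacle here; the only point requiring care is the claim $\Pi_n(\overline{G})=G_n$, and specifically the observation that $\Ker\Pi_n$ is open (equivalently clopen, being a ball in the ultrametric of Section~\ref{tree-of-words}), so that the passage to closures is \emph{invisible} at each finite level $\sT_n$. Everything else is formal manipulation of the inverse-limit description already provided.
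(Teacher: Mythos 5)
Your proof is correct. The paper states this lemma as a standard fact and gives no proof of its own, so there is nothing to compare against; your argument is exactly the standard one. The only point worth flagging is that your backward implication leans on the inclusion $\bigcap_{n\geq 0}\Pi_n^{-1}(G_n)\subseteq\overline{G}$, which you obtain by reading the paper's asserted identity $\overline{G}=\varprojlim G_n$ as a statement about subsets of $\AT=\varprojlim \AT_n$; this is legitimate (and if one wanted full self-containment, it follows in one line from the ultrametric: given $h$ with $\Pi_n(h)\in G_n$ for all $n$, pick $g_n\in G$ with $g_n|_{\sT_n}=h|_{\sT_n}$, so $g_n\to h$). Your key claim $\Pi_n(\overline{G})=G_n$, proved via the openness of $\Ker\Pi_n$ and its cosets, is sound and makes the passage to closures invisible at each finite level, exactly as you say.
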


For $g_1,\dots, g_k\in W$, we denote by $\llangle g_1,\dots,g_k\rrangle$ the closure of the countable group $\langle g_1,\dots, g_k\rangle$ in $\AT$. Furthermore, we say that a subgroup $H\subset \AT$ is \emph{topologically generated by
$g_1,\dots, g_k$} if $H= \llangle g_1,\dots,g_k\rrangle$.

\begin{defn}\label{defn-self-similar-replicating}
    A subgroup $G\subset \AT$ is said to be \emph{self-similar} if for all $g\in G$ and $v\in \sX^*$ we have $g|_v\in G$. A self-similar group $G$ is called \emph{self-replicating} (or \emph{recurrent}) if it acts transitively on $\sX$ and for some (and thus for all) $x\in \sX$ the homomorphism \[\phi_x\colon \St_G(x) \to G, \quad g \mapsto g|_x,\] is surjective, where $\St_G(x):=\{g\in G: \, x\cdot g=x\}$ is the stabilizer of the vertex $x$.
\end{defn}

It follows immediately that $G$ is self-similar if and only if $g|_x\in G$ for every generator $g$ of $G$ and  $x\in \sX$. 
The following standard lemma allows one to define finitely generated self-similar subgroups of $\AT$ by providing a system of recursive formulas.

\begin{lemma}\label{EXDOC: lem: G is self-similar}
Suppose $\sX=\{1,\dots, d\}$, and consider the following system of $k\geq 1$ equations: 
\begin{align}\label{eq: rec-syst}
\begin{aligned}
g_1 &= \left( h_{1,1} \, ,\,  \dots \, , \, h_{1,d}\right) \sigma_1,\\
g_2 &= \left( h_{2,1} \, ,\,  \dots \, , \, h_{2,d}\right)  \sigma_2,\\
& \vdots \\
g_k &= \left( h_{k,1} \, ,\,  \dots \, , \, h_{k,d}\right) \sigma_k, \\
\end{aligned}
\end{align}
where each $h_{i,j}$ is a (non-empty) finite word in $\{g_1,\, g_1^{-1},\, \dots,\, g_k,\, g_k^{-1}\}\sqcup \AT$ and each $\sigma_i$ is a permutation in $\Sym_d$. Then the following statements are true:
\begin{enumerate}[label=(\roman*),font=\normalfont]
    \item\label{item: rec-syst-i} There are unique elements $g_1,\dots,g_k\in \AT$ that satisfy all the identities of \eqref{eq: rec-syst}.
    \item\label{item: rec-syst-ii} If each $h_{i,j}$ is a word in $\{\one, g_1,\, g_1^{-1},\, \dots,\, g_k,\, g_k^{-1}\}$, then the respective subgroups $G:=\langle g_1,\dots, g_k\rangle$ and $\overline{G}:=\llangle g_1,\dots,g_k\rrangle$ of $\AT$ are self-similar.
\end{enumerate}
\end{lemma}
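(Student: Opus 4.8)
The plan is to prove part \ref{item: rec-syst-i} by constructing $g_1,\dots,g_k$ level by level, exploiting that $\AT=\varprojlim \AT_n$ and that the wreath recursion expresses the action of $g_i$ on level $n$ entirely through the permutation $\sigma_i$ on the first level and the actions of the sections $h_{i,x}$ on level $n-1$. Since each $h_{i,x}$ is a word in the $g_j^{\pm 1}$ and in fixed elements of $\AT$, this is a genuinely level-decreasing recursion, and that is exactly what makes the construction well posed and forces uniqueness. Part \ref{item: rec-syst-ii} will then follow almost formally from the self-similarity criterion recorded after Definition~\ref{defn-self-similar-replicating}.

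Concretely, I would define approximations $g_i^{(n)}\in \AT_n$ by induction on $n$. Set $g_i^{(1)}:=\sigma_i\in\Sym_d=\AT_1$. Given $g_j^{(n-1)}$ for all $j$, evaluate each word $h_{i,x}$ inside $\AT_{n-1}$ — replacing every symbol $g_j^{\pm 1}$ by $(g_j^{(n-1)})^{\pm 1}$ and every $\AT$-letter $w$ by $\Pi_{n-1}(w)$ — to obtain $h_{i,x}^{(n-1)}\in \AT_{n-1}$, and define $g_i^{(n)}:=(h_{i,1}^{(n-1)},\dots,h_{i,d}^{(n-1)})\sigma_i$ under the identification $\AT_n\cong \AT_{n-1}\wr\Sym_d$. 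I would then check, by a parallel induction, the compatibility $\pi_{n,n-1}(g_i^{(n)})=g_i^{(n-1)}$; this reduces to $\pi_{n-1,n-2}(h_{i,x}^{(n-1)})=h_{i,x}^{(n-2)}$, which holds because $\pi_{n-1,n-2}$ is a homomorphism that commutes with word evaluation and, by the inductive hypothesis, sends $g_j^{(n-1)}\mapsto g_j^{(n-2)}$. The compatible sequences define $g_i:=\varprojlim_n g_i^{(n)}\in\AT$, and reading off $\Pi_1(g_i)=\sigma_i$ together with $g_i|_x=\lim_n h_{i,x}^{(n-1)}=h_{i,x}$ shows $g_i=(h_{i,1},\dots,h_{i,d})\sigma_i$, as required. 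For uniqueness, if $g_1',\dots,g_k'$ is a second solution, induction on $n$ using $\Pi_n(g_i)=(\Pi_{n-1}(h_{i,1}),\dots)\sigma_i$ (and likewise for $g_i'$) with $\Pi_{n-1}(g_j)=\Pi_{n-1}(g_j')$ gives $\Pi_n(g_i)=\Pi_n(g_i')$ for every $n$, whence $g_i=g_i'$ since $\AT=\varprojlim \AT_n$.

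For part \ref{item: rec-syst-ii} I would invoke the criterion following Definition~\ref{defn-self-similar-replicating}: a subgroup is self-similar if and only if $g|_x\in G$ for every generator $g$ and every $x\in\sX$. Under the hypothesis of (ii), $g_i|_x=h_{i,x}$ is a word in $\{\one,g_1^{\pm1},\dots,g_k^{\pm1}\}$, hence lies in $G=\langle g_1,\dots,g_k\rangle$, so $G$ is self-similar. For the closure $\overline{G}=\llangle g_1,\dots,g_k\rrangle$, I would use that for fixed $v$ the section map $\AT\to\AT$, $g\mapsto g|_v$, is continuous: if $g$ and $h$ agree up to level $|v|+m$, then $v\cdot g=v\cdot h$ and $g|_v,h|_v$ agree up to level $m$, so the map is Lipschitz with constant $2^{|v|}$. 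Since $g|_v\in G\subset\overline{G}$ for $g\in G$ and $\overline{G}$ is closed, continuity yields $g|_v\in\overline{G}$ for all $g\in\overline{G}$, so $\overline{G}$ is self-similar as well.

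The only genuine obstacle is making the level-decreasing structure precise and verifying the compatibility $\pi_{n,n-1}(g_i^{(n)})=g_i^{(n-1)}$: here one must be careful that evaluating the words $h_{i,x}$ commutes with the truncation homomorphisms, and that the level reduction in the recursion prevents any circular dependence of $g_i^{(n)}$ on level-$n$ data of the $g_j$. Once this bookkeeping is in place, both existence–uniqueness and self-similarity follow formally, with part \ref{item: rec-syst-ii} being essentially immediate from the criterion together with the continuity of sections.
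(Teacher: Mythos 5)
Your proposal is correct and takes essentially the same approach as the paper: part \ref{item: rec-syst-i} by induction on the level $n$ (the paper compresses your explicit inverse-limit bookkeeping with the approximations $g_i^{(n)}$ and the compatibility checks into a single sentence), and part \ref{item: rec-syst-ii} via the section criterion $g_i|_x = h_{i,x} \in G$ for the generators. For the closure $\overline{G}$, your continuity-of-the-section-map argument is just a repackaging of the paper's argument with convergent sequences $\{h_m\} \subset G$, so the two proofs coincide in substance.
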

\begin{proof}
    The proof is standard, and we leave some straightforward details to the reader. By induction on the level $n$ of the tree $\sT_\sX$, we deduce that System~\eqref{eq: rec-syst} defines uniquely the action of each $g_i$ on $\sT_n$, and thus \ref{item: rec-syst-i} follows. When every $h_{i,j}$ is a word in $\{\one, g_1,\, g_1^{-1},\, \dots,\, g_k,\, g_k^{-1}\}$, we also immediately have that $g_i|_x\in G$ for every generator $g_i$ and all $x\in \sX$. Finally,
    fix $g \in \overline{G}$ and let $\{h_m\} \subset G$ be a sequence converging to $g$, i.e., for every $n\geq 1$ there is $M_n \geq 1$ such that $h_m|_{\sT_n}  = g|_{\sT_n}$ for all $m \geq M_n$. It then follows that for each $x\in \sX$ the sequence of sections $\{h_m|_x\}$ converges to the section $g|_x$. Note that each $h_m|_x\in G$ due to the self-similarity of~$G$. Since $\overline{G}$ is closed, we conclude that $g|_x\in \overline{G}$, and thus $\overline{G}$ is self-similar as well.
\end{proof}

\begin{defn}\label{weakly-branch}
    A subgroup $G\subset \AT$ is called \emph{regular weakly branch on a subgroup $H\subset G$} if $H$ is non-trivial and $$\underbrace{H\times \dots \times  H}_{\text{$d$ factors}} \subset H;$$
if, in addition, the subgroup $H$ has finite index in $G$, we say that $G$ is \emph{regular branch on~$H$}. 
\end{defn}

Here and below, given $H, H_1, \dots, H_d \subset \AT$, the notation $H_1\times \dots \times H_d \subset H$ means that $H$ contains the preimage $\Psi^{-1}(H_1\times \dots \times H_d)$ under the canonical isomorphism \eqref{eq-can-isom}.

\begin{defn}\label{defn-odometer}
    An automorphism $g\in \AT$ is called an \emph{odometer} if $g$ acts transitively on each level of the tree $\sT_\sX$. 
\end{defn}
It follows immediately that $g$ is an odometer if and only if $g|_{\sT^n}$ has order $d^n$ for all $n\geq 0$, where $d=|\sX|$. The recursively defined element
\[
g = \left(\one, \dots, \one, g\right) (1\, 2\, \cdots \, d)\]
is an example of an odometer, called the \emph{standard odometer}.

\subsection{Conjugacy in \texorpdfstring{$\Aut(\sT_{\sX})$}{Aut(T\_X)}} In this subsection, we record some basic properties of conjugation in $\AT=\Aut(\sT_\sX)$ with $\sX=\{1,\dots, d\}$, which will be used in the rest of the paper. These properties build on the results previously established for the binary rooted tree in \cite{Pink2013}. Although a similar generalization has appeared in the recent work \cite{AdamsHyde}, we include it here for the sake of completeness. Also, note that \cite{AdamsHyde} uses the left action of $\Aut (\sT_\sX)$ on $\sT_\sX$, while we use the right action. First, we set up some notation.

Given $g,g'\in \AT$, we write $g\sim_\AT g'$ if $g$ and $g'$ are conjugate in $\AT$, that is, there exists $w\in \AT$ such that $g=w \,g'\, w^{-1}$. Similarly, if $G$ is a subgroup of $\AT$ and $g,g'\in G$, we write $g\sim_G g'$ if there exists $h\in G$ such that $g=h \,g'\, h^{-1}$. We will also use notations $\sim_{\AT_n}$ and $\sim_{G_n}$ to denote the conjugacy relations in $\AT_n$ and $G_n$, respectively, for each $n\geq0$.

Given a (possibly trivial) cycle $\tau=(x_1 \, \cdots \, x_r)\in \Sym_d$ with $r\geq 1$, we write $\supp(\tau)$ for the \emph{support of $\tau$}, that is, $\supp(\tau)= \{x_1,\dots, x_r\}$. We recall that any permutation $\sigma\in \Sym_d$ admits a \emph{disjoint cycle decomposition}, that is, $\sigma$ may be represented as the product 
$\sigma= \tau_1 \cdots \tau_s$ 
of cycles $\tau_j\in \Sym_d$ with pairwise disjoint supports whose union $\bigcup^s_{j=1} \supp(\tau_j)$ equals $\sX$. By an \emph{orbit transversal} for the action of $\sigma$ on $\sX=\{1,\dots,d\}$ we mean a subset of $\sX$ that contains precisely one element from the support of each cycle $\tau_j$. Note that the lengths of the cycles $\tau_j$ form an integer partition of $d$, called the \emph{cycle type} of $\sigma$. Furthermore, two permutations $\sigma, \sigma'\in \Sym_d$ are conjugate if and only if they have the same cycle type. Finally, we denote by $\Fix(\sigma)=\{x\in \sX:\, x\cdot \sigma = x\}$ the set of fixed points of $\sigma\in \Sym_d$.

Fix an arbitrary $g \in \AT$ and suppose $\left(g_1\,,\, \dots \, , \, g_d\right) \sigma_g$ is the associated wreath recursion. For each (possibly trivial) cycle $\tau = (x_1 \, \cdots \, x_r)$ in the disjoint cycle decomposition of $\sigma_g$, we denote by $g_\tau$ the product $g_{x_1}\cdots g_{x_r}=(g^r)|_{x_1}$ of sections of $g$ at the vertices in the support of $\tau$ taken in the order of the cycle $\tau$. We call $g_\tau$ a \emph{cyclic section product} of $g$. Clearly, the element $g_\tau$ is well defined only up to conjugacy: a different choice of the starting value $x_1$ in the cycle $\tau$ results in a conjugate product. Finally, for all $n\geq 1$ and $g=\left(g_1\,,\, \dots \, , \, g_d\right)\sigma_g\in \AT_n$, we define the element $g_\tau \in \AT_{n-1}$ for each cycle $\tau$ of $\sigma_g$ in an analogous way.

For the convenience of the reader, we record the following lemma describing the conjugacy action on wreath recursions. 
\begin{lemma}\label{lem: conj-action}
    Let $g=\left(g_1\,,\, \dots \, , \, g_d\right) \sigma_g$ and  $w=\left(w_1\,,\, \dots \, , \, w_d\right) \sigma_w$ be arbitrary elements in $\AT$. Then for the conjugate $g' := wgw^{-1}=\left(g'_1\,,\, \dots \, , \, g'_d\right) \sigma_{g'}$ we have
    \[\sigma_{g'}= \sigma_w\sigma_g \sigma_w^{-1} \quad \text{and} \quad g'_x=w_x\,g_{x\cdot \sigma_w}\,w^{-1}_{x\cdot \sigma_{g'}} \quad \text{for each $x\in \sX$.}\]
    Moreover, the analogous statement holds for all $n\geq 1$ and $g,w\in \AT_n$. 
\end{lemma}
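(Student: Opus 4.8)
The plan is to verify both identities by applying the multiplication rule for the wreath product $W\wr\Sym_d$ recorded above, used twice in succession. The only preliminary ingredient is the wreath recursion of the inverse $w^{-1}$. Writing $w^{-1}=(w'_1,\dots,w'_d)\,\tau$ and imposing $ww^{-1}=\one$ through the multiplication rule, one reads off $\tau=\sigma_w^{-1}$ together with $w_x\,w'_{x\cdot\sigma_w}=\one$ for every $x\in\sX$; substituting $x=y\cdot\sigma_w^{-1}$ gives $w'_y=(w_{y\cdot\sigma_w^{-1}})^{-1}$. (This is just the identity $w^{-1}|_x=(w|_{x\cdot w^{-1}})^{-1}$ recorded above, specialized to the first level.) I emphasize here the notational point that the symbol $w^{-1}_j$ appearing in the statement denotes $(w_j)^{-1}$, the inverse of the $j$-th section of $w$, which is in general \emph{different} from $w'_j$, the $j$-th section of $w^{-1}$.

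First I would compute $wg$. The multiplication rule yields $wg=(w_1 g_{1\cdot\sigma_w},\dots,w_d g_{d\cdot\sigma_w})\,\sigma_w\sigma_g$, so its permutation part is $\sigma_{wg}=\sigma_w\sigma_g$ and its $x$-th section is $w_x g_{x\cdot\sigma_w}$. Multiplying on the right by $w^{-1}$ and applying the rule again, now with leading permutation $\sigma_w\sigma_g$, immediately gives $\sigma_{g'}=\sigma_w\sigma_g\sigma_w^{-1}$, which is the first asserted identity, together with
\[g'_x=(w_x\,g_{x\cdot\sigma_w})\,w'_{x\cdot\sigma_w\sigma_g}.\]
Substituting the inverse formula, $w'_{x\cdot\sigma_w\sigma_g}=(w_{(x\cdot\sigma_w\sigma_g)\cdot\sigma_w^{-1}})^{-1}=(w_{x\cdot\sigma_w\sigma_g\sigma_w^{-1}})^{-1}=(w_{x\cdot\sigma_{g'}})^{-1}=w^{-1}_{x\cdot\sigma_{g'}}$, and collecting terms yields $g'_x=w_x\,g_{x\cdot\sigma_w}\,w^{-1}_{x\cdot\sigma_{g'}}$, as claimed.

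For the finite-level statement, the identical computation applies verbatim once $g,w\in\AT_n$ are expressed through the canonical isomorphism $\AT_n\cong\AT_{n-1}\wr\Sym_d$, since both the multiplication rule and the inverse formula take exactly the same shape there; alternatively, it follows from the infinite case by restriction via $\Pi_n$. There is no genuine obstacle here, as the statement is a bookkeeping identity; the only place demanding care is the right-action convention — indices are permuted on the right, so it is $g_{x\cdot\sigma_w}$ rather than $g_{\sigma_w\cdot x}$ — and, in the second multiplication, using the correct leading permutation $\sigma_w\sigma_g$ when computing the index $x\cdot\sigma_w\sigma_g$ at which the section $w'$ is evaluated. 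Tracking this index through the final simplification, and not conflating $w'_j$ with $(w_j)^{-1}$, is the single step where an order slip could occur.
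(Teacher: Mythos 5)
Your proof is correct: the two applications of the wreath-product multiplication rule, together with the first-level inverse formula $w^{-1}|_x=(w|_{x\cdot\sigma_w^{-1}})^{-1}$, yield exactly the stated identities, and your index bookkeeping under the right-action convention is accurate throughout. The paper states this lemma without proof as a standard computation, and your argument is precisely the direct verification it implicitly relies on.
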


\begin{cor}\label{cor: special-element}
    Let $G\subset \AT$ be a subgroup with $G_1 = \Sym_d$. Then, for any $g=\left(g_1\,,\, \dots \, , \, g_d\right) \sigma_g \in G$ with trivial $\sigma_g$, and any $\mu\in {\mathcal S}_d$, there exists some $h=\left(h_1\,,\, \dots \, , \, h_d\right) \sigma_h  \in G$ with $\sigma_h = \mu$ such that 
    \[(hgh^{-1})|_{x}  =(h_x\,g_{x\cdot \mu}\,h_x^{-1})\quad \text{for each $x \in \sX$}.\] In particular, if $g_x=\one$ for some $x\in \sX$, then $(hgh^{-1})|_{x\cdot\mu^{-1}}=\one$.
    Moreover, the analogous statement holds for all $n\geq 1$ and $g\in G_n$.
\end{cor}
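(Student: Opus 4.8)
The plan is to reduce the statement directly to Lemma~\ref{lem: conj-action}, using the hypothesis $G_1 = \Sym_d$ only to produce an element of $G$ with the prescribed first-level permutation; everything else is then forced. First I would note that since $G_1 = \Sym_d$, the first-level projection $\Pi_1\colon G \to \Sym_d$, $h\mapsto \sigma_h$, is surjective. Hence for the given $\mu \in \Sym_d$ there exists some $h = \left(h_1,\dots,h_d\right)\sigma_h \in G$ with $\sigma_h = \mu$. Its sections $h_x \in \AT$ are otherwise arbitrary and play no role beyond appearing in the final formula, so we simply fix one such $h$.

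Next I would apply Lemma~\ref{lem: conj-action} to $g$ and $w := h$. Writing $g' := hgh^{-1} = \left(g'_1,\dots,g'_d\right)\sigma_{g'}$, the lemma gives $\sigma_{g'} = \sigma_h\,\sigma_g\,\sigma_h^{-1}$ and $g'_x = h_x\,g_{x\cdot\sigma_h}\,h^{-1}_{x\cdot\sigma_{g'}}$ for each $x\in\sX$. Since $\sigma_g = \one$ by hypothesis, the first identity yields $\sigma_{g'} = \one$, so $x\cdot\sigma_{g'} = x$ for every $x$. Substituting this together with $\sigma_h = \mu$ into the second identity gives exactly $(hgh^{-1})|_x = h_x\,g_{x\cdot\mu}\,h_x^{-1}$, as claimed. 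For the ``in particular'' clause I would set $y := x\cdot\mu^{-1}$ whenever $g_x = \one$; then $y\cdot\mu = x$, so the displayed formula gives $(hgh^{-1})|_{x\cdot\mu^{-1}} = h_y\,g_x\,h_y^{-1} = \one$. The one point requiring care is keeping the right-action convention straight in the index $x\cdot\sigma$, which is precisely what sends the trivial section to position $x\cdot\mu^{-1}$ rather than $x\cdot\mu$.

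Finally, for the ``moreover'' clause over $\AT_n$ and $G_n$, the same argument applies verbatim. One first checks that $(G_n)_1 = G_1 = \Sym_d$, since restricting an element of $G$ to level $n$ and then to level $1$ coincides with restricting it directly to level $1$ (for $n\geq 1$); thus an element $h\in G_n$ with $\sigma_h = \mu$ exists. Lemma~\ref{lem: conj-action} holds for $\AT_n$ by its own ``moreover'' statement, and the computation above goes through unchanged.

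I do not expect any genuine obstacle here: the entire content is the translation of Lemma~\ref{lem: conj-action} under the simplifying assumption $\sigma_g = \one$ (which collapses $\sigma_{g'}$ to the identity and thereby removes the $w^{-1}_{x\cdot\sigma_{g'}}$ dependence), combined with the surjectivity of the first-level projection supplied by $G_1 = \Sym_d$. The only bookkeeping to watch is the right-action indexing in the ``in particular'' clause, as noted above.
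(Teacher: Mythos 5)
Your proof is correct and follows exactly the route the paper intends: the corollary is stated without proof as an immediate consequence of Lemma~\ref{lem: conj-action}, and your argument—choosing $h\in G$ with $\sigma_h=\mu$ via surjectivity of the first-level projection, then specializing the conjugation formula using $\sigma_g=\one$ so that $\sigma_{hgh^{-1}}=\one$—is precisely that deduction, including the correct right-action bookkeeping placing the trivial section at $x\cdot\mu^{-1}$. Your verification that the restriction of $G_n$ to the first level is still $\Sym_d$ handles the ``moreover'' clause correctly as well.
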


The following technical result allows us to choose a representative $h$ of the conjugacy class of a given automorphism $g\in\AT$ so that the wreath recursion of $h$ has a convenient simple form. In particular, this representative $h$ has the maximal possible number of trivial sections on the first level; see Remark~\ref{rem: conjugacy-in-W} below.

\begin{lemma}\label{lem: conjugacy-representative}
    Let $g=\left(g_1\,,\, \dots \, , \, g_d\right) \sigma_g$ be an arbitrary element in $\AT$, and suppose $\sigma_g=\tau_1\cdots \tau_s$ is a disjoint cycle decomposition of $\sigma_g$. For each cycle $\tau_j$, choose a letter $x_j\in \supp(\tau_j)$. 

    Consider the element $h\in \AT$ defined by the wreath recursion $h=\left(h_1\,,\, \dots \, , \, h_d\right) \sigma_h$, where 
    \[\text{$\sigma_h= \sigma_g$ \quad and \quad  
    $h_x= \begin{cases} g_{\tau_j}, \quad &\text{if $x=x_j$ for some cycle $\tau_j$}\\ \one, \quad &\text{otherwise}.\end{cases}$}\]
    Then the elements $g$ and $h$ are conjugate in $\AT$.
\end{lemma}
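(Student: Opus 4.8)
The plan is to exhibit an explicit conjugator $w\in\AT$ with $wgw^{-1}=h$. Since the prescribed $h$ satisfies $\sigma_h=\sigma_g$, it is natural to look for $w$ whose first-level permutation is trivial, i.e. $w=(w_1,\dots,w_d)\,\one$. By Lemma~\ref{lem: conj-action}, conjugating $g$ by such a $w$ leaves the top permutation unchanged, $\sigma_{wgw^{-1}}=\sigma_g=\sigma_h$, and transforms the sections by $(wgw^{-1})|_x=w_x\,g_x\,w_{x\cdot\sigma_g}^{-1}$ for each $x\in\sX$. Thus the lemma reduces to solving, over $\AT$, the system of equations $w_x\,g_x\,w_{x\cdot\sigma_g}^{-1}=h_x$ for $x\in\sX$, with unknown sections $w_x\in\AT$.

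These equations decouple across the cycles of $\sigma_g$, so I would solve them one cycle at a time. Fix a cycle $\tau_j=(y_1\,\cdots\,y_r)$ of $\sigma_g$, written so that $y_1=x_j$ is the chosen letter; then $y_i\cdot\sigma_g=y_{i+1}$ with indices read modulo $r$. On this cycle the target values are $h_{y_1}=g_{\tau_j}=g_{y_1}g_{y_2}\cdots g_{y_r}$ and $h_{y_i}=\one$ for $2\le i\le r$. Setting $w_{y_1}=\one$ and $w_{y_i}=(g_{y_i}g_{y_{i+1}}\cdots g_{y_r})^{-1}$ for $2\le i\le r$ solves the cycle's equations: a short telescoping computation gives $w_{y_i}g_{y_i}w_{y_{i+1}}^{-1}=\one$ for $2\le i\le r$ (including $i=r$, where $w_{y_{r+1}}=w_{y_1}=\one$), while for the marked letter $w_{y_1}g_{y_1}w_{y_2}^{-1}=g_{y_1}g_{y_2}\cdots g_{y_r}=g_{\tau_j}$. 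For a trivial cycle ($r=1$) one simply takes $w_{y_1}=\one$ and reads off $h_{y_1}=g_{y_1}=g_{\tau_j}$. Assembling the $w_x$ obtained from all cycles into $w=(w_1,\dots,w_d)\,\one\in\AT$ then yields $wgw^{-1}=h$, whence $g\sim_\AT h$.

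I expect no serious obstacle here; the only point requiring care is the consistency of the equations around each cycle. Multiplying the relations $w_{y_i}g_{y_i}w_{y_{i+1}}^{-1}=h_{y_i}$ around the full cycle collapses them to a single closing-up constraint relating $w_{y_1}$ to itself, and this constraint is satisfied precisely because the unique nontrivial target value $h_{y_1}$ was chosen to be the cyclic section product $g_{\tau_j}$, which absorbs exactly the holonomy accumulated along the cycle. The freedom in the construction (the choice of starting letter $x_j$ and the normalization $w_{y_1}=\one$) reflects that $g_{\tau_j}$ is only well defined up to conjugacy, consistent with the remark preceding the statement. All computations take place in $\AT$, where sections may be prescribed freely, so no membership constraints arise; the analogous statement for $\AT_n$ follows verbatim by restricting the same construction to the first $n$ levels.
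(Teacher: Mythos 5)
Your proof is correct and follows essentially the same route as the paper's: an explicit conjugator with trivial first-level permutation whose sections are the suffix products $g_{y_i}\cdots g_{y_r}$ (up to inverses, i.e., the side of conjugation) along each cycle. The only cosmetic difference is that you solve all cycles simultaneously, where the paper treats a single non-trivial cycle and then inducts on the number of cycles; your telescoping verification, including the closing-up constraint at $i=r$, checks out.
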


In other words, given an arbitrary $g\in \AT$ and an orbit transversal $\{x_1,\dots, x_s\}$  for the action of $\sigma_g$ on $\sX$, we may find a conjugate automorphism $h\sim_\AT g$ such that all sections $h|_x$ are trivial for all $x\in \sX\setminus \{x_1,\dots, x_s\}$.

\begin{proof}
    The proof is by a direct computation. When $\sigma_g$ has a single non-trivial cycle in its disjoint cycle decomposition, we may assume that $g$ has the form 
    $$g=\left(g_1,\, \dots\, , \, g_d\right)(1\,2\, \cdots\,r)$$ for some $g_1,\dots,g_d\in \AT$ and $2\leq r\leq d$. We then conjugate $g$ by 
    $$w:=\left(\one\, ,\, g_2\cdots g_r\, ,\, g_3\cdots g_r\, ,\, \dots \, , \, g_r\, , \, \one \, , \one \, , \dots \, , \, \one\right) \in W$$
    to obtain that 
    $$g\, \sim_\AT \, \left(g_1\cdots g_r\, ,\, \one\, ,\, \dots\, ,\, \one\, ,\, g_{r+1}\, ,\, g_{r+2}\, ,\, \dots\, ,\, g_d\right)(1\,2\,\cdots\,r).$$
    The general case follows by induction on the number of non-trivial cycles in the disjoint cycle decomposition of $\sigma_g$.
\end{proof}

\subsubsection{Conjugacy criteria}  We now provide several criteria for conjugacy in $\AT=\Aut(\sT_\sX)$, where $\sX = \{1,\ldots,d\}$. The lemma below is well-known.

\begin{lemma} \label{lem: conjugacy-works-levelwise}
    Let $g,g'\in \AT$. Then $g\sim_\AT g'$ if and only if $g|_{\sT_n}\sim_{\AT_n} g'|_{\sT_n}$ for each $n\geq 0$.
\end{lemma}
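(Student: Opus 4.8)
The plan is to prove both implications separately, with the forward direction being immediate and the converse resting on a compactness argument that exploits the profinite structure $\AT=\varprojlim \AT_n$.

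For the forward implication, suppose $g\sim_\AT g'$, say $g = w\,g'\,w^{-1}$ for some $w\in \AT$. Since each projection $\Pi_n\colon \AT\to \AT_n$ is a group homomorphism, applying $\Pi_n$ yields $g|_{\sT_n} = (w|_{\sT_n})\,(g'|_{\sT_n})\,(w|_{\sT_n})^{-1}$, so $g|_{\sT_n}\sim_{\AT_n} g'|_{\sT_n}$ for every $n\geq 0$. This direction requires no further work.

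For the converse, I would encode the hypothesis as the non-emptiness of an inverse limit of conjugator sets. For each $n\geq 0$, set
\[ C_n := \{\, w\in \AT_n : w\,(g'|_{\sT_n})\,w^{-1} = g|_{\sT_n}\,\}. \]
By assumption each $C_n$ is non-empty, and it is finite because $\AT_n$ is a finite group (indeed $C_n$ is a coset of the centralizer of $g'|_{\sT_n}$ in $\AT_n$). The key structural observation is that the canonical projections $\pi_{m,n}\colon \AT_m\to \AT_n$ (for $m\geq n$) restrict to maps $C_m\to C_n$: if $w\in C_m$ conjugates $g'|_{\sT_m}$ to $g|_{\sT_m}$, then restricting the conjugacy identity to $\sT_n$ shows $\pi_{m,n}(w)=w|_{\sT_n}\in C_n$. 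Thus $\{C_n,\pi_{m,n}\}$ is an inverse system of non-empty finite sets indexed by the directed set $\N$.

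The main (and essentially only) obstacle is to guarantee that $\varprojlim C_n\neq\emptyset$. This is precisely the standard compactness fact that an inverse limit of non-empty finite sets over a directed index set is non-empty: regarding $\prod_n C_n$ as a compact space (a product of finite discrete sets), the inverse limit is the intersection of a nested family of non-empty closed subsets, hence non-empty. Any element $(w_n)_{n\geq 0}\in\varprojlim C_n$ then determines an automorphism $w\in \AT=\varprojlim \AT_n$ with $w|_{\sT_n}\,(g'|_{\sT_n})\,(w|_{\sT_n})^{-1}=g|_{\sT_n}$ for all $n$; since $\AT=\varprojlim \AT_n$, equality at every level forces $w\,g'\,w^{-1}=g$ in $\AT$, giving $g\sim_\AT g'$. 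Apart from this compactness input, every step is a routine consequence of the profinite description of $\AT$ and the homomorphism property of the level projections.
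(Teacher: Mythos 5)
Your proof is correct: the forward direction is immediate since each $\Pi_n\colon \AT\to \AT_n$ is a homomorphism, and your converse — encoding the hypothesis as an inverse system of non-empty finite conjugator sets $C_n$ (each a coset of the centralizer of $g'|_{\sT_n}$ in $\AT_n$), checking compatibility under the projections $\pi_{m,n}$, and invoking non-emptiness of the inverse limit by compactness — is sound in every step. The paper states this lemma without proof, calling it well-known, and your inverse-limit argument is precisely the standard justification that remark refers to.
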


In other words, two automorphisms $g,g'\in \AT$ are conjugate in $\AT$ if and only if their restrictions up to each finite level $n$ of the tree $\sT_\sX$ are conjugate in $\AT_n$.

The next statement provides a very useful criterion for conjugacy of two elements in $\AT$ in terms of their wreath recursions.  
 
\begin{prop} \label{prop: conjugacy-in-W}
    Let $g,g'\in \AT$, and let $\sigma_g,\sigma_{g'}\in \Sym_d$ be their respective actions on $\sX$. Then the following are equivalent:
    \begin{enumerate}[label=(\roman*),font=\normalfont]
    \item\label{item: conjugacy-i} $g$ and $g'$ are conjugate in $\AT$;
    \item\label{item: conjugacy-ii} 
    There exists some $\mu\in \Sym_d$ with $\sigma_g=\mu \sigma_{g'}\mu^{-1}$ such that, for each cycle $\tau$ in the disjoint cycle decomposition of $\sigma_g$, we have $g_\tau\sim_\AT g'_{\mu^{-1}\tau\mu}$. In particular, for each $x \in {\rm Fix}(\sigma_g)$ we have $g|_x \sim_\AT g'|_{x \cdot \mu}$.
    \end{enumerate}
    Moreover, the analogous statement holds for all $n\geq 1$ and $g,g'\in \AT_n$. 
\end{prop}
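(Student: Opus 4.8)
The plan is to prove Proposition~\ref{prop: conjugacy-in-W} by reducing it to the level-wise conjugacy criterion of Lemma~\ref{lem: conjugacy-works-levelwise} and then establishing the wreath-recursion characterization inductively. First, I would observe that the ``moreover'' clause (the finite-level statement for $\AT_n$) is really the engine: since $\AT = \varprojlim \AT_n$ and conjugacy in $\AT$ is detected level-by-level by Lemma~\ref{lem: conjugacy-works-levelwise}, once I have the equivalence for each $\AT_n$ I can pass to the limit. So the main work is to prove \ref{item: conjugacy-i}$\Leftrightarrow$\ref{item: conjugacy-ii} for $g, g' \in \AT_n$ by induction on $n$, with the base case $n=1$ being simply the classical fact that two permutations in $\Sym_d$ are conjugate if and only if they have the same cycle type.

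For the direction \ref{item: conjugacy-i}$\Rightarrow$\ref{item: conjugacy-ii}, I would suppose $g = w g' w^{-1}$ with $w = (w_1, \dots, w_d)\sigma_w$ and apply Lemma~\ref{lem: conj-action} directly. That lemma gives $\sigma_g = \sigma_w \sigma_{g'} \sigma_w^{-1}$, so I may take $\mu := \sigma_w$, which immediately yields the cycle-type compatibility. The heart of this direction is to track what conjugation does to a cyclic section product: if $\tau = (x_1 \cdots x_r)$ is a cycle of $\sigma_g$, then $\mu^{-1}\tau\mu$ is the corresponding cycle of $\sigma_{g'}$, and using the section formula $g'_x = w_x\, g_{x\cdot\sigma_w}\, w^{-1}_{x\cdot\sigma_{g'}}$ (applied in reverse, to express $g_\tau$ in terms of the sections of $g'$ along the matched cycle) one computes that $g_\tau = w_{x_1}\, g'_{\mu^{-1}\tau\mu}\, w_{x_1}^{-1}$, i.e.\ $g_\tau \sim_\AT g'_{\mu^{-1}\tau\mu}$. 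This is the telescoping calculation where the $w$-factors cancel around the cycle, leaving a single conjugation by $w_{x_1}$; the fixed-point case is just the length-one specialization. For the converse \ref{item: conjugacy-ii}$\Rightarrow$\ref{item: conjugacy-i}, I would construct a conjugating element explicitly. Using Lemma~\ref{lem: conjugacy-representative} I may first replace $g$ and $g'$ by conjugates whose wreath recursions have the normalized form with all sections trivial except the cyclic section products $g_\tau$ sitting at chosen transversal letters. Then, for each matched pair of cycles, the inductive hypothesis provides elements conjugating $g_\tau$ to $g'_{\mu^{-1}\tau\mu}$ at the next level down; assembling these into the $w_x$-entries, and taking $\sigma_w = \mu$, produces by Lemma~\ref{lem: conj-action} a $w$ with $wg'w^{-1} = g$.

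I expect the main obstacle to be the bookkeeping in assembling the conjugator $w$ in the \ref{item: conjugacy-ii}$\Rightarrow$\ref{item: conjugacy-i} direction: one must distribute the conjugating sections correctly around each cycle so that the telescoping product of $w$-entries reproduces exactly the prescribed section $g_x$ at each vertex, not merely a conjugate of it. The normalization via Lemma~\ref{lem: conjugacy-representative} is what makes this tractable, since it reduces the entries that must be matched to a single nontrivial product per cycle; the remaining entries of $w$ along each cycle are then determined by a partial-product (telescoping) recipe analogous to the conjugating element $w$ exhibited in the proof of Lemma~\ref{lem: conjugacy-representative}. A secondary technical point is ensuring that the inductive step is applied at the correct level: the $g_\tau$ are elements of $\AT_{n-1}$, so the inductive hypothesis for $\AT_{n-1}$ supplies the level-$(n-1)$ conjugators, which then become the sections $w_x \in \AT_{n-1}$ of the level-$n$ conjugator. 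The passage to the infinite tree at the end is then routine via Lemma~\ref{lem: conjugacy-works-levelwise}.
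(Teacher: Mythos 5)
Your proof is correct, and its two computational cores are exactly the paper's: the telescoping of the $w$-factors around each cycle via Lemma~\ref{lem: conj-action} for \ref{item: conjugacy-i}$\Rightarrow$\ref{item: conjugacy-ii}, and, for the converse, the normalization by Lemma~\ref{lem: conjugacy-representative} followed by assembling a conjugator $w$ with $\sigma_w=\mu$. Where you differ is the scaffolding: the paper argues directly in $\AT$, with no induction on levels and no appeal to Lemma~\ref{lem: conjugacy-works-levelwise}, because hypothesis \ref{item: conjugacy-ii} already hands you conjugators $u_j\in\AT$ with $g_{\tau_j}=u_j\,g'_{\mu^{-1}\tau_j\mu}\,u_j^{-1}$ --- there is nothing for an inductive hypothesis to supply, and indeed in your own inductive step the conjugators come from the level-$n$ hypothesis \ref{item: conjugacy-ii}, not from the statement one level down, so the induction is vestigial. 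Note also that after the normalization of Lemma~\ref{lem: conjugacy-representative} the sections of $w$ are simply \emph{constant}, equal to $u_j$ on all of $\supp(\tau_j)$ (so that the entries telescope to $u_j\,g'_{\mu^{-1}\tau_j\mu}\,u_j^{-1}$ at the transversal letter and to $u_j\one u_j^{-1}=\one$ elsewhere); the partial-product recipe you invoke lives inside the proof of Lemma~\ref{lem: conjugacy-representative} itself, not in the assembly of $w$. Finally, one caution: if you genuinely route \ref{item: conjugacy-i}$\Rightarrow$\ref{item: conjugacy-ii} through finite levels as announced, the ``routine'' passage to $\AT$ has content you did not spell out --- the level-$n$ data produce permutations $\mu_n$ that may vary with $n$, so you must first extract a single $\mu$ valid for all levels (pigeonhole over the finite set $\Sym_d$, using that level-$n$ data restrict to level-$m$ data for $m\leq n$) and then apply Lemma~\ref{lem: conjugacy-works-levelwise} to the cyclic section products $g_\tau$ and $g'_{\mu^{-1}\tau\mu}$. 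Your direct telescoping computation with $w\in\AT$ already proves this direction in $\AT$ verbatim, which is a further reason to drop the level-wise detour; kept as stated, the proposal's limit step is a small but genuine gap, though easily repaired.
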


In other words, two automorphisms $g,g'$ are conjugate in $\AT$ if and only if there exists a one-to-one correspondence between the cycles from the disjoint cycle decompositions of $\sigma_g$ and $\sigma_{g'}$ that respects both the cycle lengths and the cyclic section products up to conjugacy. Before we prove Proposition~\ref{prop: conjugacy-in-W}, we discuss an example and make some observations that illustrate the subtlety of the problem.

\begin{ex}
    Let $d=7$, and consider an element $g\in \AT$ of the form $$g=\left(g_1\, , \, g_2 \, , \, g_3 \, , \, g_4 \, , \, g_5 \, , \, g_6 \, , \, g_7\right)(3\, 5 \, 1)(6\, 2)(4\, 7).$$ 
    Suppose that $g'\in \AT$ with 
    $$g'=\left(g'_1\, , \, g'_2 \, , \, g'_3 \, , \, g'_4 \, , \, g'_5 \, , \, g'_6 \, , \, g'_7\right) \sigma_{g'}$$
    is conjugate to $g$ in $\AT$. Then there exists a partition
    $$\{x_1,x_2,x_3\} \sqcup \{x_4,x_5\}\sqcup\{x_6,x_7\}$$
    of $\sX$ such that 
    \begin{equation}\label{eq-ex-conj-1}
    \sigma_{g'}=(x_1\, x_2 \, x_3)(x_4\, x_5)(x_6\, x_7)
    \end{equation}
    and such that all of the following conjugacy relations hold:
    \begin{align}\label{eq-ex-conj-2}
    \begin{aligned}
        g_3g_5g_1&\sim_\AT g'_{x_1}g'_{x_2}g'_{x_3},\\
        g_6g_2&\sim_\AT g'_{x_4}g'_{x_5},\\
        g_4g_7&\sim_\AT g'_{x_6}g'_{x_7}.
    \end{aligned}
    \end{align}
    Conversely, if the wreath recursion of $g'$ satisfies \eqref{eq-ex-conj-1} and \eqref{eq-ex-conj-2} for some partition of $\sX$ as above, then we have $g\sim_\AT g'$. 
\end{ex}

\begin{warn}
    It is important to keep in mind that even when the first-level actions of $g, g' \in \AT$ coincide and the respective sections of $g$ and $g'$ are pairwise conjugate (i.e., $g|_x\sim_\AT g'|_x$ for all $x\in \sX$), we do not necessarily have that $g\sim_\AT g'$. 
\end{warn}

\begin{rmk}\label{rem: conjugacy-in-W}
Suppose we are given two conjugate automorphisms $g,g'\in \AT$ with $\mu\in \Sym_d$ as in item \ref{item: conjugacy-ii} of Proposition~\ref{prop: conjugacy-in-W}. The proposition implies that for every cycle $\tau$ in the disjoint cycle decomposition of $\sigma_g$, either $g_\tau = \one$ or at least one of the sections $g'|_x$ with $x\in \supp(\mu^{-1}\tau\mu)$ is non-trivial. In particular, the element $h\sim_\AT g$ provided by Lemma~\ref{lem: conjugacy-representative} has the maximal possible number of trivial first-level sections among the conjugates of $g$.
\end{rmk}

\begin{proof}[Proof of Proposition~\ref{prop: conjugacy-in-W}]
    We prove both implications for $g,g'\in \AT$; the statement for $g,g'\in \AT_n$ follows similarly. 
    
    \ref{item: conjugacy-i}$\Rightarrow$\ref{item: conjugacy-ii} Fix arbitrary $g=\left(g_1\,,\, \dots \, , \, g_d\right) \sigma_g, \, g'=\left(g'_1\,,\, \dots \, , \, g'_d\right) \sigma_{g'}\in \AT$, and suppose there exists $w=\left(w_1\,,\, \dots \, , \, w_d\right) \sigma_w \in \AT$ such that $g=wg'w^{-1}$. 
    By comparing the first-level actions, we immediately obtain that $\sigma_g=\sigma_w\,\sigma_{g'}\,\sigma_w^{-1}$.
    Let $\tau=(x_1 \, \cdots \,x_r)$ be a cycle in the disjoint cycle decomposition of $\sigma_g$. Then $\sigma_w^{-1}\tau \sigma_w= \big((x_1\cdot{\sigma_w}) \,\, \cdots\,\, (x_r\cdot {\sigma_w})\big)$ is a cycle in the disjoint cycle decomposition of $\sigma_{g'}$. Furthermore, using Lemma~\ref{lem: conj-action}, %\eqref{eq: conjugacy-in-W-1} 
    we have   
    \begin{align*}
    g_\tau&=g_{x_1}\cdots g_{x_r}=(w_{x_1} \, g'_{x_1\cdot {\sigma_w}} \, w^{-1}_{x_1\cdot {\sigma_g}}) (w_{x_2} \, g'_{x_2\cdot {\sigma_w}} \, w^{-1}_{x_2\cdot {\sigma_g}}) \cdots (w_{x_r} \, g'_{x_r\cdot {\sigma_w}} \, w^{-1}_{x_r\cdot {\sigma_g}})\\
    &=w_{x_1} \, g'_{\sigma_w^{-1}\tau\sigma_w} \, w^{-1}_{x_1}.
    \end{align*}
    It follows that $g_\tau\sim_\AT g'_{\sigma_w^{-1}\tau\sigma_w}$, and thus \ref{item: conjugacy-ii} holds with $\mu=\sigma_w$.

    \ref{item: conjugacy-ii}$\Rightarrow$\ref{item: conjugacy-i} Let $\sigma_g=\tau_1\cdots\tau_s$ be a disjoint cycle decomposition of $\sigma_g$. Suppose there exists $\mu\in \Sym_d$ with $\sigma_g=\mu \sigma_{g'}\mu^{-1}$ and such that $g_{\tau_j}\sim_\AT g'_{\mu^{-1}{\tau_j}\mu}$ for each cycle $\tau_j$. Pick an orbit transversal $\{x_1,\dots, x_s\}$ for the action of $\sigma_g$ on $\sX$ so that $x_j\in \supp(\tau_j)$ for each $j$. Then $\{x_1\cdot{\mu},\dots, x_s\cdot{\mu}\}$ is an orbit transversal for the action of $\sigma_{g'}$ on $\sX$ with $x_j\cdot \mu\in \supp(\mu^{-1}\tau_j\mu) $. Consider the automorphisms $h=\left(h_1\,,\, \dots \, , \, h_d\right) \sigma_h, \, h'=\left(h'_1\,,\, \dots \, , \, h'_d\right) \sigma_{h'}\in \AT$ satisfying \[\text{$\sigma_h=\sigma_g$ \quad  and \quad  $\sigma_{h'}=\sigma_{g'}$},\]
    as well as
    \[\text{$h_x= \begin{cases} g_{\tau_j}, \quad &\text{if $x=x_j$ for some cycle $\tau_j$}\\ \one, \quad &\text{otherwise}\end{cases}$ \quad and \quad   $h'_x= \begin{cases} g'_{\mu^{-1}\tau_j\mu}, \quad &\text{if $x=x_j\cdot \mu$ for some cycle $\tau_j$}\\ \one, \quad &\text{otherwise}.\end{cases}$} \]
    Then $g\sim_\AT h$ and $g'\sim_\AT h'$ by Lemma~\ref{lem: conjugacy-representative}.
    
    We now show that $h\sim_\AT h'$, which will conclude the proof of \ref{item: conjugacy-i}. First, for each $j\in\{1,\dots,s\}$, we fix an element $u_{j}\in \AT$ such that $g_{\tau_j}=u_{j}\, g'_{\mu^{-1}\tau_j\mu} \,u_{j}^{-1}$. Consider the automorphism $w=\left(w_1\,,\, \dots \, , \, w_d\right) \sigma_w$ with $\sigma_w = \mu$ and with $w_x=u_j$ whenever $x\in \supp(\tau_j)$. We claim that 
    \begin{equation}\label{eq: conjugacy-in-W-2}
    h=wh'w^{-1}.    
    \end{equation}
    Indeed, by construction we have $$\sigma_h=\sigma_g=\mu\sigma_{g'}\mu^{-1}=\sigma_w\sigma_{h'}\sigma_w^{-1},$$
    and thus the first-level actions of both sides in \eqref{eq: conjugacy-in-W-2} agree. At the same time, for each cycle $\tau_j$ and $x\in \supp(\tau_j)$ we have 
    \[x\cdot (wh'w^{-1})= x\cdot h = x\cdot \sigma_h = x\cdot \tau_j \in \supp(\tau_j),\]
    and hence
    \begin{align*}
    (wh'w^{-1})|_x&=w|_x\, h'|_{x\cdot w} \, w^{-1}|_{x\cdot(wh')}=u_j\, h'|_{x\cdot \mu} \,u^{-1}_j\\
    &=\begin{cases}
    u_j\,g'_{\mu^{-1}\tau_j\mu}\,u_j^{-1}= g_{\tau_j}, \quad &\text{if $x=x_j$}\\ u_j \, \one \, u_j^{-1}=\one, \quad &\text{otherwise}.\\
    \end{cases}
    \end{align*}
    It follows that all first-level sections of $h$ and $wh'w^{-1}$ agree, and therefore the identity~\eqref{eq: conjugacy-in-W-2} holds, finishing the proof.
\end{proof}

The above proof of the implication \ref{item: conjugacy-i}$\Rightarrow$\ref{item: conjugacy-ii} yields the following statement regarding conjugacy within a self-similar subgroup of $\AT$.  

\begin{cor} \label{cor: conjugacy-in-G}
    Let $G\subset \AT$ be a self-similar subgroup. Suppose $g,g'\in G$ are conjugate in $G$, and let $\sigma_g,\sigma_{g'}\in \Sym_d$ be their respective actions on $\sX$. 
    Then  
    there exists $\mu\in \Sym_d$ with $\sigma_g=\mu \sigma_{g'}\mu^{-1}$ such that for each cycle $\tau$ in the disjoint cycle decomposition of $\sigma_g$ we have $g_\tau\sim_G g'_{\mu^{-1}\tau\mu}$. In particular, for each $x \in {\rm Fix}(\sigma_g)$ we have $g|_x \sim_G g'|_{x \cdot \mu}$.

  Moreover, the analogous statement holds for all $n\geq 1$ and $g,g'\in G_n$ with $g\sim_{G_n} g'$.
\end{cor}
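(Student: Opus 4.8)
The plan is to revisit the proof of the implication \ref{item: conjugacy-i}$\Rightarrow$\ref{item: conjugacy-ii} of Proposition~\ref{prop: conjugacy-in-W} and observe that, once the conjugating element is taken inside $G$, the conjugacies between cyclic section products it produces are themselves realized by elements of $G$. Only the forward direction is asserted here: starting from a $G$-conjugacy $g=wg'w^{-1}$ with $w\in G$, I extract the permutation $\mu$ and the section-level conjugators, and the key point is that self-similarity of $G$ forces all of these conjugators into $G$.

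First I would write $w=\left(w_1,\dots,w_d\right)\sigma_w\in G$ with $g=wg'w^{-1}$, and set $\mu:=\sigma_w$. Comparing first-level actions gives $\sigma_g=\mu\sigma_{g'}\mu^{-1}$ immediately, exactly as in the cited proof. Next, for each cycle $\tau=(x_1\,\cdots\,x_r)$ in the disjoint cycle decomposition of $\sigma_g$, I would reproduce verbatim the telescoping computation using Lemma~\ref{lem: conj-action}: expanding $g_\tau=g_{x_1}\cdots g_{x_r}$ via $g_x=w_x\,g'_{x\cdot\sigma_w}\,w^{-1}_{x\cdot\sigma_g}$ and using that consecutive indices within $\tau$ cancel (since $x_i\cdot\sigma_g=x_{i+1}$), one obtains
\[
g_\tau = w_{x_1}\, g'_{\mu^{-1}\tau\mu}\, w_{x_1}^{-1}.
\]

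The decisive new observation is that $w_{x_1}=w|_{x_1}$ is a section of $w$, and since $w\in G$ and $G$ is self-similar (Definition~\ref{defn-self-similar-replicating}), we have $w|_{x_1}\in G$. Hence the identity above is a conjugacy inside $G$, giving $g_\tau\sim_G g'_{\mu^{-1}\tau\mu}$ for every cycle $\tau$. The final assertion then follows by specializing to a trivial cycle $\tau=(x)$ with $x\in\Fix(\sigma_g)$: here $g_\tau=g|_x$, the conjugator is $w|_x\in G$, and $g'_{\mu^{-1}\tau\mu}=g'|_{x\cdot\mu}$, so $g|_x\sim_G g'|_{x\cdot\mu}$.

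For the level-$n$ statement, the argument is identical after replacing $\AT$ by $\AT_n$, with one bookkeeping remark: if $g,g'\in G_n$ and $w\in G_n$ realizes $g\sim_{G_n}g'$, then its first-level sections $w|_x$ lie in $G_{n-1}$, precisely because self-similarity of $G$ implies that sections of any element of $G_n$ descend to $G_{n-1}$. The same telescoping identity then yields $g_\tau\sim_{G_{n-1}} g'_{\mu^{-1}\tau\mu}$. I expect no genuine obstacle beyond this tracking of where the conjugators live: the entire content is that self-similarity upgrades the $\AT$-conjugacies of Proposition~\ref{prop: conjugacy-in-W} to $G$-conjugacies. I would also emphasize that, unlike in Proposition~\ref{prop: conjugacy-in-W}, the converse need not hold within $G$, since $G$ may fail to contain conjugators realizing an arbitrary admissible $\mu$; this is why the corollary is phrased as an implication rather than an equivalence.
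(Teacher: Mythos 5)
Your proposal is correct and takes essentially the same route as the paper, which obtains the corollary directly from the proof of the implication \ref{item: conjugacy-i}$\Rightarrow$\ref{item: conjugacy-ii} of Proposition~\ref{prop: conjugacy-in-W}: taking $w\in G$, the telescoping computation yields $g_\tau=w_{x_1}\,g'_{\mu^{-1}\tau\mu}\,w_{x_1}^{-1}$ with $\mu=\sigma_w$, and self-similarity places the conjugator $w_{x_1}=w|_{x_1}$ in $G$. Your bookkeeping for the level-$n$ case (sections of elements of $G_n$ lie in $G_{n-1}$ because they are restrictions of sections of elements of $G$) is exactly the intended reading.
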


\subsubsection{Systems of conjugacies in \texorpdfstring{$\Aut(\sT_{\sX})$}{Aut(T\_X)}}  In the next sections, when working with the discrete (or profinite geometric) iterated monodromy groups, we will be able to describe the wreath recursion structure of their (topological) generating sets only up to conjugacy. More formally, we will be considering elements $g_1,\dots, g_k\in \AT=\Aut(\sT_\sX)$ that satisfy the following system of recursive relations:
\begin{align}\label{eq: rec-syst-conj}
\begin{aligned}
g_1 &\sim_\AT \left( h_{1,1} \, ,\,  \dots \, , \, h_{1,d}\right) \sigma_1,\\
g_2 &\sim_\AT \left( h_{2,1} \, ,\,  \dots \, , \, h_{2,d}\right)  \sigma_2,\\
& \vdots \\
g_k &\sim_\AT  \left( h_{k,1} \, ,\,  \dots \, , \, h_{k,d}\right) \sigma_k, \\
\end{aligned}
\end{align}
where each $h_{i,j}$ is a (non-empty) finite word in $\{\one, g_1,\, g_1^{-1},\, \dots,\, g_k,\, g_k^{-1}\}$ and each $\sigma_i$ is a permutation in $\Sym_d$. The following result shows that, under some extra assumptions on the wreath recursions on the right hand side (see Condition~\ref{cond: promoting-recursion-conjugacy} below), System~\eqref{eq: rec-syst-conj} specifies each element $g_1,\dots, g_k$ in a unique way up to conjugacy in $\AT$.  To simplify the notation, we set \[\AT[g_1,\dots,g_k]:=\AT*\mathbb{F} (g_1,\dots,g_k)\] to be the free product of $\AT$ and the free group $\mathbb{F} (g_1,\dots,g_k)$ on the set $\{g_1,\dots,g_k\}$; that is, $\AT[g_1,\dots,g_k]$ is the group formed by freely adjoining all the $g_i$'s to $\AT$.

 \begin{prop}\label{prop: promoting-recursion-conjugacy}

 Fix $k\geq 1$, and consider $k$ symbols $g_1,\dots, g_k$, as well as
 $kd$ words $h_{i,j}\in \AT[g_1,\dots, g_k]$ with $i\in\{1,\dots,k\}$ and $j\in\{1,\dots,d\}$, together with $k$ permutations $\sigma_1,\dots,\sigma_k\in \Sym_d$.

 Let us suppose that 
\begin{enumerate}[label=\normalfont{($\square$)}]
\item\label{cond: promoting-recursion-conjugacy}
     for each $i\in\{1,\dots, k\}$ and each cycle $\tau=(x_1\, \cdots \, x_r)$ in the disjoint cycle decomposition of $\sigma_i$, the product $h_{i,x_1}\cdots h_{i,x_r}$ is conjugate to $g_j^m$ in $\AT[g_1,\dots,g_k]$ for some $j\in \{1,\dots,k\}$ and $m\in \Z$ (where both $j$ and $m$ may depend on $i$ and $\tau$).
 \end{enumerate}

 Next, let $\widehat{g}_1,\dots,\widehat{g}_k\in \AT$ be $k$ automorphisms defined by the following system of recursive formulas
 \[\widehat{g}_i=\left( \widehat{h}_{i,1} \, ,\,  \dots \, , \, \widehat{h}_{i,d}\right)  \sigma_i, \quad {i=1,\dots,k},\]
 where each word $\widehat h_{i,j}$ is obtained from $h_{i,j}$ by replacing every occurrence of $g_t$ and $g_t^{-1}$ with $\widehat g_t$ and $\widehat g_t^{-1}$, respectively, for each $t=1,\dots,k$.  
 
 Finally, fix elements $\phi(g_1),\dots,\phi(g_k)\in \AT$, and consider the induced canonical quotient map $\phi\colon\AT[g_1,\dots,g_k]\to \AT$. Then the following statements are equivalent:
     \begin{enumerate}[label=(\roman*),font=\normalfont]
     \item\label{item: promoting-recursion-conjugacy-i} For every $i=1,\dots,k$ we have $\phi(g_i) \sim_\AT \left( 
     \phi(h_{i,1}) \, ,\,  \dots \, , \, \phi(h_{i,d})\right)  \sigma_i$.
     \item\label{item: promoting-recursion-conjugacy-ii} For every $i=1,\dots,k$ we have $\phi(g_i) \sim_\AT \widehat{g}_i$. 
     \end{enumerate}
 \end{prop}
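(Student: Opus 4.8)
The plan is to reduce everything to the conjugacy criterion of Proposition~\ref{prop: conjugacy-in-W} and its level-$n$ analogue, exploiting the fact that Condition~\ref{cond: promoting-recursion-conjugacy} forces every \emph{cyclic section product} occurring in these recursions to be conjugate to a power of a single generator. Two preliminary observations drive the argument. First, since $\AT[g_1,\dots,g_k]=\AT*\mathbb{F}(g_1,\dots,g_k)$ is a free product, the assignment $g_t\mapsto \widehat g_t$ together with the identity on $\AT$ extends, by the universal property, to a homomorphism $\psi\colon \AT[g_1,\dots,g_k]\to \AT$ with $\psi(g_i)=\widehat g_i$ and $\psi(h_{i,j})=\widehat h_{i,j}$; likewise $\phi$ is such a homomorphism. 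As group homomorphisms, both $\phi$ and $\psi$ (and each restriction map $\,\cdot\,|_{\sT_n}$) preserve conjugacy. Second, for a cycle $\tau=(x_1\,\cdots\,x_r)$ of $\sigma_i$, Condition~\ref{cond: promoting-recursion-conjugacy} furnishes $j,m$ with $h_{i,x_1}\cdots h_{i,x_r}\sim g_j^m$ in $\AT[g_1,\dots,g_k]$; applying $\phi$ and $\psi$ yields
\[\phi(h_{i,x_1})\cdots \phi(h_{i,x_r})\sim_\AT \phi(g_j)^m \quad\text{and}\quad \widehat h_{i,x_1}\cdots \widehat h_{i,x_r}\sim_\AT \widehat g_j^m.\]
Thus the relevant cyclic section products are, up to conjugacy, just the powers $\phi(g_j)^m$ and $\widehat g_j^m$.

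For the implication \ref{item: promoting-recursion-conjugacy-ii}$\Rightarrow$\ref{item: promoting-recursion-conjugacy-i} I would argue directly in $\AT$. Assuming $\phi(g_i)\sim_\AT \widehat g_i$ for all $i$, I claim $\widehat g_i=(\widehat h_{i,1},\dots,\widehat h_{i,d})\sigma_i$ is conjugate to $(\phi(h_{i,1}),\dots,\phi(h_{i,d}))\sigma_i$; granting the claim, chaining with $\phi(g_i)\sim_\AT\widehat g_i$ gives \ref{item: promoting-recursion-conjugacy-i}. To prove the claim I apply Proposition~\ref{prop: conjugacy-in-W} with $\mu$ the identity permutation: both elements have first-level action $\sigma_i$, so it suffices to check that, along each cycle $\tau$ of $\sigma_i$, their cyclic section products agree up to conjugacy. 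By the second observation these products are $\widehat g_j^m$ and $\phi(g_j)^m$, and $\phi(g_j)\sim_\AT\widehat g_j$ gives $\phi(g_j)^m\sim_\AT\widehat g_j^m$, as required.

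The substantial direction is \ref{item: promoting-recursion-conjugacy-i}$\Rightarrow$\ref{item: promoting-recursion-conjugacy-ii}, and here I would use a level-by-level induction, since \ref{item: promoting-recursion-conjugacy-i} only specifies the recursions up to conjugacy and so cannot pin down $\phi(g_i)$ all at once. By Lemma~\ref{lem: conjugacy-works-levelwise} it suffices to show $\phi(g_i)|_{\sT_n}\sim_{\AT_n}\widehat g_i|_{\sT_n}$ for all $i$ and all $n\geq 0$, which I prove by induction on $n$ (the case $n=0$ being trivial). For the inductive step, restricting the conjugacy in \ref{item: promoting-recursion-conjugacy-i} to level $n+1$ gives
\[\phi(g_i)|_{\sT_{n+1}}\sim_{\AT_{n+1}} \bigl(\phi(h_{i,1})|_{\sT_n},\dots,\phi(h_{i,d})|_{\sT_n}\bigr)\sigma_i,\]
while by definition $\widehat g_i|_{\sT_{n+1}}=\bigl(\widehat h_{i,1}|_{\sT_n},\dots,\widehat h_{i,d}|_{\sT_n}\bigr)\sigma_i$. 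Both restricted recursions have first-level action $\sigma_i$, so by the $\AT_{n+1}$-version of Proposition~\ref{prop: conjugacy-in-W} (again with $\mu$ the identity) it is enough to match their cyclic section products in $\AT_n$ along each cycle $\tau$ of $\sigma_i$. Since restriction to $\sT_n$ preserves conjugacy, the second observation shows these products are conjugate in $\AT_n$ to $(\phi(g_j)|_{\sT_n})^m$ and $(\widehat g_j|_{\sT_n})^m$; the inductive hypothesis $\phi(g_j)|_{\sT_n}\sim_{\AT_n}\widehat g_j|_{\sT_n}$ then makes these powers conjugate. Hence $\phi(g_i)|_{\sT_{n+1}}\sim_{\AT_{n+1}}\widehat g_i|_{\sT_{n+1}}$, closing the induction.

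I expect the main obstacle to lie entirely in organizing this second direction: because the hypothesis describes each $g_i$ only up to conjugacy, one must avoid circularity by establishing the conjugacies for \emph{all} $i$ simultaneously at level $n+1$ using only level-$n$ data. Condition~\ref{cond: promoting-recursion-conjugacy} is precisely what makes this feasible, since it collapses each cyclic section product to a power of a single $g_j$, to which the inductive hypothesis applies; without it, the cyclic section products would be arbitrary words in the restrictions $\phi(g_t)|_{\sT_n}$ and the induction would fail to close.
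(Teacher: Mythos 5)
Your proof is correct and takes essentially the same route as the paper's: both directions reduce to the cyclic-section-product criterion of Proposition~\ref{prop: conjugacy-in-W}, with the harder implication \ref{item: promoting-recursion-conjugacy-i}$\Rightarrow$\ref{item: promoting-recursion-conjugacy-ii} handled by the same simultaneous (over all $i$) level-by-level induction justified by Lemma~\ref{lem: conjugacy-works-levelwise}, and Condition~\ref{cond: promoting-recursion-conjugacy} used exactly as in the paper to collapse each cyclic section product to a power of a single generator. Your explicit substitution homomorphism $\psi$ coming from the free-product structure of $\AT[g_1,\dots,g_k]$ merely formalizes a step the paper treats as immediate.
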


\begin{warn}
We note that Proposition~\ref{prop: promoting-recursion-conjugacy} does not hold if Condition~\ref{cond: promoting-recursion-conjugacy} is dropped. Indeed, let us consider the following system of recursive relations in $\AT=\Aut(\sT_\sX)$ with $\sX=\{1,2\}$:
\begin{align}\label{ex: rec-syst-conj}
\begin{aligned}
 a &\sim_\AT \left( a \, , \,  b\, \right) (1\, 2),\\
b &\sim_\AT \left( \one \, , \, \one\right) (1\, 2).\\
\end{aligned}
\end{align}
Clearly, the unique automorphisms $\widehat a,\, \widehat b\in \AT$ with \[\text{$\widehat a = \left( \widehat a\, ,\,  \widehat b\,\right) (1\, 2)$ \quad  and \quad $\widehat b = \left( \one \, , \, \one\right) (1\, 2)$}\] satisfy~\eqref{ex: rec-syst-conj}. However, even though the elements \[\text{$a':=\left(\, \widehat b \,\widehat a\, ,\,  \one \right) (1\, 2)$ \quad and \quad $b':=\widehat b$}\] satisfy $a'\sim_\AT \widehat a$ and $b'\sim_\AT \widehat b$, they do not satisfy System~\eqref{ex: rec-syst-conj}, as can be easily verified using Proposition~\ref{prop: conjugacy-in-W}. Conversely, $a := \widehat b$ and  $b := \widehat b$ satisfy~\eqref{ex: rec-syst-conj}, but $a$ is not conjugate to $\widehat a$.
\end{warn}

\begin{proof}[Proof of Proposition~\ref{prop: promoting-recursion-conjugacy}] We will use the conjugacy criterion from Proposition~\ref{prop: conjugacy-in-W} to prove both implications.  

    \ref{item: promoting-recursion-conjugacy-i}$\Rightarrow$    \ref{item: promoting-recursion-conjugacy-ii} By Lemma~\ref{lem: conjugacy-works-levelwise}, it suffices to show the following assertion for all $n\geq0$:

    \begin{enumerate}[label=\normalfont{$(*_n)$}]
    \item\label{induction: promoting-recursion-conjugacy}
        For each $i=1,\dots, k$ we have $\phi(g_i)|_{\sT_n}\sim_{\AT_n} \widehat{g}_i|_{\sT_n}$.
    \end{enumerate}

    We use induction on $n$. The statement is trivial for $n=0$, so assume we have proved \ref{induction: promoting-recursion-conjugacy} for some $n\geq 0$. Fix an arbitrary $i\in \{1,\dots, k\}$, and consider the automorphism \[h_i:=\left( 
     \phi(h_{i,1}) \, ,\,  \dots \, , \, \phi(h_{i,d})\right)  \sigma_i\in \AT.\] Then $\phi(g_i)\sim_\AT h_i$ by Assumption~\ref{item: promoting-recursion-conjugacy-i}. Let $\tau=(x_1\, \cdots \, x_r)$ be a cycle in the disjoint cycle decomposition of $\sigma_i$. By Condition~\ref{cond: promoting-recursion-conjugacy}, there exist some $j\in \{1,\dots, k\}$ and $m\in \Z$ such that $h_{i,x_1}\cdots h_{i,x_r}$ is conjugate to $g_j^m$ in $\AT[g_1,\dots,g_k]$, which immediately implies that $\phi(h_{i,x_1})\cdots \phi(h_{i,x_r})=\phi(h_{i,x_1}\cdots h_{i,x_r})$ is conjugate to $\phi(g_j)^m=\phi(g_j^m)$ in $\AT$. It also follows that $\widehat{h}_{i,x_1}\cdots \widehat{h}_{i,x_r}$ is conjugate to ${\widehat{g}_j}^{\,m}$ in $\AT$. Using our induction hypothesis \ref{induction: promoting-recursion-conjugacy}, we deduce that
    \[\big(\phi(h_{i,x_1})\cdots \phi(h_{i,x_r})\big)|_{\sT_{n}} \,\sim_{\AT_{n}}\, \phi(g_j)^m|_{\sT_{n}}\,\sim_{\AT_{n}}\,{\widehat{g}_j}^{\,m}|_{\sT_{n}}\,\sim_{\AT_{n}}\,\big(\widehat{h}_{i,x_1}\cdots \widehat{h}_{i,x_r}\big)|_{\sT_{n}}.\]
    We conclude that $(h_i)_{\tau}|_{\sT_{n}}\sim_{\AT_{n}}(\widehat{g}_i)_{\tau}|_{\sT_{n}}$ for all cycles $\tau$ in the decomposition of $\sigma_i$.  Proposition~\ref{prop: conjugacy-in-W} then implies that $h_i|_{\sT_{n+1}}\sim_{\AT_{n+1}} \widehat{g}_i|_{\sT_{n+1}}$, and thus $\phi(g_i)|_{\sT_{n+1}}\sim_{\AT_{n+1}} \widehat{g}_i|_{\sT_{n+1}}$ as well, completing the induction step.

    \ref{item: promoting-recursion-conjugacy-ii}$\Rightarrow$    \ref{item: promoting-recursion-conjugacy-i} Let us again fix some $i\in \{1,\dots,k\}$ and consider the automorphism $h_i\in \AT$ defined as above. As before, Condition~\ref{cond: promoting-recursion-conjugacy} implies that for each cycle $\tau$ in the disjoint cycle decomposition of $\sigma_i$ there exist $j\in \{1,\dots, k\}$ and $m\in \Z$ such that $(h_i)_\tau\sim_\AT \phi(g_j)^m$ and $(\widehat g_i)_\tau\sim_\AT \widehat{g}_j^{\,m}$. Using our Assumption~\ref{item: promoting-recursion-conjugacy-ii}, we then obtain:
    \[(h_i)_\tau \,\sim_\AT\, \phi(g_j)^m \,\sim_\AT\, \widehat{g}_j^{\,m} \,\sim_\AT\, (\widehat{g}_i)_\tau.\] Since this holds for all cycles $\tau$ in the decomposition of $\sigma_i$, Proposition~\ref{prop: conjugacy-in-W} implies that $\widehat g_i\sim_\AT h_i$, and thus  $\phi(g_i)\sim_\AT h_i$ as well.  Hence, \ref{item: promoting-recursion-conjugacy-i} follows. 
\end{proof}

\section{Discrete iterated monodromy groups and their standard generators}\label{sec: descrete-img}

In this section, we briefly review the general theory of discrete iterated monodromy groups developed by Nekrashevych \cite{Nekrashevych2005}. In particular, we describe the wreath recursion structure of specific generating sets for these groups in Proposition~\ref{prop: img-gen-structure}. This will allow us to describe, up to element-wise conjugation, the recursive action of the \emph{standard} (topological) \emph{generators} of profinite geometric iterated monodromy groups in our theorems, due to Proposition~\ref{prop: discrete-vs-geometric-img} below.

\subsection{Discrete iterated monodromy group}\label{subsec: img-discr}
Throughout this section, we suppose that $f\colon \rs\to \rs$ is a PCF rational map over $\C$ of degree $d\geq 2$. Recall that $\crit(f)$ and $\post(f)=\bigcup_{n\geq 1} f^n\big(\crit(f)\big)$ denote the (finite) sets of critical and postcritical points of $f$, respectively. We also note that the map 
\begin{equation}\label{eq: fn-cover}
    f^n\colon \rs\setminus f^{-n}\big(\post(f)\big) \to \rs\setminus \post(f)
\end{equation}    
is a covering for each $n\geq 0$.

Fix an arbitrary basepoint $z_0\in \MM:=\rs\setminus \post(f)$, and consider the \emph{full backward orbit} of~$z_0$, that is, the formal
disjoint union 
$$\bT_f := \bigsqcup_{n=0}^\infty f^{-n}(z_0),$$
where we set $f^{-n}(z_0):= f^{-n}(\{z_0\})$ for simplicity. The set $\bT_f$ has a natural structure of a $d$-ary rooted tree (see Definition~\ref{defn-d-ary}): we define the root to be the basepoint $z_0$ and connect every point $z\in f^{-n}(z_0)$ with $n\geq 1$ to its image $f(z)\in f^{-(n-1)}(z_0)$ under $f$. The set $\bT_f$, viewed as a rooted tree, is called the \emph{dynamical preimage tree} of $f$ (at the basepoint~$z_0$). Note that the $n$-th level of this tree is given exactly by $f^{-n}(z_0)$.

For each $n\geq 0$, the fundamental group $\pi_1(\MM, z_0)$ acts on the $n$-th level of $\bT_f$ by the
\emph{monodromy action} for the covering \eqref{eq: fn-cover}. Namely, the image of a point $z\in f^{-n}(z_0)$ under the action of $[\gamma]\in  \pi_1(\MM, z_0)$ is defined to be the endpoint $z\cdot [\gamma]\in  f^{-n}(z_0)$ of the unique $f^n$-lift of the loop $\gamma$ that starts at $z$. The induced action of $\pi_1(\MM,z_0)$ on $\bT_f$ is called the \emph{iterated monodromy action}. It is straightforward to verify that this action preserves the rooted tree structure of $\bT_f$, that is, we get a group homomorphism
\[\Phi_f\colon \pi_1(\MM,z_0)\to \Aut(\bT_f).\]
Furthermore, up to conjugation, the iterated monodromy action does not depend on the choice of the basepoint $z_0$.

\begin{defn}
    The \emph{discrete iterated monodromy group} of a PCF rational map  $f\colon \rs\to \rs$ is defined to be 
    \[\IMG(f):=\pi_1(\MM, z_0)\, / \, \Ker(\Phi_f)\,  \cong \, \operatorname{Im}(\Phi_f). \] 
\end{defn}

We note that 
\[\Ker(\Phi_f) = \bigcap_{n=0}^\infty\Ker_n,\]
where $\Ker_n\subset \pi_1(\MM,z_0)$ denotes the kernel of the monodromy action for the covering~\eqref{eq: fn-cover}.  
It is well-known that the monodromy group $\pi_1(\MM,z_0)/\Ker_n$ of \eqref{eq: fn-cover} is isomorphic to the Galois group $\operatorname{Gal}\left(K_n/\C(t)\right)$, where $K_n$ is the finite degree extension of $\C(t)$ obtained by adjoining all the solutions of $f^n(z)=t$ to $\C(t)$ in some algebraic closure of $\C(t)$ \cite[Thm.~8.12]{Forster}. Moreover, the corresponding actions on $f^{-n}(z_0)$ and on the $d^n$ solutions of $f^n(z)=t$ are conjugate. 
Since the $d$-ary rooted tree $T$, whose vertices are identified with the solutions of the equations $f^n(z) - t=0$ as in Section~\ref{subsec: profinite-imgs}, is isomorphic to the dynamical preimage tree~$\bT_f$, it follows that the profinite geometric iterated monodromy group $G^{geom}(f) \subset \Aut(T)$ is naturally isomorphic to the profinite completion of the discrete iterated monodromy group $\IMG(f)$ in $\Aut(\bT_f)$.

\begin{prop}[{\cite[Prop.~2.1]{Jones}; see also \cite[Prop.~6.4.2]{Nekrashevych2005}}]\label{prop: discrete-vs-geometric-img}
    Let $f\in \C(z)$ be a PCF rational map. Suppose $$\overline{\IMG}(f):=\varprojlim \pi_1(\MM, z_0)/\Ker_n$$ is the profinite completion of $\pi_1(\MM, z_0)$ with respect to the sequence $\{\Ker_n\}$ of kernels of the monodromy actions for the iterated coverings \eqref{eq: fn-cover}. Then $\overline{\IMG}(f)$ is isomorphic to the profinite geometric iterated monodromy group $G^{geom}(f)$, that is, to the Galois group of $\mathcal{K}=\bigcup_{n\geq 1}{K_n}$ over $\C(t)$. Moreover, the corresponding actions on the dynamical preimage tree $\bT_f$ and on the $d$-ary rooted tree $T$ are conjugate.
\end{prop}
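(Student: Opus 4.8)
The plan is to establish the isomorphism at each finite level $n$ and then pass to the inverse limit, taking care that the level-wise identifications are compatible with the projection maps of the two inverse systems. The input is the finite-level correspondence recalled just before the statement: for each fixed $n$, the field $K_n$ is the splitting field of the separable polynomial $f^n(z)-t$ over $\C(t)$, hence $K_n/\C(t)$ is Galois, and the Riemann existence theorem (\cite[Thm.~8.12]{Forster}) yields an isomorphism $\pi_1(\MM,z_0)/\Ker_n\cong\operatorname{Gal}(K_n/\C(t))$ that intertwines the monodromy action on the fiber $f^{-n}(z_0)$ with the Galois action on the $d^n$ roots of $f^n(z)-t$. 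Concretely, this isomorphism is realized by a bijection $\beta_n\colon f^{-n}(z_0)\to\{\text{roots of }f^n(z)-t\}$ conjugating the two permutation representations.

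First I would set up the two inverse systems. On the topological side, a loop acting trivially on the $(n+1)$-st level of $\bT_f$ also acts trivially on the $n$-th level, since the iterated monodromy action commutes with $f$ and $f$ maps $f^{-(n+1)}(z_0)$ onto $f^{-n}(z_0)$; hence $\Ker_{n+1}\subseteq\Ker_n$, and the quotients $\pi_1(\MM,z_0)/\Ker_n$ form an inverse system under the natural projections. On the algebraic side, the inclusions $K_n\subseteq K_{n+1}$ give restriction maps $\operatorname{Gal}(K_{n+1}/\C(t))\to\operatorname{Gal}(K_n/\C(t))$, and $\varprojlim\operatorname{Gal}(K_n/\C(t))=\operatorname{Gal}(\mathcal K/\C(t))$ by infinite Galois theory.

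The core step is to verify that the level-wise isomorphisms commute with these projections, i.e.\ that the bijections $\beta_n$ can be chosen \emph{coherently} so as to respect the tree structure: the edge relation on $\bT_f$ (given by $f$) must correspond, under $\beta_{n+1}$ and $\beta_n$, to the edge relation on $T$ (given by the tower $K_n\subseteq K_{n+1}$). Once this coherence is arranged, the $\beta_n$ assemble into a rooted-tree isomorphism $\Theta\colon\bT_f\to T$, and the finite-level isomorphisms commute with the projection maps. Passing to the inverse limit then gives
$$\overline{\IMG}(f)=\varprojlim\pi_1(\MM,z_0)/\Ker_n\;\cong\;\varprojlim\operatorname{Gal}(K_n/\C(t))=\operatorname{Gal}(\mathcal K/\C(t)),$$
and the latter equals $G^{geom}(f)$ when computed over $\C(t)$, by the field-independence of the geometric group recalled after Definition~\ref{defn-geometric-pimg}. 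Under $\Theta$ the monodromy action of $\overline{\IMG}(f)$ on $\bT_f$ is carried to the Galois action of $G^{geom}(f)$ on $T$, so the two actions are conjugate in $\Aut(T)$, with $\Theta$ (or an automorphism correcting the choice of the $\beta_n$) serving as conjugator.

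I expect the main obstacle to be the coherence of the identifications $\beta_n$ across levels, rather than the existence of the isomorphism at any single level. The level-by-level statement of conjugacy does not by itself produce a single tree isomorphism conjugating the whole actions; one must check that the monodromy--Galois dictionary is functorial in $n$, so that the bijections commute with both the covering map $f$ and the field restrictions. This is precisely the point where the full tree structure of $\bT_f$ and $T$, and not merely the permutation representations on individual fibers, must be used.
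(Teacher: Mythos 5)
Your proposal takes the same route as the paper, which does not prove this proposition itself but quotes it from Jones and Nekrashevych and, in the paragraph preceding the statement, sketches exactly your argument: the level-$n$ identification of $\pi_1(\MM,z_0)/\Ker_n$ with $\operatorname{Gal}(K_n/\C(t))$ via \cite[Thm.~8.12]{Forster}, the identification of the tree $T$ with $\bT_f$, and passage to the inverse limit. Your auxiliary steps are all correct: the nesting $\Ker_{n+1}\subseteq\Ker_n$ (your argument via surjectivity of $f\colon f^{-(n+1)}(z_0)\to f^{-n}(z_0)$ is right), the two inverse systems, $\varprojlim\operatorname{Gal}(K_n/\C(t))=\operatorname{Gal}(\mathcal K/\C(t))$ by infinite Galois theory, and the reduction to $\C(t)$ by field-independence of $G^{geom}(f)$.

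The one step you flag but do not carry out --- choosing the bijections $\beta_n$ coherently --- is indeed the only real content, and it should be closed rather than left as an expectation, since a priori each $\beta_n$ is only well defined up to the centralizer of the level-$n$ action. The standard device makes coherence automatic rather than something to be corrected level by level: fix one embedding of $\mathcal K$ into the algebraically closed field of convergent Puiseux series centered at $z_0$. Since the branch values of $f^n$ lie in $\post(f)$ and $z_0\in\MM$, each $K_n$ is unramified over $z_0$, so under this embedding every root of $f^n(z)-t$ becomes a holomorphic germ at $z_0$, i.e.\ a germ of a local section of the covering $f^n$; define $\beta_n^{-1}$ by evaluating the germ at $z_0$, landing in $f^{-n}(z_0)$. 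Injectivity holds because a local section of a covering is determined by its value at one point; coherence across levels is immediate, since $f$ applied to a root germ of $f^{n+1}(z)-t$ is a root germ of $f^{n}(z)-t$ whose value at $z_0$ is $f$ of the original value; and equivariance --- that analytic continuation of these germs along a loop realizes the Galois action under the chosen embedding --- is precisely the content of the Forster citation, now pinned to a single embedding for all $n$ simultaneously. With this construction, the $\beta_n$ assemble into your tree isomorphism $\Theta\colon T\to\bT_f$ (note your $\beta_n$ goes in the opposite direction, which is immaterial), and your inverse-limit argument then goes through verbatim.
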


\subsection{Self-similarity of the iterated monodromy action}\label{subsec: self-similarity} Let $\sX$ be any alphabet of $d=\deg(f)$ letters. Clearly, the tree of words $\sT_\sX$ and the dynamical preimage tree $\bT_f$ are isomorphic, as they are both $d$-ary rooted trees. In fact, there exists a particularly natural class of such isomorphisms that are constructed as follows.

Fix an arbitrary bijection $\Lambda\colon \sX \to f^{-1}(z_0)$ between 
the vertices on the first levels of the trees $\sT_\sX$ and $\bT_f$. In addition, for each $x\in \sX$ choose a \emph{connecting path} $\ell_x\subset \MM$ from the basepoint $z_0$ to its preimage $\Lambda(x)\in f^{-1}(z_0)$. We may now inductively extend the bijection $\Lambda\colon \sX \to f^{-1}(z_0)$ to an isomorphism $\Lambda\colon \sT_\sX  \to \bT_f$ of rooted trees as follows: Set $\Lambda(\emptyset)=z_0$, and assume that $\Lambda\colon \sX^n \to f^{-n}(z_0)$ has been defined for some $n\geq 0$. Then for all $v\in \sX^n$ and $x\in \sX$, we declare $\Lambda(xv)$ to be the endpoint of the unique $f^n$-lift of the path $\ell_x$ starting at $\Lambda(v)\in f^{-n}(z_0)$. One can check that the resulting map $\Lambda\colon \sT_\sX \to \bT_f$, which we call a \emph{topological labeling map}, is indeed an isomorphism of rooted trees; see \cite[Prop.~5.2.1]{Nekrashevych2005}.

Let us conjugate the iterated monodromy action of $\pi_1(\MM,z_0)$ on $\bT_f$ by the isomorphism $\Lambda\colon  \sT_\sX \to \bT_f$. The resulting action of $\pi_1(\MM,z_0)$ (and of $\IMG(f)$) on the tree of words $\sT_\sX$ is called a \emph{standard action} of $\pi_1(\MM,z_0)$ (respectively, of $\IMG(f)$) on $\sT_\sX$. The following proposition allows one to compute this action (i.e., the wreath recursion) for a given element of the fundamental group $\pi_1(\MM,z_0)$.

\begin{prop}[{\cite[Prop.~5.2.1]{Nekrashevych2005}}]\label{prop: img-self-similar}
The standard action of $[\gamma]\in \pi_1(\MM, z_0)$ on  $\sT_\sX$ satisfies 
\begin{equation}\label{eq: img-restriction}
(xv)\cdot {[\gamma]}=y(v\cdot {[\ell_x \gamma_x \ell_y^{-1}]}),
\end{equation}
where $x\in \sX$ and $v\in \sX^*$ are arbitrary, $\gamma_x$ is the $f$-lift of the loop $\gamma$ starting at $\Lambda(x)$, and $y\in\sX$ is such that $\Lambda(y)$ is the endpoint of $\gamma_x$ (i.e.,  $y = x\cdot {[\gamma]}$).

In particular,
\begin{equation}\label{eq: img}
    \{\Lambda^{-1} \circ \Phi_f([\gamma]) \circ \Lambda : \, [\gamma]\in \pi_1(\MM, z_0)\}
\end{equation}
is a self-similar subgroup of $\Aut(\sT_\sX)$. 
\end{prop}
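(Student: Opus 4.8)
The plan is to carry out the computation on the dynamical preimage tree $\bT_f$ and then transport the resulting identity to the tree of words $\sT_\sX$ through the labeling isomorphism $\Lambda$. Since the standard action is by definition the conjugate $\Lambda^{-1}\circ\Phi_f([\gamma])\circ\Lambda$, proving \eqref{eq: img-restriction} amounts to describing how the monodromy automorphism $\Phi_f([\gamma])$ moves the vertex $\Lambda(xv)$ and to recognizing its image as $\Lambda$ applied to the asserted word. I would first record the two facts about $\Lambda$ that get used repeatedly: for $v\in\sX^n$, the vertex $\Lambda(xv)$ is by construction the endpoint of the $f^n$-lift of the connecting path $\ell_x$ starting at $\Lambda(v)$, and consequently $f^n(\Lambda(xv))=\Lambda(x)$, so $\Lambda(xv)$ lies over $\Lambda(x)$ in the intermediate cover.

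The key step is to factor the monodromy lift through the tower $f^{n+1}=f\circ f^n$. To compute $\Lambda(xv)\cdot[\gamma]$ I would lift $\gamma$ through $f^{n+1}$ starting at $\Lambda(xv)$, using uniqueness of path lifting for the coverings \eqref{eq: fn-cover} at each stage. Writing $P$ for this full lift and setting $Q:=f^n\circ P$, one sees that $Q$ is the $f$-lift of $\gamma$ starting at $f^n(\Lambda(xv))=\Lambda(x)$, i.e.\ $Q=\gamma_x$, which ends at $\Lambda(y)$ with $y=x\cdot[\gamma]$; and $P$ is then the $f^n$-lift of $\gamma_x$ starting at $\Lambda(xv)$. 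Hence $\Lambda(xv)\cdot[\gamma]$ is precisely the endpoint $q$ of the $f^n$-lift of $\gamma_x$ beginning at $\Lambda(xv)$.

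To identify $q$ with a vertex of the form $\Lambda(y\,w)$, I would reinterpret the monodromy action of the loop $\ell_x\gamma_x\ell_y^{-1}\in\pi_1(\MM,z_0)$ on $v$. Its $f^n$-lift from $\Lambda(v)$ splits, by uniqueness of lifts, into three consecutive segments: the lift of $\ell_x$ reaches $\Lambda(xv)$ (by the construction of $\Lambda$), the lift of $\gamma_x$ reaches $q$ (the point just computed), and the lift of $\ell_y^{-1}$ reaches $\Lambda\big(v\cdot[\ell_x\gamma_x\ell_y^{-1}]\big)$ by definition of the conjugated action. Reversing the third segment, the $f^n$-lift of $\ell_y$ starting at $\Lambda\big(v\cdot[\ell_x\gamma_x\ell_y^{-1}]\big)$ ends at $q$; comparing this with the defining property of $\Lambda$ — that the $f^n$-lift of $\ell_y$ from $\Lambda(w)$ ends at $\Lambda(yw)$ — forces $q=\Lambda\big(y\,(v\cdot[\ell_x\gamma_x\ell_y^{-1}])\big)$. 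Applying $\Lambda^{-1}$ yields \eqref{eq: img-restriction}. For the self-similarity claim, note that $\ell_x\gamma_x\ell_y^{-1}$ is again a loop at $z_0$, so \eqref{eq: img-restriction} exhibits each first-level section of an element of the set \eqref{eq: img} as another element of \eqref{eq: img}; combined with the section identity $g|_{vu}=(g|_v)|_u$, an induction on word length gives self-similarity in the sense of Definition~\ref{defn-self-similar-replicating}.

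The main obstacle I anticipate is convention-tracking rather than anything conceptual: one must keep the right-action convention, the path-concatenation order in $\ell_x\gamma_x\ell_y^{-1}$, and the factorization order $f^{n+1}=f\circ f^n$ mutually consistent, so that the three lifted segments compose in exactly the required order and the orientations of the connecting paths $\ell_x,\ell_y$ cancel correctly. All the topological input — existence and uniqueness of lifts — is supplied by the covering property \eqref{eq: fn-cover}, so once the conventions are fixed the argument reduces to a careful bookkeeping of lifts.
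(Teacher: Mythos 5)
Your proof is correct and follows exactly the standard argument for this result: the paper itself gives no proof, deferring to \cite[Prop.~5.2.1]{Nekrashevych2005}, and your factorization of the $f^{n+1}$-lift through $f^{n+1}=f\circ f^n$, the identification $Q=\gamma_x$, and the three-segment lift of $\ell_x\gamma_x\ell_y^{-1}$ with the reversal of the $\ell_y$-segment reproduce Nekrashevych's proof faithfully, including the correct handling of the right-action and concatenation conventions. The self-similarity conclusion via $g|_{vu}=(g|_v)|_u$ and the observation that $\ell_x\gamma_x\ell_y^{-1}$ is again a loop at $z_0$ is likewise the intended argument.
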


\subsection{\texorpdfstring{Generators of $\IMG(f)$ and $G^{geom}(f)$}{Generators of G\_discr(f) and G\_ geom(f)}}\label{subsec-generators}

In the following, we fix a topological labeling map $\Lambda\colon \sT_\sX\to \bT_f$ with $\sX=\{1,\dots,d\}$, and identify $\IMG(f)$ with the subgroup \eqref{eq: img} of $\AT=\Aut(\sT_\sX)$ induced by the associated standard action. By Proposition~\ref{prop: discrete-vs-geometric-img}, we may then also identify the profinite geometric iterated monodromy group $G^{geom}(f)$ with the closure of $\IMG(f)$ in the automorphism group $\AT$. Hence, we may transfer the information about generators of $\IMG(f)$ to $G^{geom}(f)$. Our goal in this subsection is to describe the wreath recursion structure of some natural generating set for the former group.

For each $p\in \post(f)$, let us fix a small positively oriented circle $\alpha_p$ around $p$ and connect it to the basepoint $z_0$ to form a loop $\gamma_p\subset \MM$ based at $z_0$. More formally, we pick a point $z_p\in \alpha_p$ and  connect $z_0$ to $z_p$ by a path $\beta_p\subset \rs\setminus \bigcup_{q\in P(f)}\Delta_q$, where $\Delta_q$ denotes the connected component of $\rs\setminus \alpha_q$ that contains $q\in P(f)$. Then, we define the loop $\gamma_p$ as the concatenation $\beta_p\alpha_p\beta_p^{-1}$, where we regard the positively oriented circle $\alpha_p$ as a loop based at $z_p$. We call the element $[\gamma_p]\in \pi_1(\MM, z_0)$, as well as the loop $\gamma_p$, a \emph{$p$-petal} (in~$\MM$), and denote by $g_p$ the element of $\IMG(f)$ corresponding to it. Note that a different choice of the path $\beta_p$ results in a conjugate element in $\pi_1(\MM, z_0)$. Furthermore, if the loops $\gamma_p$ (or equivalently, the paths $\beta_p$) intersect pairwise only at the basepoint $z_0$, then the set $\{[\gamma_p]:\, p\in \post(f)\}$ generates $\pi_1(\MM, z_0)$, and thus the corresponding set $\{g_p: \, p\in \post(f)\}$ generates $\IMG(f)$ and also topologically generates $G^{geom}(f)$. 

\begin{defn}\label{def: standard-generators}
    Given a collection $\{\gamma_p: p\in \post(f)\}$ of $p$-petals in $\MM=\rs\setminus \post(f)$, such that the loops $\gamma_p$ intersect pairwise only at the basepoint $z_0$, we call the set $\{[\gamma_p]:\, p\in \post(f)\}$ a \emph{standard generating set} for 
    $\pi_1(\MM, z_0)$. Furthermore, we refer to the corresponding set $\{g_p: p\in \post(f)\}$ of automorphisms in $\AT$, induced by the iterated monodromy action of $[\gamma_p]$'s on the tree $\bT_f$, as a \emph{standard generating set} for $\IMG(f)$ and $G^{geom}(f)$.
\end{defn}

The following proposition describes the (conjugacy) structure of the wreath recursions for the chosen generators.

\begin{prop}\label{prop: img-gen-structure}
    For each $p\in \post(f)$, let $[\gamma_p]\in \pi_1(\MM, z_0)$ be a $p$-petal and $g_p$ be the corresponding element of $\IMG(f)$. Fix $p\in \post(f)$, and suppose $f^{-1}(p)=\{q_1, \dots, q_s\}$, with $d_j := \deg(f,q_j)$ denoting the ramification index of $f$ at each $q_j$. Then the following statements hold: 
\begin{enumerate}[label=(\roman*),font=\normalfont]
    \item\label{item: img-gen-i} If $\sigma_{p}\in \Sym_d$ is the permutation representing the action of $g_p$ on the first level of $\sT_\sX$ and $\sigma_{p} =  \tau_1\cdots\tau_s$ is the disjoint cycle decomposition of $\sigma_{p}$, then (up to relabeling of the cycles) we have $|\supp(\tau_j)| = d_j$ for each $j=1,\dots, s$.
    \item\label{item: img-gen-ii} Suppose $\{x_1,\dots,x_s\}$ is an orbit transversal for the action of $\sigma_{p}$ on $\sX$ with each $x_j\in\supp(\tau_j)$, and consider the element $h\in \AT$ defined by the wreath recursion $h=\left(h_1\,,\, \dots \, , \, h_d\right) \sigma_h$, where 
    \[\text{$\sigma_h= \sigma_{p}$ \quad and \quad  
    $h_x= \begin{cases} g_{q_j}, \quad &\text{if $x=x_j$ for some cycle $\tau_j$ and $q_j\in \post(f)$}\\ \one, \quad &\text{otherwise}.\end{cases}$}\]
    Then the automorphisms $g_p$ and $h$ are conjugate in $\AT$.   
    \item\label{item: img-gen-iii} If the $p$-petals $[\gamma_p]$, $p\in P(f)$, generate $\pi_1(\MM, z_0)$, then the subgroup of $\Sym_d$ generated by $\{\sigma_p: p\in \post(f)\}$ acts transitively on $\sX$.

    \item\label{item: img-gen-iv} If the $p$-petals $[\gamma_p]$, $p\in P(f)$, form a standard generating set for $\pi_1(\MM,z_0)$, then there is an ordering of the generators $g_p$, $p\in \post(f)$, such that their product is the identity in~$\AT$.
\end{enumerate}
\end{prop}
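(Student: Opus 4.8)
The plan is to deduce all four statements from the local and global topology of the branched cover $f$, converted into wreath recursions through the self-similarity formula \eqref{eq: img-restriction} of Proposition~\ref{prop: img-self-similar}. For \ref{item: img-gen-i} I would argue locally near $p$. Since $z_0\in\MM$ is not a critical value, $f\colon \rs\setminus f^{-1}(\post(f))\to\MM$ is a genuine degree-$d$ covering, and over a small disk $D$ around $p$ the preimage $f^{-1}(D)$ splits into disjoint disks $D_1,\dots,D_s$ around $q_1,\dots,q_s$, with $f|_{D_j}$ topologically equivalent to $w\mapsto w^{d_j}$. Hence $f^{-1}(z_0)$ places exactly $d_j$ points inside $D_j$, and the monodromy of the core loop $\alpha_p$ cyclically permutes these $d_j$ points, producing a $d_j$-cycle; cycles coming from distinct $D_j$ have disjoint supports. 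As $\gamma_p=\beta_p\alpha_p\beta_p^{-1}$, conjugation by $\beta_p$ preserves cycle type, so $\sigma_p$ has cycle type $(d_1,\dots,d_s)$, which is \ref{item: img-gen-i}.

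The heart of the matter is \ref{item: img-gen-ii}, and this is where I expect the real work. I would compute the cyclic section products of $g_p$. Fixing a cycle $\tau_j=(x_1\,\cdots\,x_{d_j})$ with $x_1=x_j$, one telescopes the section formula \eqref{eq: img-restriction} along the cycle: the connecting paths $\ell_{x_i}$ cancel in consecutive pairs, leaving
$$(g_p)_{\tau_j}=g_p|_{x_1}\cdots g_p|_{x_{d_j}}=\big[\ell_{x_1}\,\delta\,\ell_{x_1}^{-1}\big],$$
where $\delta$ is the $f$-lift of $\gamma_p^{\,d_j}$ starting at $\Lambda(x_1)$. Because the local model at $q_j$ is $w\mapsto w^{d_j}$, encircling $p$ exactly $d_j$ times lifts to a closed loop $\delta$ that winds once around $q_j$; moreover $\delta\subset f^{-1}(\MM)\subseteq\MM$ since $\post(f)$ is forward-invariant. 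If $q_j\in\post(f)$, then $\delta$ is freely homotopic to a $q_j$-petal, so $(g_p)_{\tau_j}\sim_\AT g_{q_j}$; if $q_j\notin\post(f)$, then a small disk bounding $\delta$ avoids $\post(f)$, so $\delta$ is null-homotopic in $\MM$ and $(g_p)_{\tau_j}=\one$. By construction the element $h$ has a single nontrivial section per cycle, placed at $x_j$, so its cyclic section product over $\tau_j$ is just $h_{x_j}$, equal to $g_{q_j}$ or $\one$ accordingly. Matching first-level permutations and cyclic section products up to conjugacy, Proposition~\ref{prop: conjugacy-in-W} (equivalently, Lemma~\ref{lem: conjugacy-representative}) yields $g_p\sim_\AT h$.

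Statements \ref{item: img-gen-iii} and \ref{item: img-gen-iv} are global topological facts. For \ref{item: img-gen-iii}, the total space $\rs\setminus f^{-1}(\post(f))$ is connected, being a sphere with finitely many punctures, so the monodromy of the degree-$d$ covering $f$ acts transitively on $f^{-1}(z_0)$; since the $g_p$ generate $\IMG(f)$, their first-level images $\sigma_p$ generate precisely this transitive monodromy group. For \ref{item: img-gen-iv}, a standard generating set realizes the classical presentation of $\pi_1(\MM,z_0)$ for the punctured sphere: ordering the petals by the cyclic order in which the paths $\beta_p$ emanate from $z_0$, the product of the $[\gamma_p]$ bounds the complementary region, which contains no puncture of $\MM$ (every point of $\post(f)$, including $\infty$, is already encircled), so $\prod_p[\gamma_p]=1$ in $\pi_1(\MM,z_0)$. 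Applying $\Phi_f$ and transferring through $\Lambda$ gives $\prod_p g_p=\one$ in $\AT$ for this ordering.

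The principal obstacle is \ref{item: img-gen-ii}: one must carefully justify the telescoping cancellation of the connecting paths in \eqref{eq: img-restriction}, correctly identify the lifted loop $\gamma_p^{\,d_j}$ as a genuine $q_j$-petal through the local branched-cover model, and confirm the postcritical versus non-postcritical dichotomy that makes the section either $g_{q_j}$ or trivial. Once this computation is in place, the remaining parts reduce to standard covering-space theory together with the conjugacy criteria recorded in Section~\ref{recursive}.
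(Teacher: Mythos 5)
Your proposal is correct and takes essentially the same route as the paper's own proof: the local model $w\mapsto w^{d_j}$ near each $q_j$ for part~\ref{item: img-gen-i}, lifting $\gamma_p^{\,d_j}$ through the branched cover to identify the cyclic section product with either a $q_j$-petal or a null-homotopic loop for part~\ref{item: img-gen-ii}, connectivity of $\rs\setminus f^{-1}\big(\post(f)\big)$ for part~\ref{item: img-gen-iii}, the punctured-sphere relation for part~\ref{item: img-gen-iv}, and Proposition~\ref{prop: conjugacy-in-W} to conclude. Your telescoping of the connecting paths is exactly the paper's computation of $(g_p)_{\tau_j}=(g_p)^{d_j}|_{x_j}$ via Equation~\eqref{eq: img-restriction}, so there is nothing to add.
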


\begin{rmk}
    We emphasize that Parts~\ref{item: img-gen-i} and \ref{item: img-gen-ii} specify the wreath recursion structure of the generator $g_p$ only up to conjugacy in $\AT$. However, when $f$ is a PCF polynomial that admits an \emph{invariant spider} (that is, a tree $\mathcal{S}\subset \rs$ with a unique branching point at $\infty$ and leaves at the finite postcritical points, satisfying $\mathcal{S}\subset f^{-1}(\mathcal{S})$ up to isotopy relative to $\post(f)$), then Part~\ref{item: img-gen-ii} can be strengthened to an equality of $g_p$ and $h$ for $p\in P(f)\setminus\{\infty\}$. For further details, we refer the reader to \cite[Sec.~6.8 and 6.9]{Nekrashevych2005}; see also \cite[Chap.~5]{Hlu_Thesis} for a discussion of the rational case.
\end{rmk}

\begin{proof}[Proof of Proposition~\ref{prop: img-gen-structure}]
    Part~\ref{item: img-gen-iii} is immediate from the fact that the permutations $\sigma_p$, $p\in \post(f)$, generate the monodromy group of the covering $f\colon \rs\setminus f^{-1}\big(\post(f)\big)\to \rs\setminus \post(f)$. 

    Part~\ref{item: img-gen-iv} is implied by the fact that, when $\{[\gamma_p]:\, p\in \post(f)\}$ is a standard generating set of $\pi_1(\MM,z_0)$, 
    one can order the loops $\gamma_p$ so that their product forms a null-homotopic loop in~$\MM$.

    Now suppose that $\gamma_p=\beta_p\alpha_p\beta_p^{-1}$, where $\alpha_p$ is a small (positively oriented) circular loop around $p$ based at $z_p$, and $\beta_p\subset \MM$ is a path connecting $z_0$ to $z_p$. Note that $f$ acts locally as the power map $z\mapsto z^{d_j}$ at each preimage $q_j\in f^{-1}(p)$ (after a conformal change of coordinates in both domain and target). Hence, $f^{-1}(\alpha_p)$ has exactly $s=|f^{-1}(p)|$ connected components given by ``small'' topological circles  $\widetilde\alpha_{p,1},\dots,\widetilde\alpha_{p,s}$ around $q_1,\dots, q_s$, respectively, so that  $f\colon \widetilde\alpha_{p,j}\to\alpha_p$ is a covering of degree $d_j$ for each $j=1,\dots,s$. This immediately implies Part~\ref{item: img-gen-i}, which describes the monodromy action of $[\gamma_p]$. 
    
    To prove Part~\ref{item: img-gen-ii}, suppose that $x_j\in \supp(\tau_j)$ for a cycle $\tau_j$ in the disjoint cycle decomposition of~$\sigma_p$. Let $\widetilde\beta_{p,j}$ denote the $f$-lift of the path $\beta_p$ starting at $\Lambda(x_j)\in f^{-1}(z_0)$, and let $\widetilde{z}_{p,j}\in \widetilde \alpha_{p,j}$ be its endpoint. Using Equation~\eqref{eq: img-restriction}, we deduce that 
    \[
    (x_jv)\cdot [\gamma_p]^{d_j}=x_j(v\cdot[\ell_{x_j}\widetilde\beta_{p,j}\widetilde{\alpha}_{p,j}\widetilde\beta_{p,j}^{-1}\ell_{x_j}^{-1}]) \quad \text{for all $v\in \sX^*$},
    \]
    where we view $\widetilde{\alpha}_{p,j}$ as a (positively oriented) circular loop based at $\widetilde{z}_{p,j}$. Observe that the concatenated path $\ell_{x_j}\widetilde\beta_{p,j}\widetilde{\alpha}_{p,j}\widetilde\beta_{p,j}^{-1}\ell_{x_j}^{-1}$ is either 
    \begin{itemize}
        \item a $q_j$-petal in $\MM$ when $q_j\in\post(f)$, or
        \item a null-homotopic loop in $\MM$ when $q_j\notin \post(f)$.
    \end{itemize}
    Hence, the cyclic section product $(g_p)_{\tau_j}=(g_p)^{d_j}|_{x_j}$ is a $W$-conjugate of $g_{q_j}$ in the former case, and equals $\one$ in the latter case. The statement of Part~\ref{item: img-gen-ii} now follows from Proposition~\ref{prop: conjugacy-in-W}. This completes the proof of Proposition~\ref{prop: img-gen-structure}. 
\end{proof}

The following statement is an immediate corollary of Part~\ref{item: img-gen-i} in the proposition above.

\begin{cor}\label{cor: IMG(poly) has odometer}
If $f$ is a polynomial and $[\gamma_\infty]$ is an $\infty$-petal, then the corresponding element $g_\infty\in \IMG(f)$ is an odometer, that is, $g_\infty$ acts transitively on each level of the tree $\sT_\sX$.
\end{cor}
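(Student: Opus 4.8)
The plan is to show directly that, for every $n \geq 1$, the element $g_\infty$ acts as a single $d^n$-cycle on the $n$-th level of $\sT_\sX$; since a full cycle on $d^n$ points is transitive, this yields transitivity on every level, which is exactly the definition of an odometer (Definition~\ref{defn-odometer}). The point is that the argument establishing Part~\ref{item: img-gen-i} of Proposition~\ref{prop: img-gen-structure} applies verbatim with the iterate $f^n$ in place of $f$. Indeed, by construction the action of $g_\infty$ on the $n$-th level $f^{-n}(z_0)$ of $\bT_f$ is precisely the monodromy action of the $\infty$-petal $[\gamma_\infty]$ for the covering \eqref{eq: fn-cover}, that is, for $f^n \colon \rs \setminus f^{-n}(\post(f)) \to \MM$.

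The key geometric input I would record is that $\infty$ is a totally ramified fixed point of $f$: since $f$ is a polynomial of degree $d$, we have $f(\infty) = \infty$, $f^{-1}(\infty) = \{\infty\}$, and $\deg(f,\infty) = d$. Iterating and using the multiplicativity of ramification indices, $\deg(f^n, \infty) = \prod_{i=0}^{n-1}\deg\big(f, f^i(\infty)\big) = d^n$, while $(f^n)^{-1}(\infty) = \{\infty\}$ remains a single point. Thus, in the notation of the proof of Part~\ref{item: img-gen-i} applied to $f^n$, the preimage $(f^n)^{-1}(\alpha_\infty)$ of a small circle $\alpha_\infty$ around $\infty$ has a single connected component, which maps onto $\alpha_\infty$ as a covering of degree $d^n$. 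Consequently the monodromy permutation of $[\gamma_\infty]$ on the fiber $f^{-n}(z_0)$, which has $|f^{-n}(z_0)| = \deg(f^n) = d^n$ points, is a single cycle of length $d^n$, hence transitive.

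The argument involves no real obstacle; the only point worth checking with care is the identification of the level-$n$ action of $g_\infty$ with the monodromy for $f^n$, together with the totally ramified behaviour of $\infty$ under every iterate, both of which are routine. As a cross-check, one can also argue recursively using Parts~\ref{item: img-gen-i} and~\ref{item: img-gen-ii}: these give $g_\infty \sim_\AT (h_1,\dots,h_d)\,\sigma_\infty$ with $\sigma_\infty$ a $d$-cycle and the cyclic section product conjugate to $g_\infty$ itself, since $f^{-1}(\infty) = \{\infty\}$ with $\infty \in \post(f)$. Using that $\operatorname{ord}\big(g|_{\sT_{n+1}}\big) = d\cdot \operatorname{ord}\big((h_1\cdots h_d)|_{\sT_n}\big)$ for any $g = (h_1,\dots,h_d)\sigma$ with $\sigma$ a $d$-cycle, and that $h_1\cdots h_d \sim_\AT g_\infty$ has the same order on $\sT_n$ as $g_\infty$, one obtains by induction that $g_\infty|_{\sT_n}$ has order $d^n$ for all $n$, which is equivalent to $g_\infty$ being an odometer.
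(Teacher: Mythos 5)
Your proof is correct and is essentially the paper's argument spelled out: the paper simply declares the corollary an immediate consequence of Part~\ref{item: img-gen-i} of Proposition~\ref{prop: img-gen-structure}, and your main argument (the level-$n$ action of $g_\infty$ is the monodromy of $[\gamma_\infty]$ for the covering $f^n$, with $\infty$ totally ramified and $(f^n)^{-1}(\infty)=\{\infty\}$, so the permutation is a single $d^n$-cycle) is exactly the justification left implicit there. Your recursive cross-check via Part~\ref{item: img-gen-ii} is also sound, with the minor caveat that the identity $\operatorname{ord}\big(g|_{\sT_{n+1}}\big)=d\cdot\operatorname{ord}\big((h_1\cdots h_d)|_{\sT_n}\big)$ should be stated with the sections multiplied in the cyclic order of $\sigma$ rather than in index order --- harmless here since all but one section of the conjugate of $g_\infty$ is trivial.
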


\section{Model generators and groups}\label{Section:Model gps}

Let $f\in \C(z)$ be a PCF polynomial of degree $3$. By the discussion in Sections~\ref{subsec: img-discr} and \ref{subsec-generators}, throughout this and the subsequent sections, we identify the ternary rooted tree $T$, whose vertices are the solutions of the equations $f^n(z) - t=0$, with the tree of words $\sT_\sX$ in the alphabet $\sX=\{1,2,3\}$. Accordingly, we view the profinite geometric iterated monodromy group $G^{\mathrm{geom}}(f)$ as a (closed) subgroup of the automorphism group $\AT = \Aut(\sT_\sX)$. For notational convenience, we will simply write $T$ in place of $\sT_\sX$.

\subsection{\ref{main-assumption}-restricted model groups}

In this subsection, we introduce \emph{model subgroups} of $\AT$ that we will utilize to study the profinite geometric iterated monodromy groups of PCF cubic polynomials. The key idea is that the \emph{model generators}, i.e., the generators of these model groups, are prescribed by the combinatorial structure of the ramification portrait of $f$; see Definition~\ref{defn-ramification-portrait} and Remark~\ref{rem: can-model-group}. We begin by recording the following immediate corollary of Propositions~\ref{prop: conjugacy-in-W} and~\ref{prop: img-gen-structure}, which describes the wreath recursion structure of a standard generating set of $G^{geom}(f)$ up to conjugacy.

\begin{cor}\label{cor: cubic-model-generators}
    Let $f\in \C(z)$ be a PCF cubic polynomial with the critical set $C:=\crit(f)$ and postcritical set $P:=\post(f)$. Suppose $\{g_p: p\in P\}$ is a  standard generating set for the profinite geometric iterated monodromy group $G^{geom}(f)$ induced by $p$-petals in $\rs\setminus P$. Then any subset of $|P|-1$ standard generators topologically generates the group $G^{geom}(f)$. Moreover, each generator $g_p$ is a $\AT$-conjugate of (exactly) one of the following nine automorphisms: 
    
     \begin{tabular}{@{}lll}
    %\begin{align*}
    %\begin{array}{lll}   
    \caseitem{1a}  &   $(\,\,\;\;\one,\,\, \;\;\one, \,\;\; \one\,)(1\, 2\, 3)$ & \text{if $f^{-1}(p)=\{q_1\}$ for some $q_1\in C\setminus P$;}\\
    \caseitem{1b}  &    $(\,g_{q_1}, \,\, \;\;\one, \,\;\;\one \,)(1\, 2\, 3)$ & \text{if $f^{-1}(p)=\{q_1\}$ for some $q_1\in C\cap P$;} \\
    \caseitem{2a}   &  $(\,\,\;\;\one,\,\,\;
    \;\one, \,\;\; \one\,)(1\, 2)$ & \text{if $f^{-1}(p)=\{q_1,q_2\}$ for $q_1\in C\setminus P$ and $q_2\notin C\bigcup P$;} \\
    \caseitem{2b}  &  $(\,g_{q_1}, \,\, \;\;\one, \,\;\;\one\,)(1\, 2)$ & \text{if $f^{-1}(p)=\{q_1,q_2\}$ for $q_1\in C\cap P$ and $q_2\notin C\bigcup P$;} \\
    \caseitem{2c}  &     $(\,\,\;
     \;\one, \,\,\;\;\one, \, g_{q_2})(1\, 2)$ &\text{if $f^{-1}(p)=\{q_1,q_2\}$ for $q_1\in C\setminus P$ and $q_2\in P\setminus C $;} \\
    \caseitem{2d}  &    $(\,g_{q_1},\,\,\;\; \one,\,g_{q_2})(1\, 2)$ &\text{if $f^{-1}(p)=\{q_1,q_2\}$ for $q_1\in C\cap P$ and $q_2\in P\setminus C$;} \\
       \caseitem{3a} &   $(\,g_{q_1}, \,\,\;\; \one, \,\;\;\one \,)$ &\text{if $f^{-1}(p)=\{q_1,q_2,q_3\}$ for $q_1\in P\setminus C$ and $q_2,q_3\notin C\bigcup P$;}\\
       \caseitem{3b}  &   $(\,g_{q_1}, \,g_{q_2}, \,\;\;\one\,)$ & \text{if $f^{-1}(p)=\{q_1,q_2,q_3\}$ for $q_1,q_2\in P\setminus C$ and $q_3\notin C\bigcup P$;}\\
       \caseitem{3c}  &   $(\,g_{q_1},\, g_{q_2}, \,g_{q_3})$ & \text{if $f^{-1}(p)=\{q_1,q_2,q_3\}$ for $q_1,q_2,q_3\in P\setminus C$.}
    %\end{array}
    %\end{equation*}
    \end{tabular}
\end{cor}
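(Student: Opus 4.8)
The plan is to derive both assertions directly from Proposition~\ref{prop: img-gen-structure}, invoking the conjugacy criterion of Proposition~\ref{prop: conjugacy-in-W} only to normalize the first-level data.

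For the generation statement, I would apply Proposition~\ref{prop: img-gen-structure}\ref{item: img-gen-iv}: since the chosen petals form a standard generating set, there is an ordering $p_1,\dots,p_{|P|}$ of $\post(f)$ with $g_{p_1}g_{p_2}\cdots g_{p_{|P|}}=\one$ in $\AT$. Because each $g_p$ occurs exactly once in this single relation, we may solve it for any one generator, expressing that generator as a finite word in the remaining $|P|-1$ generators. Hence each $g_p$ lies in the abstract subgroup, and therefore in the closed subgroup, generated by the others. As $\{g_p:p\in P\}$ topologically generates $G^{geom}(f)$, deleting any single element leaves a topological generating set, which is precisely the claim that any subset of size $|P|-1$ topologically generates $G^{geom}(f)$.

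For the classification, fix $p\in\post(f)$ and write $f^{-1}(p)=\{q_1,\dots,q_s\}$ with $d_j=\deg(f,q_j)$, so that $\sum_j d_j=3$. The multiset $\{d_j\}$ is then one of $\{3\}$, $\{2,1\}$, or $\{1,1,1\}$, and by Proposition~\ref{prop: img-gen-structure}\ref{item: img-gen-i} the first-level permutation $\sigma_p$ has the matching cycle type: a single $3$-cycle, a transposition with one fixed point, or the identity, respectively. Since a point is critical precisely when its ramification index exceeds $1$, the partition of $3$ also fixes which preimages lie in $C=\crit(f)$: in the $\{3\}$ case $q_1\in C$; in the $\{2,1\}$ case exactly the degree-$2$ preimage is critical; in the $\{1,1,1\}$ case none is. I would then invoke Proposition~\ref{prop: img-gen-structure}\ref{item: img-gen-ii}, which produces a conjugate $h\sim_\AT g_p$ whose transversal sections equal $g_{q_j}$ when $q_j\in\post(f)$ and $\one$ otherwise, all remaining sections being trivial. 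Splitting each partition type according to which preimages are postcritical yields exactly the nine configurations listed; a final permutation conjugacy, legitimate by Proposition~\ref{prop: conjugacy-in-W}, places the transposition on $\{1,2\}$, the fixed point at $3$, and the nontrivial sections in the displayed coordinates, giving the stated wreath recursions.

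The word ``exactly'' is handled by mutual exclusivity: the combinatorial data attached to $p$ — the partition of $3$ recorded by $\sigma_p$ together with the $C$/$P$-membership of each preimage — selects a unique case, and distinct cases are separated by conjugacy invariants (the cycle type of $\sigma_p$, and the number and positions of nontrivial cyclic section products), which are precisely the data read off by Proposition~\ref{prop: conjugacy-in-W}. I expect the work to be organizational rather than conceptual: the only points requiring care are the bookkeeping that the partition of $3$ forces the critical structure of the preimages (so that, for instance, the degree-$1$ preimage in the $\{2,1\}$ case is never critical) and the verification that the displayed coordinate placements are valid conjugacy representatives. Neither is a genuine obstacle once Propositions~\ref{prop: img-gen-structure} and~\ref{prop: conjugacy-in-W} are available.
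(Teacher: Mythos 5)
Your proposal follows the same route as the paper: the corollary is stated there as an immediate consequence of Propositions~\ref{prop: img-gen-structure} and~\ref{prop: conjugacy-in-W}, and your use of Part~\ref{item: img-gen-iv} for the generation claim and of Parts~\ref{item: img-gen-i}--\ref{item: img-gen-ii} plus the conjugacy criterion for the normalization is exactly the intended argument.

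One bookkeeping slip is worth fixing, since it concerns the exhaustiveness of the nine cases. Splitting the three partition types of $3$ according to the $C$/$P$-membership of the preimages does \emph{not} yield exactly nine configurations: it yields ten. In the unramified case $f^{-1}(p)=\{q_1,q_2,q_3\}$ with all $d_j=1$, counting how many preimages lie in $P$ gives four a priori possibilities ($0$, $1$, $2$, or $3$ postcritical preimages), whereas only three (Cases~\ref{case:3a}--\ref{case:3c}) appear in the list. The configuration with \emph{no} preimage in $C\cup P$ must be excluded, and this requires an (easy but genuine) observation that you do not state: every $p\in P$ has at least one preimage in $C\cup P$. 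Indeed, $p=f^n(c)$ for some $c\in \crit(f)$ and $n\geq 1$, so $f^{n-1}(c)\in f^{-1}(p)$ lies in $C$ (if $n=1$) or in $P$ (if $n\geq 2$). Without this, your enumeration would have to admit a tenth case with $g_p\sim_\AT(\one,\one,\one)$, i.e., a trivial generator, which does not occur. (Note that in Cases~\ref{case:1a} and~\ref{case:2a} the fiber does meet $C\cup P$, through the critical preimage, so no analogous exclusion is needed there.) With this one line added, your argument is complete and coincides with the paper's.
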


\begin{rmk} In particular, since $f$ is a polynomial, $\infty \in C\cap P$ and the corresponding standard generator $g_\infty$ falls under Case~\ref{case:1b}. Moreover, if $f$ satisfies Assumption~\ref{main-assumption}, then for each finite postcritical point $p$, the associated standard generator $g_p$ falls into one of the following cases:
\begin{itemize}
    \item Case~\ref{case:2a} or ~\ref{case:2b}, when $p$ is the image of a critical point;
    \item Case~\ref{case:3a} or ~\ref{case:3b}, otherwise.  
\end{itemize}
\end{rmk}

We now introduce model subgroups of $\AT$ for PCF cubic polynomials $f$ satisfying Assumption~\ref{main-assumption}. 

\begin{defn}\label{defn-group}
    Let $r\geq 0$ be an integer. A \ref{main-assumption}\emph{-restricted} (\emph{cubic polynomial}) \emph{model group} is a profinite subgroup \[G=\llangle a,b, c_1,\dots,c_r\rrangle\] of $\AT=\Aut(T)$ topologically generated by $(r+2)$ recursively defined elements
    \begin{align}\label{eq: top-generators}\begin{matrix}
    a&=&(x,\,\one,\,\one)(1\,2),& \\
    b&=&(\one,\,\one,\,y)(2\,3), &\\
    c_i&=&(c_{i,1},\,c_{i,2},\,c_{i,3}), & 1 \leq i \leq r,
    \end{matrix}\end{align}
    that satisfy the following conditions:
    \begin{enumerate}[label= (Y\arabic*)]
    \item\label{cond:Y1} all sections $x,y,c_{i,j}$ are in $\{\one,a,b,c_1,\dots,c_r\}$;
    \item\label{cond:Y2} each generator $\ell \in \{a,b,c_1,\dots,c_r\}$ occurs exactly once among the sections $x,y,c_{i,j}$;
    \item\label{cond:Y3} for each $i=1,\dots, r$, at least one of the sections $c_{i,1}$, $c_{i,2}$, or $c_{i,3}$ is trivial;
    \item\label{cond:Y4} for each $i=1,\dots, r$, there exists some $v\in \sX^*$ such that $c_i|_v=a$ or $c_i|_v=b$.
    \end{enumerate}
    We call the elements $a,b, c_1,\dots,c_r$ as above the \emph{model generators} of $G$.
\end{defn}

\begin{rmk}\label{rmk-23}
    We note that the choice of the permutations $(1\, 2)$ and $(2\, 3)$ as the first-level actions of $a$ and $b$, respectively, is customary and ensures that $G$ acts transitively on $\sX$; compare Part~\ref{item: img-gen-iii} of Proposition~\ref{prop: img-gen-structure}. Furthermore, Condition~\ref{cond:Y4} guarantees that none of the generators $c_i$ is trivial.
\end{rmk}

The following statement now easily follows from Proposition~\ref{prop: promoting-recursion-conjugacy} and Corollary~\ref{cor: cubic-model-generators}. 

\begin{prop}\label{prop: model-group-from-map}
    Let $f\in\C(z)$ be a PCF cubic polynomial satisfying Assumption~{\normalfont \ref{main-assumption}} with $P:=\post(f)$, and let $\{g_p: p\in P\}$ be a standard generating set of $G^{geom}(f)$ induced by $p$-petals in $\rs\setminus P$. Then there is a {\normalfont \ref{main-assumption}}-restricted model group topologically generated by a set $\{ \widehat{g}_p: p\in P\setminus \{\infty\}\}\subset \AT$ such that $\widehat g_p \sim_\AT g_p$ for each $p\in P\setminus\{\infty\}$.
\end{prop}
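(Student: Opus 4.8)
The plan is to construct the model group directly from the wreath recursions given by Corollary~\ref{cor: cubic-model-generators}, and then invoke Proposition~\ref{prop: promoting-recursion-conjugacy} to upgrade the conjugacy-level description into honest generators $\widehat g_p$ that are $\AT$-conjugate to the standard generators $g_p$. First I would observe that, by Corollary~\ref{cor: cubic-model-generators}, each standard generator $g_p$ with $p \in P \setminus \{\infty\}$ is $\AT$-conjugate to one of the automorphisms listed in Cases~\ref{case:2a}, \ref{case:2b}, \ref{case:3a}, \ref{case:3b} (using the remark following that corollary, which tells us precisely which cases arise under Assumption~\ref{main-assumption}). These representatives already have the transposition-or-trivial first-level structure required by~\eqref{eq: top-generators}: the generators coming from images of finite critical points act as a transposition on the first level (hence become the $a$- or $b$-type generators), while the remaining generators act trivially on the first level (hence become the $c_i$-type generators). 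Since any $|P|-1$ standard generators topologically generate $G^{geom}(f)$, I would drop $g_\infty$ and work with $\{g_p : p \in P \setminus \{\infty\}\}$.

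Next I would set up the recursive system \eqref{eq: rec-syst-conj} whose right-hand sides are exactly the Case~\ref{case:2a}--\ref{case:3b} representatives, with each $g_{q_j}$ appearing as a section read off from the preimage structure $f^{-1}(p) = \{q_1,\ldots\}$. The key point is that these recursions satisfy the structural conditions \ref{cond:Y1}--\ref{cond:Y4}. Condition~\ref{cond:Y1} is immediate since every section is either trivial or one of the $g_{q_j}$. For Condition~\ref{cond:Y2} I would use that $f$ is a degree-$3$ polynomial satisfying Assumption~\ref{main-assumption}: each finite postcritical point $p \neq \infty$ appears as $q_j$ in the preimage data of exactly one other postcritical point (namely $f(p)$), so each generator occurs exactly once among all the sections — this is precisely where Assumption~\ref{main-assumption} and the structure of the ramification portrait are used. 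Conditions~\ref{cond:Y3} and~\ref{cond:Y4} follow respectively from the fact that in Cases~\ref{case:3a} and~\ref{case:3b} at least one section is trivial (as $f^{-1}(p)$ always contains a point outside $C \cup P$ under Assumption~\ref{main-assumption}), and from the fact that iterating the sections eventually reaches a generator acting as a transposition (i.e.\ an $a$- or $b$-type generator) because the critical orbits are finite and terminate at periodic or preperiodic structure mapping into $\{a,b\}$.

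Having verified that the recursions define a \ref{main-assumption}-restricted model group, I would check Condition~\ref{cond: promoting-recursion-conjugacy} of Proposition~\ref{prop: promoting-recursion-conjugacy}: for each generator and each cycle $\tau$ in the disjoint cycle decomposition of its first-level permutation, the corresponding cyclic section product must be conjugate in $\AT[g_1,\dots,g_k]$ to a power of a single generator. For the $a$- and $b$-type generators this is clear, since the transposition $\tau$ has a two-element support whose section product is a single $g_{q_j}$ (or $\one$), and the fixed point contributes a single section; for the $c_i$-type generators the first-level permutation is trivial, so each cycle is a fixed point and the section product is literally one generator or $\one$. With Condition~\ref{cond: promoting-recursion-conjugacy} confirmed, Proposition~\ref{prop: promoting-recursion-conjugacy}, applied with $\phi(g_p) := g_p$ (the actual standard generators) on one side and the recursively defined model generators $\widehat g_p$ on the other, gives the equivalence \ref{item: promoting-recursion-conjugacy-i} $\Leftrightarrow$ \ref{item: promoting-recursion-conjugacy-ii}; since \ref{item: promoting-recursion-conjugacy-i} holds by Corollary~\ref{cor: cubic-model-generators}, we conclude $\widehat g_p \sim_\AT g_p$ for each $p \in P \setminus \{\infty\}$, as desired.

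\textbf{The main obstacle} I anticipate is the bookkeeping in Condition~\ref{cond:Y2}: verifying that each generator occurs \emph{exactly} once among all the sections. This requires a careful double-counting argument tying the sections appearing in the recursions to the edges of the ramification portrait, and one must handle correctly the interplay between finite critical points that are themselves postcritical (Cases~\ref{case:2b}, \ref{case:3b}) and the role played by excluding $g_\infty$ from the generating set. One must confirm that removing $\infty$ removes exactly the ``extra'' occurrence so that the count balances to one per generator — this is the combinatorial heart of the statement and where Assumption~\ref{main-assumption} is genuinely needed.
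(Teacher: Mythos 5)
Your proposal is correct and follows essentially the same route as the paper: build the model generators recursively from the preimage data encoded in Corollary~\ref{cor: cubic-model-generators} (Cases~\ref{case:2a}, \ref{case:2b}, \ref{case:3a}, \ref{case:3b}), check Conditions~\ref{cond:Y1}--\ref{cond:Y4}, and then combine Proposition~\ref{prop: promoting-recursion-conjugacy} with Corollary~\ref{cor: cubic-model-generators} to get $\widehat g_p\sim_\AT g_p$. The obstacle you flag at the end is in fact immediate: since $f$ is a polynomial, $f^{-1}(\infty)=\{\infty\}$, so $g_\infty$ appears as a section only of itself and each finite $p$ appears exactly once, namely in the recursion of $\widehat g_{f(p)}$ with $f(p)$ finite, so Condition~\ref{cond:Y2} holds with no delicate counting (Assumption~\ref{main-assumption} is really needed for Condition~\ref{cond:Y3} and for the two distinct finite critical values giving the $a$- and $b$-type generators).
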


\begin{proof}
    We describe first how to associate a \ref{main-assumption}-restricted model group with the given map~$f$. By Assumption~\ref{main-assumption}, $f$ has two finite critical points with distinct images, say $p_a$ and $p_b$.

Let us consider $|P|-1$ automorphisms $\widehat{g}_p\in \AT$ with $p\in P\setminus \{\infty\}$ recursively defined in the following way:
\begin{itemize}
    \item For the critical value $p_a$, we have $0 \leq |f^{-1}(p_a)\cap P|\leq 1$, and we set
     \[
    \widehat{g}_{p_a}= \begin{cases}
    (\,\,\;\;\one, \;\; \one, \;\; \one ) (1\, 2), \quad\text{if $f^{-1}(p_a)\cap P = \emptyset$}\\
    (\,\widehat{g}_{q_1},\;\; \one, \;\; \one ) (1\, 2)
    , \quad \text{if $f^{-1}(p_a)\cap P = \{q_1\}.$}
    \end{cases}
    \]   
    \item Similarly, for the critical value $p_b$, we have $0 \leq |f^{-1}(p_b)\cap P|\leq 1$, and we set 
    \[
    \widehat{g}_{p_b}= \begin{cases}
    (\,\;\;\one, \;\; \one, \;\; \,\one ) (2\, 3), \quad\text{if $f^{-1}(p_b)\cap P = \emptyset$}\\
    (\,\;\;\one,\;\; \one, \,\widehat{g}_{q_1} ) (2\, 3)
    , \quad \text{if $f^{-1}(p_b)\cap P = \{q_1\}$.}
    \end{cases}
    \]   
    \item Finally, for each $p\in P\setminus \{p_a,p_b,\infty\}$, we have $1\leq |f^{-1}(p)\cap P|\leq 2$, and we set 
    \[
    \widehat{g}_p= \begin{cases}
    (\,\widehat{g}_{q_1},\,\;\; \one, \;\; \one ), \quad\quad \text{if $f^{-1}(p)\cap P = \{q_1\}$}\\
    (\,\widehat{g}_{q_1},\, \widehat{g}_{q_2}, \,\;\one)
    , \quad\quad \text{if $f^{-1}(p)\cap P = \{q_1,q_2\}$.}
    \end{cases}
    \]
\end{itemize}

It is straightforward to verify that the constructed automorphisms $\widehat{g}_p$ satisfy Conditions~\ref{cond:Y1}--\ref{cond:Y4} (with $a=g_{p_a}$ and $b=g_{p_b}$). In particular, each automorphism $\widehat{g}_p$ occurs as a section of $\widehat{g}_{f(p)}$ and of no other element from the set $S^{model}(f):=\{\widehat{g}_p: p\in P\setminus \{\infty\}\}$. Hence, the subgroup $G^{model}(f)\subset \AT$ topologically generated by $S^{model}(f)$ is a \ref{main-assumption}-restricted model group. Furthermore, it follows directly from Proposition~\ref{prop: promoting-recursion-conjugacy} and Corollary~\ref{cor: cubic-model-generators} that each $\widehat{g}_p$ is a $\AT$-conjugate of the standard generator $g_p$ of $G^{geom}(f)$ induced by a $p$-petal in $\rs\setminus P$. This finishes the proof of Proposition~\ref{prop: model-group-from-map}.
\end{proof}

\begin{rmk}\label{rem: can-model-group}
We emphasize that the proof of Proposition~\ref{prop: model-group-from-map} assigns a \emph{canonical} \linebreak \ref{main-assumption}-restricted model group $G^{model}(f)$ to a given PCF cubic polynomial $f$ (satisfying Assumption~\ref{main-assumption}), up to relabeling the postcritical points of $f$. In fact, $G^{model}(f)$ is determined by the ramification portrait of $f$. Conversely, for each \ref{main-assumption}-restricted model group $G=\llangle a,b,c_1,\dots,c_r\rrangle$ as in Definition~\ref{defn-group}, one can find a PCF cubic polynomial $f\in \C(z)$ satisfying Assumption~\ref{main-assumption} and a bijection $$\varphi\colon \{g_p: p\in P(f)\setminus \{\infty\}\} \to \{a,b,c_1,\dots,c_r\}$$ between the standard generators of $G^{geom}(f)$ and the model generators of $G$ such that \[g_p \sim_\AT \varphi(g_p) \quad  \text{for each $p\in P(f)\setminus \{\infty\}$}.\] This follows readily from the work of Floyd et al. \cite{Floyd2022}, which shows that any \emph{abstract polynomial portrait} is realized as a ramification portrait of some PCF polynomial. 
\end{rmk}

\subsection{Basic properties of model groups}
We collect here some basic properties of \ref{main-assumption}-restricted model groups that will be needed for our study of invariable generation in Section~\ref{sec: inv-generation}, particularly in the proofs of Theorems~\ref{thm-conjugates} and~\ref{thm-simultaneous-conjugation}.

\begin{lemma}\label{lemma-model-groups}
    Let $G=\llangle a, b, c_1,\dots, c_r\rrangle \subset \AT$ be a {\normalfont \ref{main-assumption}}-restricted model group as in Definition~\ref{defn-group}, and let $\Gamma := \langle a,b, c_1,\ldots,c_r\rangle$ be the corresponding countable dense subgroup. Then the following is true:
    \begin{enumerate}[label=(\roman*),font=\normalfont]
    \item\label{item: model-i} $\Gamma$ and $G$ are self-similar subgroups of $\AT$.
    \item\label{item: model-ii} $\Gamma$ and $G$ contain an odometer. In fact, the product of all their (topological) generators $a,b, c_1,\ldots,c_r$ in any order is an odometer. 
    \item\label{item: model-iii}
    We have $[a,b]=a^{-1}b^{-1}ab=(\one, \, \one, \one) (1\,2\, 3) \in \Gamma\subset G$.
    \end{enumerate}
\end{lemma}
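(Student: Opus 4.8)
The plan is to verify all three items directly from the defining conditions \ref{cond:Y1}--\ref{cond:Y4} of a \ref{main-assumption}-restricted model group, using the wreath-recursion machinery from Section~\ref{recursive}. Item~\ref{item: model-i} is the most immediate: self-similarity of $\Gamma$ follows from Lemma~\ref{EXDOC: lem: G is self-similar}\ref{item: rec-syst-ii}, since Condition~\ref{cond:Y1} guarantees that every first-level section of every generator $a,b,c_1,\dots,c_r$ lies in $\{\one,a,b,c_1,\dots,c_r\}$, i.e.\ the generators are defined by a self-similar recursive system of the form~\eqref{eq: rec-syst}. The same lemma then gives self-similarity of the closure $G=\overline{\Gamma}$.

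For item~\ref{item: model-iii}, I would simply compute the wreath recursion of the commutator $[a,b]=a^{-1}b^{-1}ab$ from the prescribed forms $a=(x,\one,\one)(1\,2)$ and $b=(\one,\one,y)(2\,3)$, using the multiplication rule in $\AT\wr\Sym_3$ and the section formula $(gh)|_v=(g|_v)(h|_{v\cdot g})$ from Section~\ref{recursive}. First I would note that the first-level permutation is $\sigma_a^{-1}\sigma_b^{-1}\sigma_a\sigma_b=(1\,2)(2\,3)(1\,2)(2\,3)=(1\,2\,3)$, so $[a,b]$ acts as a $3$-cycle on the first level. Then I would track the three first-level sections of $[a,b]$ and check that the single occurrences of $x$ and $y$ (one each, with $x,y$ each equal to some generator by Condition~\ref{cond:Y2}) cancel against their inverses coming from $a^{-1}$ and $b^{-1}$, so that all three sections are trivial. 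This yields $[a,b]=(\one,\one,\one)(1\,2\,3)\in\Gamma\subset G$ exactly as claimed.

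For item~\ref{item: model-ii}, the strategy is to use the characterization of odometers from the paragraph after Definition~\ref{defn-odometer}: an element $g$ is an odometer iff for every cycle $\tau$ in the first-level decomposition of $g$ the cyclic section product $g_\tau$ is again an odometer, and the first-level action itself is a $d$-cycle (acts transitively on level $1$). I would let $w$ be the product of $a,b,c_1,\dots,c_r$ in an arbitrary fixed order. By Conditions~\ref{cond:Y1} and~\ref{cond:Y2}, every generator appears exactly once as a section among the $x,y,c_{i,j}$; the key combinatorial observation is that, since each generator is ``consumed'' exactly once as a section of a unique other generator, the cyclic section product of $w$ over its unique first-level cycle is again, up to the labeling of the starting letter, a product in which every generator occurs exactly once. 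The first-level permutation of $w$ is the product of the $\sigma$'s, which (as the $c_i$ act trivially on level~$1$ and $a,b$ contribute $(1\,2)$ and $(2\,3)$) is a full $3$-cycle, so $w$ is transitive on level~$1$; combined with the inductive self-reproduction of the ``each generator once'' structure in the cyclic section product, an induction on the level shows $w$ is transitive on every level, hence an odometer.

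The main obstacle will be item~\ref{item: model-ii}: unlike the direct computations in \ref{item: model-i} and \ref{item: model-iii}, the odometer claim requires a genuine inductive argument, and one must argue carefully that the cyclic section product of $w$ over its first-level $3$-cycle is again a product containing each of $a,b,c_1,\dots,c_r$ exactly once (possibly reordered, and with the starting letter determined by the cycle), so that the inductive hypothesis applies at the next level. The bookkeeping of \emph{which} section lands where — controlled precisely by Conditions~\ref{cond:Y1} and~\ref{cond:Y2} asserting that each generator occurs as a section exactly once — is the crux, and I would want to phrase it so that transitivity on level $n$ for the abstract ``each-generator-once'' product implies transitivity on level $n+1$, with the base case being transitivity on level~$1$.
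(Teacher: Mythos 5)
Your proposal is correct, and for parts \ref{item: model-i} and \ref{item: model-iii} it coincides with the paper's proof: self-similarity is immediate from Lemma~\ref{EXDOC: lem: G is self-similar} because Condition~\ref{cond:Y1} puts the recursion \eqref{eq: top-generators} in the form \eqref{eq: rec-syst}, and the commutator identity is the same direct wreath-recursion computation (one finds $a^{-1}=(\one,x^{-1},\one)(1\,2)$ and $b^{-1}=(\one,y^{-1},\one)(2\,3)$, after which the single occurrences of $x$ and $y$ cancel against $x^{-1}$ and $y^{-1}$, giving $[a,b]=(\one,\one,\one)(1\,2\,3)$). For part \ref{item: model-ii}, however, you take a genuinely different route: the paper simply observes that the model generators specify a \emph{kneading automaton} in the sense of \cite[Def.~6.7.4]{Nekrashevych2005} and invokes \cite[Cor.~6.7.7]{Nekrashevych2005}, which states that the product of the generators of such an automaton in any order is an odometer. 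Your replacement is a self-contained induction, and it does work: if $w$ is a product of all $r+2$ generators, each occurring once, then $\sigma_w$ is a $3$-cycle (the $c_i$ act trivially on level $1$, and $(1\,2)$ and $(2\,3)$ multiply to a $3$-cycle in either order), and the cyclic section product $w_\tau=w^3|_{x}$ expands as the product of all $3(r+2)$ first-level sections of the factors; for each fixed factor $\ell_j$ the three sections are taken at three distinct letters, since the intermediate vertex in the $m$-th block is $x\cdot\sigma_w^m\cdot(\ell_1\cdots\ell_{j-1})$ and right multiplication by the fixed permutation $\ell_1\cdots\ell_{j-1}$ is a bijection of $\sX$. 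By Conditions~\ref{cond:Y1} and~\ref{cond:Y2}, exactly one of these $3(r+2)$ sections equals each generator and all the others are trivial, so $w_\tau$ is again an each-generator-once product, and your induction over the whole class of such products (with base case transitivity on level~$1$) closes, using the standard fact that $w$ is transitive on level $n$ if and only if $\sigma_w$ is transitive on level $1$ and $w^{3}|_x$ is transitive on level $n-1$ — a fact you should state and justify explicitly, as the paper only records the equivalent order criterion after Definition~\ref{defn-odometer}. The trade-off: the paper's citation is shorter and places the result in the general framework of kneading automata; your argument is elementary, avoids the external reference, and makes visible exactly where Conditions~\ref{cond:Y1}--\ref{cond:Y2} enter.
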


\begin{proof}
    Part~\ref{item: model-i} is an immediate consequence of Lemma~\ref{EXDOC: lem: G is self-similar}. Part~\ref{item: model-ii} follows from the fact that the model generators $a,b,c_1,\dots,c_r$ specify a \emph{kneading automaton} (see \cite[Def.~6.7.4]{Nekrashevych2005} for the definition), and thus their product in any order is an odometer by \cite[Cor.~6.7.7]{Nekrashevych2005}. Finally, Part~\ref{item: model-iii} follows by a direct computation. 
\end{proof}

The following lemma, along with Corollaries~\ref{cor: special-element} and \ref{cor: conjugacy-in-G}, provides key technical ingredients for the proof of Theorem~\ref{thm-conjugates} in Section~\ref{sets-of-conjugates}.

\begin{lemma} \label{EXDOC: Le: forms of elements of G}
    Let $H\subseteq G$ be arbitrary subgroups of $\AT$ such that $H_1= \Sym_3$ and $G$ is self-similar. If for all $g\in G$ there exists some $h\in H$ of the form $h=(g,*,\one)$, then for all $g\in G$ the group $H$ also contains elements of the form $$(g,\one,*),\, (*,\one,g),\, (*,g,\one),\, (\one,*,g), \, \textrm{and} \;\; (\one,g,*),$$
    where each $*$ represents some (possibly different) element of $G$.
    
    Moreover, the analogous claim still holds if we replace $g\in G$ and $h \in H$ with $g\in G_{n-1}$ and $h \in H_n$, respectively, for any $n \geq 1$.
\end{lemma}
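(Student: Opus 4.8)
I would read the statement as saying that the single coordinate placement supplied by the hypothesis---namely $g$ in coordinate $1$ and $\one$ in coordinate $3$, with trivial first-level action---can be transported to every other way of placing a copy of $g$ and a trivial section in two distinct coordinates. Encoding each form as the ordered pair (coordinate holding $g$, coordinate holding $\one$), the hypothesis corresponds to the pair $(1,3)$, while the five target forms $(g,\one,*),(\one,g,*),(*,g,\one),(\one,*,g),(*,\one,g)$ correspond exactly to the remaining five ordered pairs of distinct elements of $\{1,2,3\}$, i.e.\ $(1,2),(2,1),(2,3),(3,1),(3,2)$. Since $\Sym_3$ acts (simply transitively) on the six ordered pairs of distinct coordinates, the plan is to produce each target by conjugating the hypothesis element inside $H$ by an element whose first-level action is the appropriate $\mu\in\Sym_3$; such an element exists precisely because $H_1=\Sym_3$.

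Concretely, fix a target pair and choose $\mu\in\Sym_3$ so that the pair $(1\cdot\mu^{-1},\,3\cdot\mu^{-1})$ equals the target. Using $H_1=\Sym_3$, fix once and for all an element $w=(w_1,w_2,w_3)\sigma_w\in H$ with $\sigma_w=\mu$. Now take an arbitrary $h=(h_1,h_2,h_3)\sigma_h\in H$ with trivial first-level action, $h_1=g$, $h_3=\one$, as provided by the hypothesis. Because $\sigma_h$ is trivial, Corollary~\ref{cor: special-element} (whose underlying computation via Lemma~\ref{lem: conj-action} is valid for \emph{any} conjugator with first-level action $\mu$, so that our fixed $w$ may be reused for all $h$) gives that $\sigma_{whw^{-1}}$ is again trivial and
\[(whw^{-1})|_x = w_x\, h_{x\cdot\mu}\, w_x^{-1}\qquad\text{for each }x\in\sX.\]
Hence the trivial section $h_3=\one$ moves to coordinate $3\cdot\mu^{-1}$ and stays $\one$, while the section $g=h_1$ moves to coordinate $1\cdot\mu^{-1}$ and becomes the conjugate $w_{1\cdot\mu^{-1}}\,g\,w_{1\cdot\mu^{-1}}^{-1}$; by the choice of $\mu$ these two coordinates are exactly the ones prescribed by the target form.

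The one blemish is that conjugation reproduces $g$ only up to the inner automorphism $g\mapsto w_x g w_x^{-1}$, rather than $g$ itself. This is harmless because the claim is quantified over all $g$: since $w\in H\subseteq G$ and $G$ is self-similar, the section $w_x=w|_x$ lies in $G$, so conjugation by $w_x$ is a bijection $G\to G$. Therefore, to realize an arbitrary target $g'\in G$ in the prescribed coordinate, I apply the hypothesis to $g:=w_{1\cdot\mu^{-1}}^{-1}\,g'\,w_{1\cdot\mu^{-1}}\in G$; the resulting conjugate $whw^{-1}\in H$ then has exactly $g'$ in the target coordinate and $\one$ in the designated coordinate. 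Letting $\mu$ range over the five relevant permutations yields all five claimed forms.

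The \emph{moreover} clause follows by the identical argument one level at a time: restriction does not change first-level actions, so $(H_n)_1=H_1=\Sym_3$ for $n\geq 1$, and self-similarity of $G$ gives $w|_x\in G_{n-1}$, so conjugation by $w|_x$ is again a bijection of $G_{n-1}$; the $\AT_n$-versions of Lemma~\ref{lem: conj-action} and Corollary~\ref{cor: special-element} then apply verbatim. I expect the only genuinely non-mechanical point to be the recognition just described---that conjugation delivers $g$ only up to an inner automorphism by $w_x$, and that this is precisely absorbed by the universal quantifier over $g$ together with the membership $w_x\in G$ coming from self-similarity; the permutation bookkeeping ($1\cdot\mu^{-1},3\cdot\mu^{-1}$ hitting each target pair) is routine once the regular action of $\Sym_3$ on ordered pairs is noted.
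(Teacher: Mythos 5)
Your proposal is correct and follows essentially the same route as the paper: conjugate the hypothesis element by an element of $H$ realizing a chosen first-level permutation (available since $H_1=\Sym_3$), and absorb the resulting inner twist $g\mapsto w_xgw_x^{-1}$ by pre-applying the hypothesis to a conjugate of $g$, which lies in $G$ because $w_x\in G$ by self-similarity. The only difference is presentational: the paper works out one representative case (conjugating by an element with first-level action $(1\,3)$ to produce $(\one,*,g)$) and declares the rest similar, whereas you handle all five targets uniformly via the simply transitive action of $\Sym_3$ on ordered pairs of coordinates.
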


\begin{proof}
    The proof is straightforward, and we outline it only for the first claim. Suppose that for all $g\in G$ we have some element of the form $(g,*,\one)$ inside $H$. Let us fix an arbitrary $g\in G$. As $H_1=\Sym_3$ and the group $G\supseteq H$ is self-similar, we can find some $h=(h_1,h_2,h_3)(13)\in H$ with each section $h_i\in G$. Then $h_3^{-1}gh_3\in G$ and, by our assumption, there exists some $\kappa\in G$ such that $(h_3gh_3^{-1},\kappa,\one)\in H$. Using Lemma~\ref{lem: conj-action}, we then have $$h(h_3^{-1}gh_3,\kappa,\one)h^{-1}=(\one,h_2\kappa h_2^{-1},g) \in H.$$ 
    So, we have shown that for all $g\in G$ the group $H$ contains an element of the form $(\one,*,g)$. The proofs for $(*,\one,g)$, $(*,g,\one)$, $(g,\one,*)$, and $(\one,g,*)$ follow in a similar manner.
\end{proof}

The next lemma states that \ref{main-assumption}-restricted model groups contain elements of special form; compare the hypotheses of Lemma~\ref{EXDOC: Le: forms of elements of G}. This will be of crucial importance for the proofs of our main results.

\begin{lemma}\label{lem: model-self-replicativity}
     Let $G=\llangle a,b,c_1,\dots,c_r\rrangle$ be a {\normalfont \ref{main-assumption}}-restricted model group as in Definition~\ref{defn-group}. Then
     for all $g\in G$ the group $G$ contains elements of the form 
     $$(g,*,\one),\, (g,\one,*),\, (*,\one,g),\, (*,g,\one),\, (\one,*,g), \, \textrm{and} \;\; (\one,g,*),$$
    where each $*$ represents some (possibly different) element of $G$. In particular, the group $G$ is self-replicating. 
    
    Furthermore, the analogous statements hold for the corresponding countable dense subgroup $\Gamma:=\langle a,b,c_1,\dots,c_r\rangle$.
\end{lemma}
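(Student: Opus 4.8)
The plan is to prove the statement for the countable subgroup $\Gamma$ first, since the profinite statement for $G$ follows by taking closures (using Lemma~\ref{lem-same-closures} to transfer level-wise containments, or simply by density). The key structural facts I would lean on are the defining Conditions~\ref{cond:Y1}--\ref{cond:Y4}: by \ref{cond:Y2}, each generator $\ell\in\{a,b,c_1,\dots,c_r\}$ occurs \emph{exactly once} among all the sections, and by \ref{cond:Y4} every $c_i$ eventually produces an $a$ or a $b$ as a deeper section. Together these imply a crucial \emph{connectivity} property of the model generators: starting from any generator and descending through sections, one can reach $a$ and $b$, and because $a,b$ have nontrivial first-level permutations $(1\,2)$ and $(2\,3)$, the group $\Gamma$ acts transitively on $\sX=\{1,2,3\}$ and contains the $3$-cycle $[a,b]=(\one,\one,\one)(1\,2\,3)$ of Lemma~\ref{lemma-model-groups}\ref{item: model-iii}.

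The heart of the argument is to first establish the single ``seed'' statement: for every $g\in G$ there exists an element of the form $(g,\ast,\one)$ in $G$ (equivalently in $\Gamma$, on the countable side). Once this seed is in hand, Lemma~\ref{EXDOC: Le: forms of elements of G} (applied with $H=G$, or with $H=\Gamma$ after checking $\Gamma_1=\Sym_3$, which holds since $a,b\in\Gamma$ generate $\Sym_3$ on the first level) immediately upgrades it to all six prescribed forms $(g,\ast,\one)$, $(g,\one,\ast)$, $(\ast,\one,g)$, $(\ast,g,\one)$, $(\one,\ast,g)$, $(\one,g,\ast)$, and the self-replicating property is then the special case where $g$ ranges over a generating set and we read off that the first-level stabilizer surjects onto $G$ under each section map $\phi_x$. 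So the real work is the seed.

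To prove the seed, I would argue by induction, building the section $g$ one level at a time, exploiting self-similarity (Lemma~\ref{lemma-model-groups}\ref{item: model-i}) together with the transitivity of $\Gamma$ on $\sX$. Concretely: given $g\in G$, I want to realize it as the first section of a group element whose third section is trivial. Because each generator $\ell$ appears as a section of exactly one generator and \ref{cond:Y4} forces a descending path from every $c_i$ down to $a$ or $b$, one can use conjugation by elements of $\Gamma$ (the $3$-cycle and the transpositions coming from $a,b$) to \emph{move} a prescribed section into the first coordinate while simultaneously arranging a trivial entry in the third coordinate; this is exactly the mechanism already illustrated in the proof of Lemma~\ref{EXDOC: Le: forms of elements of G}, where conjugating $(h_3gh_3^{-1},\kappa,\one)$ by a suitable $h\in H$ clears a coordinate. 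The main obstacle is the base case and the bookkeeping of \emph{which} generator is available to clear the third coordinate: one must verify, using the single-occurrence Condition~\ref{cond:Y2}, that the recursive construction of the $\widehat{g}_p$ in Proposition~\ref{prop: model-group-from-map} never obstructs the simultaneous demands ``$g$ in slot $1$'' and ``$\one$ in slot $3$''. I expect this combinatorial verification — showing the seed form is attainable for \emph{every} $g$, not merely for generators — to be the crux, and I would handle it by an induction on word length in $\Gamma$, reducing the case of a product $gh$ to the cases of $g$ and $h$ via the section identity $(gh)|_x=(g|_x)(h|_{x\cdot g})$ and the cocycle formula for first-level actions, closing the induction with the transitivity afforded by $a,b$.
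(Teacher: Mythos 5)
Your architecture matches the paper's exactly: establish the seed statement that every $g$ admits a companion $(g,*,\one)$, upgrade to the six forms via Lemma~\ref{EXDOC: Le: forms of elements of G} (after noting $\Gamma_1=G_1=\Sym_3$), and pass from $\Gamma$ to $G$ by a compactness/closure argument. The extension of the seed from generators to all of $\Gamma$, which you propose to handle by induction on word length with the section identity, is in fact immediate and needs no such machinery: every seed-form element lies in the stabilizer of the first level, so products and inverses of such elements have coordinatewise sections, and $\{g : (g,*,\one)\in\Gamma \text{ for some } *\}$ is automatically a subgroup; once it contains the generators it is all of $\Gamma$.

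The genuine gap is at the point you yourself flag as the crux: producing $(\ell,*,\one)$ for each model generator $\ell$. You attribute this to Conditions~\ref{cond:Y2} and~\ref{cond:Y4} plus a compatibility check against the construction of the $\widehat{g}_p$ in Proposition~\ref{prop: model-group-from-map} — but \ref{cond:Y4} plays no role here (in the paper it only guarantees $c_i\neq\one$), and Proposition~\ref{prop: model-group-from-map} is irrelevant, since the lemma concerns abstract model groups defined purely by \ref{cond:Y1}--\ref{cond:Y4}. The decisive ingredient, which you never invoke, is Condition~\ref{cond:Y3}: each $c_i$ has at least one trivial first-level section. Since $c_i$ acts trivially on the first level, every section of a conjugate $wc_iw^{-1}$ is a conjugate of a section of $c_i$ (Lemma~\ref{lem: conj-action}), so without \ref{cond:Y3} your proposed mechanism of ``moving $\ell$ to slot~$1$ while clearing slot~$3$'' by conjugation has nothing to clear with, and the step fails. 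With \ref{cond:Y3} in hand the base case is a short explicit computation, not an induction: if $\ell=x$ take $a^2=(x,x,\one)$; if $\ell=y$ take $(1\,2\,3)\,b^2\,(1\,2\,3)^{-1}=(y,y,\one)$, using $(1\,2\,3)=[a,b]\in\Gamma$; and if $\ell=c_{i,j}$, conjugate $c_i$ by powers of $(1\,2\,3)$ to bring $\ell$ into slot~$1$, landing in the form $(\ell,*,\one)$ or $(\ell,\one,*)$, and in the latter case conjugate once by $b$ to swap slots~$2$ and~$3$. Condition~\ref{cond:Y2} then ensures every generator is covered by one of these three cases. So your outline is the right one, but as written the core step rests on the wrong hypotheses and would not close.
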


\begin{proof}
    First, we claim that, for each model generator $\ell\in \{a,b,c_1,\dots, c_r\}$, the group $\Gamma\subset G$ contains some element of the form $(\ell, *, \one)$. Indeed, by Condition~\ref{cond:Y2}, we know that $\ell$ appears within the set $\{x,y\} \cup \{ c_{i,j}: \,i=1,\dots, r,\, j=1,2,3\}$ of sections of the generators. Our claim then follows by considering different cases:
    \begin{itemize}
        \item If $\ell=x$, we consider the element $a^2=(x,x,\one)$. 
        \item If $\ell=y$, we may consider  the conjugate $(1\, 2\, 3) \, b^2\,  (1\, 2\, 3)^{-1} = (y, y, 1)$, since $(1\, 2\, 3)\in G$ by Part~\ref{item: model-iii} of Lemma~\ref{lemma-model-groups}. 
        \item If $\ell=c_{i,j}$, then, by Condition~\ref{cond:Y3}, the generator $c_i$ itself or its conjugate by $(1\, 2\, 3)^{\pm1}$ must be of the form $(\ell, *, \one)$ or $(\ell, \one , *)$. In the latter case, we may conjugate one more time by $b$ to get an element of the desired form $(\ell, *, \one)$, and the claim follows. 
    \end{itemize}

    We conclude that for all $g\in \Gamma$, the countable group $\Gamma$ contains an element of the form $(g, *, \one)$; in particular, $\Gamma$ is self-replicating (see Definition~\ref{defn-self-similar-replicating}). The analogous statement for the closure $G=\overline{\Gamma}$ now follows by a compactness argument. Namely, given an arbitrary $g\in G$, let us consider a sequence $\{g_m\} \subset \Gamma$ converging to $g$. We know that for each $g_m$ there exists an element $(g_m,\kappa_m, \one) \in \Gamma\subset G$ for some $\kappa_m\in \Gamma$. Since $G$ is compact, the sequence $\{(g_m,\kappa_m, \one)\}$ has an accumulation point in $G$, which then must be of the form  $(g,\kappa, \one)$. Since $G$ is self-similar by Part~\ref{item: model-i} of Lemma~\ref{lemma-model-groups}, we have $\kappa\in G$, which completes the argument. 

    Finally, since $\Gamma_1=G_1=\Sym_3$, the existence of elements of the other desired forms for $\Gamma$ and $G$ follows immediately from Lemma~\ref{EXDOC: Le: forms of elements of G}, with $H=\Gamma$ and $H=G$, respectively. 
    \end{proof}

 \begin{cor}\label{cor: (g,ginv,1)}
     Let $G=\llangle a,b,c_1,\dots,c_r\rrangle$ be a {\normalfont \ref{main-assumption}}-restricted model group as in Definition~\ref{defn-group}. Then, for all $g\in G$, the element $(g,g^{-1},\one)$ and all its $\Sym_3$-conjugates belong to the commutator subgroup $[G,G]$. 

    Furthermore, the analogous statement holds for the corresponding countable dense subgroup $\Gamma:=\langle a,b,c_1,\dots,c_r\rangle$.
\end{cor}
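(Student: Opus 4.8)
The plan is to prove the statement first for the countable group $\Gamma$, purely algebraically, and then obtain the profinite case by a density argument. Throughout I would write $\rho=(1\,2\,3)$; by Lemma~\ref{lemma-model-groups}\ref{item: model-iii} we have $\rho=[a,b]\in[\Gamma,\Gamma]\subseteq[G,G]$, so conjugation by $\rho$ preserves the commutator subgroup, and $\rho(\gamma,\one,\one)\rho^{-1}=(\one,\one,\gamma)$, $\rho^{-1}(\gamma,\one,\one)\rho=(\one,\gamma,\one)$ realise the cyclic coordinate shifts. Since replacing $g$ by $g^{-1}$ sends $(g,g^{-1},\one)$ to $(g^{-1},g,\one)$, all six $\Sym_3$-conjugates are accounted for once one shows $(g,g^{-1},\one)\in[G,G]$ for every $g$; so it suffices to prove this single form.

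The decisive first step is to place a commutator into one coordinate: for $\gamma\in[G,G]$ I claim $(\gamma,\one,\one),(\one,\gamma,\one),(\one,\one,\gamma)\in[G,G]$. It suffices to treat $\gamma=[g,h]$ and the first slot. Using Lemma~\ref{lem: model-self-replicativity} I would choose witnesses with \emph{complementary} trivial slots, namely $u=(g,p,\one)$ (form $(g,*,\one)$) and $v=(h,\one,q)$ (form $(h,\one,*)$) in $G$. Both lie in the first-level stabiliser, so their commutator is computed coordinatewise: $[u,v]=\big([g,h],[p,\one],[\one,q]\big)=([g,h],\one,\one)$, because the unwanted factors $[p,\one]$ and $[\one,q]$ vanish automatically. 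Hence $([g,h],\one,\one)\in[G,G]$, and products together with $\rho$-conjugation give the other slots and all of $[G,G]$. This is precisely what breaks the apparent circularity of the single-coordinate claim; the same computation inside $\Gamma$ gives the corresponding statement for $[\Gamma,\Gamma]$.

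Next I would organise the good elements into a subgroup. Set $N=\{g\in G:(g,g^{-1},\one)\in[G,G]\}$. The identity $(gh,(gh)^{-1},\one)=(\one,[h,g],\one)\cdot(g,g^{-1},\one)(h,h^{-1},\one)$, whose correction term $(\one,[h,g],\one)$ lies in $[G,G]$ by the first step, shows $N$ is closed under multiplication; it is closed under inversion since $(g^{-1},g,\one)=(g,g^{-1},\one)^{-1}$; and $[G,G]\subseteq N$ because $(\gamma,\gamma^{-1},\one)=(\gamma,\one,\one)(\one,\gamma^{-1},\one)$ for $\gamma\in[G,G]$. As $g\mapsto(g,g^{-1},\one)$ is continuous and $[G,G]$ is closed (here one uses that $G$ is topologically finitely generated), $N$ is a \emph{closed} subgroup of $G$ containing $[G,G]$. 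Therefore $N=G$ as soon as $N$ contains the generators $a,b,c_1,\dots,c_r$, and for $\Gamma$ the same reduction to generators applies with no closure needed. For the generators I would use the clean identities, valid for any $g$ with $(g,g,\one)\in G$ or $(g,\one,\one)\in G$:
\[
(g,g^{-1},\one)=\rho^{-1}\,[(g,g,\one),\rho]\,\rho \qquad\text{and}\qquad (g,g^{-1},\one)=[(g,\one,\one),\rho]^{-1},
\]
each of which exhibits the target as (a conjugate of) a commutator. Since $a^2=(x,x,\one)$ and a $\rho$-conjugate of $b^2=(\one,y,y)$ give $(x,x,\one),(y,y,\one)\in G$, the two section-generators $x,y$ lie in $N$; and by Condition~\ref{cond:Y2} every generator occurs exactly once as a section of a unique generator, whose wreath recursion (with Conditions~\ref{cond:Y3}, \ref{cond:Y4}) yields a witness of shape $(\ell,\ell,\one)$, $(\ell,\one,\one)$, or $(\ell,\ell',\one)$ to feed into these identities and the multiplicativity above.

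The step I expect to be the main obstacle is this last one. A witness $(\ell,\ell',\one)$ coming from a Case~\ref{case:3b} generator relates $\ell$ only to the second section-generator $\ell'$, so a naive bookkeeping leaves the generator values underdetermined; this freedom is illusory and must be removed by descending through the section structure, where Condition~\ref{cond:Y4} guarantees that every chain terminates at $a$ or $b$. The cleanest rigorous route is to pass to the finite quotients $G_n$: the recursively defined generators acquire trivial sections below level $n$, so the identities above become honest commutator relations there (for instance, at level two $(b,b^{-1},\one)$ coincides with $[a,c_1]$ for the recursion $c_1=(b,c_1,\one)$ with $a=(1\,2)$). One then proves by downward induction on the tree that $(\ell,\ell^{-1},\one)\in[G_{n+1},G_{n+1}]$ for all $n$ and passes to the inverse limit, using again that $[G,G]$ is closed. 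The profinite conclusion then follows since $\Gamma$ is dense in $G$, and the $\Sym_3$-conjugates follow as in the first paragraph.
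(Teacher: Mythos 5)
Your preliminary steps are all verifiably correct: the identity $[(g,\one,\one),\rho]^{-1}=(g,g^{-1},\one)$ with $\rho=(1\,2\,3)$, the placement of single commutators in one slot via witnesses $(g,p,\one)$, $(h,\one,q)$ with complementary trivial slots, and the multiplicativity relation $(gh,(gh)^{-1},\one)=(\one,[h,g],\one)(g,g^{-1},\one)(h,h^{-1},\one)$ all check out under the paper's conventions. Nevertheless the proposal has a genuine gap, and it is exactly the one you flag yourself. For a Case~\ref{case:3b}-type generator your witness has the shape $(\ell,\ell',\one)$ with $\ell\neq\ell'$, which fits neither of your two identities: one computes $[(\ell,\ell',\one),\rho]=\bigl(\ell^{-1},(\ell')^{-1}\ell,\ell'\bigr)$, so the method stalls precisely where you predict, and the proposed repair by downward induction through the finite quotients $G_n$ is sketched but never carried out (and is not routine, since the correction terms couple different generators). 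Separately, your reduction from generators to all of $G$ needs $N=\{g\in G:(g,g^{-1},\one)\in[G,G]\}$ to be closed, which requires the \emph{abstract} derived subgroup $[G,G]$ to be closed in $G$; for topologically finitely generated profinite groups this is true, but only by the Nikolov--Segal theorem — a very heavy, unflagged import that the statement does not need.

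All of this machinery is avoidable, and the missed idea is small. Lemma~\ref{lem: model-self-replicativity} supplies, for \emph{every} $g\in G$ (not merely for generators), an element $\kappa=(g,\one,h)\in G$, and a one-line computation against $a=(x,\one,\one)(1\,2)$ gives $[\kappa^{-1},a^{-1}]=\kappa a\kappa^{-1}a^{-1}=(g,g^{-1},\one)$: the unknown sections $h$ and $x$ cancel because the middle slot of $\kappa$ is trivial and $a$ swaps the first two slots. This exhibits $(g,g^{-1},\one)$ as an honest commutator of elements of $G$ for arbitrary $g$, so no subgroup $N$, no reduction to generators, no closure argument, and no passage to finite quotients is needed; the $\Sym_3$-conjugates then follow, as in your first paragraph, by replacing $g$ with $g^{-1}$ and conjugating by $(1\,2\,3)^{\pm 1}\in[\Gamma,\Gamma]$, and the identical computation inside $\Gamma$ gives the countable case. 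This is precisely the paper's proof, and it is where your use of Lemma~\ref{lem: model-self-replicativity} should have been aimed: at the mixed form $(g,\one,*)$ commutated with $a$, rather than only at building diagonal witnesses for the single-slot claim.
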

\begin{proof}
     By Lemma~\ref{lem: model-self-replicativity}, $G$ contains the element $\kappa:=(g, \one, h)$ for some $h\in G$. A direct computation then shows that
    \[[\kappa,a^{-1}]=\kappa^{-1}a\kappa a^{-1}=(g^{-1},g,\one)\in [G,G]\]
    and
    \[[\kappa^{-1},a^{-1}]=\kappa a\kappa^{-1} a^{-1}=(g,g^{-1},\one)\in [G,G].\]
    Since $(1\, 2\, 3)\in [G,G]$ by Lemma~\ref{lemma-model-groups}, \ref{item: model-iii}, the desired statement for $G$ follows by conjugating with $(1\, 2\, 3)^{\pm1}$. The proof for the countable subgroup $\Gamma$ is completely analogous. 
\end{proof}

Finally, we show that the class of \ref{main-assumption}-restricted model groups is not invariant under conjugation in $W$. In particular, this implies that the profinite geometric iterated monodromy group $G^{geom}(f)$ in Theorem \ref{main-theorem} is conjugate to a \ref{main-assumption}-restricted model group, but need not be a \ref{main-assumption}-restricted model group itself.

\begin{cor}\label{cor-class-not-closed}
The class of {\normalfont \ref{main-assumption}}-restricted model groups is not closed under conjugation. Namely, for each group $G$ in this class there exists some $w\in W$ such that $wGw^{-1}$ is not a {\normalfont \ref{main-assumption}}-restricted model group. 
\end{cor}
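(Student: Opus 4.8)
The plan is to use \emph{self-similarity} as the distinguishing invariant. By Lemma~\ref{lemma-model-groups}, \ref{item: model-i}, every \ref{main-assumption}-restricted model group is self-similar; hence it suffices to produce, for each such $G$, a conjugate $G^w:=wGw^{-1}$ that fails to be self-similar, since a non-self-similar group cannot lie in the class. The underlying mechanism is that conjugation in $\AT$ does not respect the section structure, so a carefully placed conjugator can create an element of $G^w$ one of whose sections escapes $G^w$.

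First I would record that $G\neq \AT$: a model group is topologically generated by the finitely many elements $a,b,c_1,\dots,c_r$, whereas $\AT=\Aut(\sT_\sX)$ is not topologically finitely generated (for instance, its abelianization is $\prod_{i\geq 1}\Z/2$). Fix therefore some $\zeta\in \AT\setminus G$. Using the element $(1\,2\,3)=(\one,\one,\one)(1\,2\,3)\in G$ from Lemma~\ref{lemma-model-groups}, \ref{item: model-iii}, together with the conjugation formula of Lemma~\ref{lem: conj-action}, a direct computation with $\eta:=(\one,\one,\zeta)$ gives
\[
\eta\,(1\,2\,3)\,\eta^{-1}=(\one,\zeta^{-1},\zeta)(1\,2\,3).
\]
Its section at the vertex $3$ equals $\zeta$; since $G$ is self-similar and $\zeta\notin G$, this forces $\eta\,(1\,2\,3)\,\eta^{-1}\notin G$. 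As $G$ is closed, the non-membership is visible at some finite level, i.e. there is $n_1\geq 1$ with $\Pi_{n_1}\!\big(\eta(1\,2\,3)\eta^{-1}\big)\notin G_{n_1}$.

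Next I would transplant this failure to a deep vertex in order to avoid a self-reference problem. Choose a vertex $v_0$ at a level $N\geq n_1$ and let $w\in \AT$ be the automorphism that acts as $\eta$ on the subtree $\sT_{(v_0)}$ and trivially everywhere else; in particular $w|_{v_0}=\eta$ and $\Pi_{N}(w)=\one$. Since $G$ is self-replicating (Lemma~\ref{lem: model-self-replicativity}), the section map $\phi_{v_0}\colon \St_G(v_0)\to G$ is surjective (the standard extension of Definition~\ref{defn-self-similar-replicating} from $\sX$ to arbitrary vertices), so there is $g'\in \St_G(v_0)$ with $g'|_{v_0}=(1\,2\,3)$. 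Then $wg'w^{-1}\in G^w$ fixes $v_0$, and by the section identities $(wg'w^{-1})|_{v_0}=\eta\,(1\,2\,3)\,\eta^{-1}$. Finally, because $\Pi_{n_1}(w)=\one$ we have $(G^w)_{n_1}=\Pi_{n_1}(w)\,G_{n_1}\,\Pi_{n_1}(w)^{-1}=G_{n_1}$, whence $\Pi_{n_1}\!\big((wg'w^{-1})|_{v_0}\big)=\Pi_{n_1}\!\big(\eta(1\,2\,3)\eta^{-1}\big)\notin G_{n_1}=(G^w)_{n_1}$. Thus $(wg'w^{-1})|_{v_0}\notin G^w$, so $G^w$ is not self-similar and therefore is not a \ref{main-assumption}-restricted model group.

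The main obstacle is precisely the self-reference that a naive choice of conjugator creates: conjugating directly by $\eta$ makes the escaping section $\zeta$ coincide with the data defining $\eta$, so one cannot decide membership of $\zeta$ in $\eta G\eta^{-1}$ without unwinding a recursion. Pushing the conjugator down to a deep vertex $v_0$ decouples the two roles — the conjugator is trivial on the first $n_1$ levels, while the obstruction $\eta(1\,2\,3)\eta^{-1}\notin G$ is already witnessed on those levels — so the escape reduces to a finite-level comparison and the argument closes. The only inputs beyond this are the self-similarity and self-replication of model groups (Lemmas~\ref{lemma-model-groups} and~\ref{lem: model-self-replicativity}), the presence of the $3$-cycle $(1\,2\,3)$ with trivial sections, and the properness $G\neq \AT$.
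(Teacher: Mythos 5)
Your proof is correct, and it takes a genuinely different route from the paper's. The paper argues by contradiction using the single first-level conjugator $(w,\one,\one)$ for some $w\in \AT\setminus G$: assuming the conjugate $H:=(w,\one,\one)G(w,\one,\one)^{-1}$ were itself a \ref{main-assumption}-restricted model group, it applies the model-group package \emph{to $H$} — self-similarity of $H$ forces $w\in H$ (from $(w,\one,w^{-1})(1\,2\,3)\in H$), self-replication of $H$ (Lemma~\ref{lem: model-self-replicativity}) yields some $(w,h,\one)\in H$, and conjugating back into $G$ gives $(w,h,\one)\in G$, whence $w\in G$ by self-similarity of $G$, a contradiction. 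You instead argue directly: you exhibit a conjugate that fails self-similarity outright, which suffices by Lemma~\ref{lemma-model-groups},\ref{item: model-i}. Your computation $\eta(1\,2\,3)\eta^{-1}=(\one,\zeta^{-1},\zeta)(1\,2\,3)$ is correct under the paper's right-action conventions, and the deep-vertex trick resolves exactly the self-reference issue that the paper's contradiction hypothesis circumvents: in the paper, the key membership $w\in H$ is extracted from the \emph{assumed} structure of $H$, whereas you arrange $\Pi_{n_1}(w)=\one$ so that $(G^w)_{n_1}=G_{n_1}$ and the escape of the section $\eta(1\,2\,3)\eta^{-1}$ is certified at a finite level with no assumption whatsoever on $G^w$. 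The ingredients are the same in both arguments ($G\neq \AT$ by finite topological generation, the $3$-cycle $(1\,2\,3)\in G$ with trivial sections, self-similarity, and self-replication — which you extend, by the routine induction on word length, from letters in $\sX$ to the vertex $v_0$, legitimately since model groups act transitively on $\sX$). What each approach buys: the paper's proof is shorter and never leaves the first level; yours proves a formally stronger failure (the conjugate is not even self-similar, rather than merely not satisfying Definition~\ref{defn-group}) and is constructive, with an explicit finite-level witness $n_1$, at the modest cost of invoking the standard extension of the surjectivity of section maps $\St_G(v)\to G$ to arbitrary vertices.
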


\begin{proof}
    Let $G$ be an arbitrary {\normalfont \ref{main-assumption}}-restricted model group, and fix some $w\in \AT\setminus G$. Note that such an element $w$ exists, because $G$ is finitely topologically generated, whereas the whole automorphism $\AT$ is not. We claim that the group $H:=(w,\one,\one)G(w,\one,\one)^{-1}$ does not satisfy Definition~\ref{defn-group}. For the sake of contradiction, let us assume that it does.

    First, let us recall that $(1\,2\,3)\in G$ by Part~\ref{item: model-iii} of Lemma~\ref{lemma-model-groups}. Hence, we get $$(w,\one,\one)(123)(w^{-1},\one,\one)=(w,\one,w^{-1})(123)\in H.$$ By the assumption and Part~\ref{item: model-i} of Lemma~\ref{lemma-model-groups}, we conclude that $w\in H$. Using again the assumption and Lemma~\ref{lem: model-self-replicativity}, we know that there exists some $h\in H$ such that $(w,h,\one)\in H$. Conjugating this element back to $G$, we obtain $$(w^{-1},\one,\one)(w,h,\one)(w,\one,\one)=(w,h,\one)\in G.$$ Hence, by the self-similarity of $G$, we have $w\in G$, which is a contradiction.
\end{proof}

\section{Invariable generation and conjugacy of groups}\label{sec: inv-generation}

Our goal in this section is to prove Theorem~\ref{main-theorem}, which states that profinite geometric iterated monodromy groups associated to cubic PCF polynomials satisfying Assumption~\ref{main-assumption} are finitely invariably generated and, up to conjugacy, they are classified by the isomorphism class of their ramification portrait. This result follows from two statements for \ref{main-assumption}-restricted model groups introduced in Definition~\ref{defn-group}: first, in Section~\ref{sets-of-conjugates}, we prove that a \ref{main-assumption}-restricted model group is finitely invariably generated; second, in Section~\ref{sec-conjugate-subgroups}, we prove that a subgroup $H$ of $\AT= \Aut(T)$, generated by $\AT$-conjugates of the model generators of a \ref{main-assumption}-restricted model group $G$, such that $H$ contains an odometer, is conjugate to $G$ by an element of $\AT$. The proof of Theorem~\ref{main-theorem} then follows in Section~\ref{sec-proof-main-theorem}.

\subsection{Finite invariable generating sets for \ref{main-assumption}-restricted model groups}\label{sets-of-conjugates}

Recall that $\sX = \{1,2,3\}$ and $T_n$ denotes a finite subtree of the ternary rooted tree $T = \sT_\sX$ composed of all vertices of the first $n$ levels. Moreover, for a group $G \subset \AT=\Aut(T)$ and $n\geq 0$, we set  $G_n:=\{g|_{T_n}: g\in G\}$. Throughout this section, when we say that an element $g\in W_n$ \emph{acts transitively on the subtree $T_n$} (or that $g \in W_n$ is a \emph{transitive element}), we mean that $g$ acts transitively on the $n$-th (and hence on any $k$-th, for $0 \leq k \leq n$) level set $\sX^n$ of $T_n$.

\begin{thm}\label{thm-conjugates}
    Let $G=\llangle a,b,c_1,\dots,c_r\rrangle \subset \AT$ be a \emph{\ref{main-assumption}}-restricted model group as in Definition~\ref{defn-group}. Then the following statements hold: 
    \begin{enumerate}[label=(\roman*),font=\normalfont]
    \item \label{inclusion-with-odometer} Suppose $A,B,C_1,\dots,C_r\in G$ are $G$-conjugates of $a,b,c_1,\dots, c_r$, respectively, and let $D \in G$ be an odometer. Then the group $H := \llangle A,B,C_1,\dots,C_r, D \rrangle$ equals $G$.
    \item \label{invariable-generating-set} The set $S = \{a,b,c_1,\ldots,c_r,d\}$, where $d\in G$ is any odometer, is an invariable generating set for $G$. In particular, we can take $d=abc_1\cdots c_r$.
    \end{enumerate}  
 \end{thm}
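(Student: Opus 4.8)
The plan is to obtain part~\ref{invariable-generating-set} as a formal consequence of part~\ref{inclusion-with-odometer}, and to prove the latter by a level-by-level induction. For the reduction, fix any choice of conjugating elements and set $A:=g_a a g_a^{-1}$, $B:=g_b b g_b^{-1}$, $C_i:=g_{c_i} c_i g_{c_i}^{-1}$, and $D:=g_d d g_d^{-1}$. Since conjugation preserves the order of each restriction $g|_{T_n}$, a $G$-conjugate of an odometer is again an odometer (Definition~\ref{defn-odometer}); hence $D$ is an odometer in $G$, while $A,B,C_i$ are $G$-conjugates of the model generators. Part~\ref{inclusion-with-odometer} then gives $\llangle A,B,C_1,\dots,C_r,D\rrangle=G$, which is precisely the statement that $S$ invariably generates $G$ (Definition~\ref{defn-inv-gen}). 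The final assertion follows because $d=abc_1\cdots c_r$ is an odometer by Lemma~\ref{lemma-model-groups}\ref{item: model-ii}.

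For part~\ref{inclusion-with-odometer} the inclusion $H\subseteq G$ is immediate, so by Lemma~\ref{lem-same-closures} it suffices to prove $H_n=G_n$ for all $n\geq 0$, which I would do by induction on $n$. The base case $n=1$ is exactly where the odometer is indispensable: the first-level actions of $A$ and $B$ are transpositions (as $\Sym_3$-conjugates of $(1\,2)$ and $(2\,3)$, by Corollary~\ref{cor: conjugacy-in-G}), those of the $C_i$ are trivial, whereas that of $D$ is a $3$-cycle; and a single transposition together with a $3$-cycle already generate $\Sym_3=G_1$. The transpositions coming from $A$ and $B$ need not differ, which is precisely why the $a,b,c_i$ cannot invariably generate $G$ without an odometer; compare Example~\ref{ex-counterexample}.

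The inductive step is the heart of the argument. Assuming $H_n=G_n$, I would deduce $H_{n+1}=G_{n+1}$ by checking two things: that $H_{n+1}$ and $G_{n+1}$ have the same image in $G_1=\Sym_3$ (immediate, since $H_{n+1}$ surjects onto $H_1=\Sym_3$), and that they have the same first-level stabilizer $\St_{H_{n+1}}(L_1)=\St_{G_{n+1}}(L_1)$. Because $H_{n+1}\subseteq G_{n+1}$, these two equalities force $|H_{n+1}|=|G_{n+1}|$ and hence $H_{n+1}=G_{n+1}$. To match the stabilizers, one must produce inside $H$ elements whose three first-level sections realize — modulo level $n$ — every section configuration occurring in $\St_G(L_1)$. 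Lemma~\ref{lem: model-self-replicativity} describes these configurations for $G$ (all forms $(g,*,\one)$, $(\one,g,*)$, etc.), and Corollary~\ref{cor: (g,ginv,1)} supplies the diagonal relations $(g,g^{-1},\one)\in[G,G]$ that cut $\St_G(L_1)$ out of $G\times G\times G$.

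The main obstacle is that $A,B,C_i$ are known only up to $G$-conjugacy, so their sections are conjugates of the prescribed sections $x,y,c_{i,j}$ rather than those sections themselves, and the conjugating elements are not a priori in $H$. The strategy for overcoming this is to use the inductive hypothesis $H_n=G_n$ to absorb the conjugation ambiguity: working modulo level $n+1$, the conjugators act only through their level-$n$ truncations, which already lie in $H_n=G_n$, so suitable products and commutators of $A,B,C_i,D$ with elements of $H$ reproduce — via Corollary~\ref{cor: conjugacy-in-G} and Corollary~\ref{cor: special-element} — the required elements $(g,g^{-1},\one)$ and $(g,*,\one)$ of $\St_H(L_1)$ up to level $n+1$. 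Combined with the self-replicating forms for $G$ from Lemma~\ref{lem: model-self-replicativity}, this yields $\St_{H_{n+1}}(L_1)=\St_{G_{n+1}}(L_1)$ and closes the induction.
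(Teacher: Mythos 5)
Your reduction of \ref{invariable-generating-set} to \ref{inclusion-with-odometer}, the level-by-level induction via Lemma~\ref{lem-same-closures}, and the use of the odometer in the base case all match the paper's strategy. However, the inductive step contains a genuine gap in how the induction hypothesis is formulated. You induct on the statement ``$H_n=G_n$ for the given $H$''. But the crux of the step is to produce, for every $g\in G_n$, an element of $H_{n+1}$ of the form $(g,*,\one)$, and the only raw material for this is the first-level sections of $A^2$, $B^2$, $C_i$ (which act trivially on $T_1$) together with a section of $D^3$. By Corollary~\ref{cor: conjugacy-in-G}, those sections are conjugates of $x|_{T_n}$, $y|_{T_n}$, $c_{i,j}|_{T_n}$ by \emph{unknown elements of $G_n$} --- not of $H_n$ --- so the group they generate is a \emph{new} group built from an arbitrary conjugate tuple, and ``$H_n=G_n$'' for your fixed $H$ says nothing about it. What is needed is the universally quantified statement at level $n$: for \emph{all} $G_n$-conjugates of the model generators and \emph{all} transitive elements, the generated group is $G_n$. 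This is exactly why the paper's induction statement $(*_n)$ quantifies over all such tuples $A,B,C_1,\dots,C_r,D$; your remark that ``the conjugators act only through their level-$n$ truncations, which already lie in $H_n=G_n$'' does not repair this, since knowing a truncated conjugator lies in $G_n$ does not produce an element of $H_{n+1}$ realizing it as a section.

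The second gap is the endgame. You propose to conclude by matching first-level stabilizers, $\St_{H_{n+1}}=\St_{G_{n+1}}$, realized through the wildcard forms $(g,*,\one)$, $(\one,g,*)$, etc.\ of Lemma~\ref{lem: model-self-replicativity} and the diagonal elements $(g,g^{-1},\one)$ of Corollary~\ref{cor: (g,ginv,1)}. These do not pin down the stabilizer: given $(g_1,g_2,g_3)\in\St_{G_{n+1}}$, the wildcard elements let you normalize two coordinates, after which you need an element $(\one,\one,w)$ for one \emph{specific} $w$, and nothing cited guarantees $\{\one\}\times\{\one\}\times G_n\subset\St_{G_{n+1}}$ nor that $\St_G$ of the first level is generated by the wildcard forms (regular branchness only yields $\mathfrak{C}\times\mathfrak{C}\times\mathfrak{C}$ with $\mathfrak{C}=\overline{[G,G]}$, which falls short of exact triples with arbitrary sections). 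The paper avoids describing the stabilizer altogether: once the $(g,*,\one)$-type elements are available, it applies Lemma~\ref{EXDOC: Le: forms of elements of G} and Corollary~\ref{cor: special-element} to conjugate $A$, $B$, $C_i$ \emph{inside} $H_n$, stripping the section ambiguity one coordinate at a time until the exact generators $a|_{T_n}$, $b|_{T_n}$, $c_i|_{T_n}$ appear in $H_n$, giving $G_n\subseteq H_n$ directly. Your counting scheme (equal image in $\Sym_3$ plus equal level-one kernel forces equality of the finite groups) is sound in principle, but the kernel equality is precisely the statement your tools do not deliver; I recommend restructuring the induction around the universally quantified $(*_n)$ and recovering the generators themselves rather than the stabilizer.
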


   \begin{rmk}
{\rm
Recall that, by Part~\ref{item: model-ii} of Lemma~\ref{lemma-model-groups}, the product $d$ of the model generators $a,b,c_1,\dots,c_r$ of the \ref{main-assumption}-restricted model group $G$ (in any order) is an odometer.
We note that an arbitrary odometer $D\in G$ need not be a conjugate of $d$ in $G$. 
} 
\end{rmk}

 \begin{cor}   \label{EXDOC: Thm: invariable generation} 
  Let $G=\llangle a,b,c_1,\dots,c_r\rrangle \subset \AT$ be a \emph{\ref{main-assumption}}-restricted model group as in Definition~\ref{defn-group}.
  Suppose $A\sim_G a, B\sim_G b$, and $C_i\sim_G c_i$ for all $1 \leq i \leq r$.  Then the group $H:=\llangle A,B,C_1,\dots,C_r\rrangle$ contains an odometer if and only if $H=G$. 
\end{cor}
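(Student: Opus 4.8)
The corollary asserts that, given $G$-conjugates $A,B,C_1,\dots,C_r$ of the model generators, the closed group $H:=\llangle A,B,C_1,\dots,C_r\rrangle$ equals $G$ if and only if $H$ contains an odometer. The plan is to derive both directions directly from Theorem~\ref{thm-conjugates},\ref{inclusion-with-odometer}, which does essentially all the work, so the proof of the corollary should be short.

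**The forward direction.**

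Suppose $H=G$. Then $H$ contains an odometer simply because $G$ does: by Lemma~\ref{lemma-model-groups},\ref{item: model-ii}, the product $d=abc_1\cdots c_r$ of the model generators is an odometer lying in $G=H$. This direction requires no further argument.

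**The reverse direction.**

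Suppose $H$ contains an odometer $D$. Since $D\in H\subseteq G$, we have $D\in G$, so $D$ is an odometer belonging to $G$. Now the generating set of $H$ consists of $G$-conjugates $A\sim_G a$, $B\sim_G b$, and $C_i\sim_G c_i$, together with (after enlarging the generating set by $D$, which does not change $H$ since $D\in H$) the odometer $D\in G$. We may therefore write
\[
H=\llangle A,B,C_1,\dots,C_r,D\rrangle,
\]
which is precisely the group appearing in Theorem~\ref{thm-conjugates},\ref{inclusion-with-odometer}, with the given $G$-conjugates and the odometer $D\in G$. That theorem asserts this group equals $G$, so $H=G$, as desired.

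**Main point and obstacle.**

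The only subtlety is bookkeeping: Theorem~\ref{thm-conjugates},\ref{inclusion-with-odometer} is stated for the group topologically generated by $A,B,C_1,\dots,C_r$ \emph{together with} a prescribed odometer $D\in G$, whereas the corollary starts from $H=\llangle A,B,C_1,\dots,C_r\rrangle$ and merely assumes that $H$ contains some odometer. The reconciliation is that once $H$ contains an odometer $D$, adjoining $D$ to the topological generating set leaves $H$ unchanged (as $D$ is already in $H$), so $H=\llangle A,B,C_1,\dots,C_r,D\rrangle$ and the theorem applies verbatim. There is essentially no hard step here; the corollary is a direct repackaging of part~\ref{inclusion-with-odometer} of the theorem, with the forward direction supplied by the existence of an odometer in every model group.
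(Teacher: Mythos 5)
Your proof is correct and matches the paper's intent exactly: the corollary is left as an immediate consequence of Theorem~\ref{thm-conjugates},\ref{inclusion-with-odometer} (reverse direction, after noting $D\in H\subseteq G$ and that adjoining $D$ to the topological generating set leaves $H$ unchanged) together with Lemma~\ref{lemma-model-groups},\ref{item: model-ii} (forward direction). Your bookkeeping remark is precisely the right reconciliation, and no further argument is needed.
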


The role of the odometer in the statements of Theorem \ref{thm-conjugates} is illustrated by the following simple example.

\begin{ex} \label{ex-counterexample}
    Let $G=\llangle a,b,c_1,...,c_r\rrangle$ be a {\normalfont \ref{main-assumption}}-restricted model group as in Definition~\ref{defn-group}. Upon replacing $b$ by $$B:=(ba)b(ba)^{-1}=(x,yx^{-1},\one)(12),$$ the group $H:=\llangle a,B,c_1,...,c_r\rrangle$ is not equal to $G$. Indeed, since each $c_i$ acts trivially on the first level, and $a|_{T_1}=(12)$, we get $H_1=\langle (12)\rangle\neq \Sym_3=G_1$. Hence $H\ne G$, and therefore $G$ is not invariably generated by $\{a,b,c_1,...,c_r\}$.
\end{ex}

The main steps of the proof of Theorem \ref{thm-conjugates} are as follows: 
\begin{enumerate}
    \item \label{step1-52} As $A,B,C_1,\dots,C_r$ are conjugate to $a,b,c_1,\dots,c_r$ in $G$, and $D\in G$, we have $H=\llangle A,B,C_1,\dots,C_r,D\rrangle \subseteq G$, so we have to show that $G\subseteq H$. The latter will follow from the the statement $G_n = H_n$ for all $n\geq 0$, which we will prove by induction.
    
    \item \label{step2-52} The group $G$ is self-similar, and, since $H$ contains an odometer $D$, we have $G_1 = H_1= {\mathcal S}_3$. The main technical step in the proof is to show that $H_n$ and $G_{n-1}$ satisfy the hypothesis of Lemma~\ref{EXDOC: Le: forms of elements of G}; that is, for each $g \in G_{n-1}$ the group $H_n$ contains an element of the form $(g,*,\one)$. To establish this, we repeatedly use Corollary~\ref{cor: special-element}.
    \item \label{step3-52} Once we have Step~\ref{step2-52}, we conjugate the generators $A|_{T_n},B|_{T_n},C_1|_{T_n},\dots,C_r|_{T_n}$ of $H_n$ by elements of $H_n$ constructed using Lemma~\ref{EXDOC: Le: forms of elements of G}, and show that these conjugates yield the generators $a|_{T_n},b|_{T_n},c_1|_{T_n},\ldots,c_r|_{T_n}$ of $G_n$. This proves Part~\ref{inclusion-with-odometer}.
    \item \label{step4-52} Part~\ref{invariable-generating-set} follows from the observation that any conjugate of the odometer $d$ in $G$ is again an odometer, so we may take such a conjugate as $D$ in Part~\ref{inclusion-with-odometer}.
\end{enumerate}

\begin{proof}[Proof of Theorem \ref{thm-conjugates}] 
 Let $G=\llangle a,b,c_1,\ldots,c_r \rrangle$ be a \ref{main-assumption}-restricted model group  as in Definition~\ref{defn-group}.

{\bf Step~\ref{step1-52}.}    
    %By Lemma \ref{lem-same-closures}, if $H_n=G_n$ for all $n\geq 0$, then $H=G$. We will show $H_n=G_n$ by induction, proving the following statement for $n\geq0$:
    We use induction to prove the following statement for all $n\geq0$:

\begin{enumerate}
[label=\normalfont{$(*_n)$}]
    \item\label{induction: invariable generation} For any $A,B,C_1,\dots,C_r\in G_n$ with $A\sim_{G_n} a|_{T_n},B\sim_{G_n} b|_{T_n},C_i\sim_{G_n} c_i|_{T_n}$, $1 \leq i \leq r$, and any $D\in G_n$ acting transitively on $T_n$, we have $\langle A,B,C_1,\dots,C_r,D\rangle =G_n$.
\end{enumerate}

    As $(*_0)$ trivially holds, we suppose that $(*_{n-1})$ is true for some $n \geq 1$. Take $A,B,C_1,\dots,C_r\in G_n$, where $A\sim_{G_n} a|_{T_n},B\sim_{G_n} b|_{T_n}$, and $C_i\sim_{G_n} c_i|_{T_n}$ for all $i$, and let $D\in G_n$ be any element that acts transitively on $T_n$. Define $H_n:=\langle A,B,C_1,\dots,C_r,D\rangle\subseteq G_n$. We will prove that $a|_{T_n},b|_{T_n}\in H_n$ and $c_i|_{T_n}\in H_n$ for all $1\leq i\leq r$.

    {\bf Step \ref{step2-52}.} First, we show that for every model generator $\ell\in \{a,b, c_1,\dots,c_r\}$ of $G$ there is an element $(\widehat{\ell}, *, \one) \in H_n$ with $\widehat{\ell}$ conjugate to $\ell|_{T_{n-1}}$ in $G_{n-1}$.
    
    Consider the elements $A^2,B^2,C_i\in H_n$, which satisfy the following relations:
    \[\begin{matrix*}[l]
    A^2\sim_{G_n} a^2|_{T_n}=(x|_{T_{n-1}},x|_{T_{n-1}},\one), &\\
    B^2\sim_{G_n} b^2|_{T_n}=(\one,y|_{T_{n-1}},y|_{T_{n-1}}), \;\text{and} &\\
    C_i\sim_{G_n} c_i|_{T_n}=(c_{i,1}|_{T_{n-1}},c_{i,2}|_{T_{n-1}},c_{i,3}|_{T_{n-1}}), & 1 \leq i \leq r.\\
    \end{matrix*}.\]

    In particular, they all act trivially on $T_1$. Note that $H_n|_{T_1}:=\{h|_{T_1}: h\in H_n\}=\Sym_3$, because $A=a|_{T_n}$ and $D$ act as a 2-cycle and a 3-cycle on $T_1$, respectively, and any 2-cycle and 3-cycle generate~$\Sym_3$. Hence, we can obtain $H_n$-conjugates $\widehat{A^2}=(\alpha,\alpha',\one)$ and $\widehat{B^2}=(\beta,\beta',\one)$ of $A^2$ and $B^2$, respectively, where $\alpha\sim_{G_{n-1}} x|_{T_{n-1}}$ and $\beta\sim_{G_{n-1}} y|_{T_{n-1}}$. 
     Indeed, suppose $A^2 = (\alpha_1,\alpha_2,\alpha_3)$ for some $\alpha_1, \alpha_2,\alpha_3\in G_{n-1}$.  Since $G$ is self-similar by Part~\ref{item: model-i} of Lemma~\ref{lemma-model-groups}, Corollary~\ref{cor: conjugacy-in-G} implies that there are elements $g',g''\in G_{n-1}$ and $\mu \in \Sym_3$ such that
     $$\alpha_{1\cdot\mu} = g' (x|_{T_{n-1}}) (g')^{-1}, \quad \alpha_{2\cdot\mu} = g'' (x|_{T_{n-1}}) (g'')^{-1}, \quad \text{and} \quad \alpha_{3\cdot\mu} = \one.$$    
    Then, by Corollary \ref{cor: special-element}, there exists $h = (h_1,h_2,h_3)\mu \in H_n$ such that
      $$\widehat{A^2}:= hA^2h^{-1} = \left(h_1g'(x|_{T_{n-1}}) (g')^{-1}h_1^{-1}, h_2g''(x|_{T_{n-1}}) (g'')^{-1}h_2^{-1}, \one \right) \in H_n.$$
     Since $G$ is self-similar, we have $h_1, g' \in G_{n-1}$, and so $\alpha := h_1g'(x|_{T_{n-1}})(g')^{-1}h_1^{-1} \sim_{G_{n-1}} x|_{T_{n-1}}$.
     
     The argument is similar for $\widehat{B^2}$. Again, by a similar argument, we obtain $\widehat{C_i}=(\gamma_{i,1},\gamma_{i,2},\one) \in H_n$, $1 \leq i \leq r$, where every $\gamma_{i,k}\sim_{G_{n-1}} c_{i,j(i,k)}|_{T_{n-1}}$ for some $j(i,k)\in\{1,2,3\}$. We also create conjugates $\widehat{C_i'}=(\delta_{i,2},\delta_{i,1},\one)\in H_n$ of $C_i$, $1 \leq i \leq r$, with $\delta_{i,k}\sim_{G_{n-1}} c_{i,j(i,k)}|_{T_{n-1}}$ for each $k=1,2$, by conjugating each $\widehat{C_i}$ with an element $\kappa\in H_n$ such that $\kappa|_{T_1}=(1\, 2)$. Conditions~\ref{cond:Y1} and~\ref{cond:Y2} now imply that, 
     for every model generator $\ell$ of $G$, there exists a unique element $L\in\{ \widehat{A^2},\widehat{B^2},\widehat{C_i},\widehat{C_i'}: 1\leq i \leq r\}\subset H_n$
     such that $L=(\widehat{\ell}, *, \one)$ with $\widehat{\ell}\sim_{G_{n-1}} \ell|_{T_{n-1}}$.

    Now consider the group 
      $$\widehat H_n:=\langle \widehat{A^2},\widehat{B^2},\widehat{C_i},\widehat{C_i'},D^3: 1 \leq i \leq r\rangle\subset H_n,$$ and write $D^3=(d_1,d_2,d_3)$, where each $d_i$ must act transitively on $T_{n-1}$, since $D$ acts transitively on $T_n$. By the form of the generators of $\widehat H_n$, the induction hypothesis $(*_{n-1})$ implies that $\langle \alpha,\beta,d_1,\gamma_{i,1},\delta_{i,2} : 1 \leq i \leq r\rangle=G_{n-1}$, and thus every element $g\in G_{n-1}$ appears as the section at $1$ of some product of the generators of $\widehat{H}_n$. Since each such generator acts trivially on $T_1$ and $G_n \supset H_n \supset  \widehat H_n$ is self-similar, we have shown the following claim. 
    
    \emph{Claim.} For any $g\in G_{n-1}$, there exist $g',g''\in G_{n-1}$ such that $(g,g',g'')\in H_n$.

    We now show that for any $g \in G_{n-1}$ there is an element $(g,*,\one) \in H_n$, i.e., the hypothesis of Lemma~\ref{EXDOC: Le: forms of elements of G} is satisfied for $G_{n-1}$ and $H_n$.

    Indeed, we have some $p,q,u_1,\dots,u_r,z_1,\dots,z_r\in G_{n-1}$ such that $p\alpha p^{-1}=x|_{T_{n-1}}$, $q\beta q^{-1}=y|_{T_{n-1}}$, and $u_i\gamma_{i,1}u_i^{-1}=c_{i,j(i,1)}|_{T_{n-1}}$, $z_i\delta_{i,2}z_i^{-1}=c_{i,j(i,2)}|_{T_{n-1}}$ for $1 \leq i \leq r$.

    By the claim above, we can choose $p',p'',q',q''u_i',u_i'',z_i',z_i''\in G_{n-1}$ such that $P:=(p,p',p'')$, $Q:= (q,q',q''), U_i:=(u_i,u_i',u_i''), Z_i:=(z_i,z_i',z_i'')\in H_n$, $1 \leq i \leq r$. Now, by conjugation, we obtain elements in $H_n$ of the following forms:
    \begin{align*}P\widehat{A^2}P^{-1}&=(x|_{T_{n-1}},*,\one),\\
    Q\widehat{B^2}Q^{-1}&=(y|_{T_{n-1}},*,\one),\\
    U_i\widehat{C_i}U_i^{-1}&=(c_{i,j(i,1)}|_{T_{n-1}},*,\one),   \; 1 \leq i \leq r, \\
    Z_i\widehat{C_i'}Z_i^{-1}&=(c_{i,j(i,2)}|_{T_{n-1}},*,\one),  \; 1 \leq i \leq r.\end{align*}

    But then, by Condition~\ref{cond:Y2}, for each model generator $\ell$ of $G$,  we have found an element of the form $(\ell|_{T_{n-1}}, *, \one)\in H_n$.  Therefore, by multiplying these elements, we can obtain $(g,*,\one) \in H_n$ for all $g\in G_{n-1}$. Then Lemma \ref{EXDOC: Le: forms of elements of G} applies, and we also have elements of the forms $(*,g,\one),(g,\one,*),(*,\one,g),(\one,g,*)$ and $(\one,*,g)$ in $H_n$.

   {\bf Step \ref{step3-52}.} We now show that the restrictions of $ a,b,c_1,\ldots,c_r $ to $T_n$ are in $H_n$.
    
    To obtain $c_i|_{T_n}\in H_n$ for a fixed $i$, we first conjugate $C_i$ by some $h_i\in H_n$ such that $(h_iC_ih_i^{-1})|_k \sim_{G_{n-1}} c_{i,k}|_{T_{n-1}}$ for each $k\in\{1,2,3\}$, which is possible by  Corollary~\ref{cor: special-element}. By replacing $C_i$ with this conjugate, we have that $C_i = (\widehat{c}_{i,1},\widehat{c}_{i,2},\widehat{c}_{i,3}) $ with $\widehat{c}_{i,k} \sim_{G_{n-1}}c_{i,k}$, $k = 1,2,3$. Note that $\widehat{c}_{i,3}$ need not be a trivial section, and this is the reason we do not use $\widehat{C}_i$, obtained previously, but construct a new set of conjugates. However, by Condition~\ref{cond:Y3} and Corollary~\ref{cor: conjugacy-in-G}, for at least one $k\in\{1,2,3\}$ we must have $\widehat{c}_{i,k}=\one$. For simplicity, we assume that $\widehat{c}_{i,3}=c_{i,3}=\one$. (If this is not the case, the argument remains valid upon replacing 3 by either 1 or 2.) 
    Choose $u_{i,1},u_{i,2}\in G_{n-1}$ such that $u_{i,k}\gamma_{i,k}u_{i,k}^{-1}=c_{i,k}|_{T_{n-1}}$ for $k=1,2$. Since Lemma~\ref{EXDOC: Le: forms of elements of G} applies, we can find $(u_{i,1},\one,*),(\one,u_{i,2},*)\in H_n$. We then derive that
    $$(u_{i,1},\one,*)(\one,u_{i,2},*)C_i(\one,u_{i,2},*)^{-1}(u_{i,1},\one,*)^{-1}=c_i|_{T_n}\in H_n.$$

    We now show that $b|_{T_n}\in H_n$. Then $a|_{T_n}\in H_n$ follows in a similar fashion. Note that $B=g(b|_{T_n})g^{-1}$ for some $g=(g_1,g_2,g_3)\sigma\in G_n$. If $\sigma$ is non-trivial, we replace $B$ with $hg(b|_{T_n})g^{-1}h^{-1}$ for some $h\in H_n$ with $h|_{T_1}=\sigma^{-1}$, which is possible by Corollary~\ref{cor: special-element} since $H_n|_{T_1} = \Sym_3$. Now we may assume that $B=g(b|_{T_n})g^{-1}$ with $g=(g_1,g_2,g_3)$ for some $g_1,g_2,g_3\in G_{n-1}$. We write out:
    $$B=g(b|_{T_n})g^{-1}=(\one,g_2g_3^{-1},g_3yg_2^{-1})(2\,3).$$ 
    By Lemma~\ref{EXDOC: Le: forms of elements of G}, we can find $(*,g_2,\one),(*,\one,g_3)\in H_n$, and thus 
    $$(*,\one,g_3)^{-1}(*,g_2,\one)^{-1}B(*,g_2,\one)(*,\one,g_3)=(\one,\one,y|_{T_{n-1}})(2\,3)=b|_{T_n}\in H_n.$$

    We conclude that the generators $a|_{T_n},b|_{T_n},c_i|_{T_n}$, $1 \leq i \leq r$, of $G_n$ are all contained in $H_n$. Hence $G_n\subseteq H_n$, and thus we have $G_n=H_n$, finishing the induction.

    \medskip

    Suppose now that $A,B,C_1,\dots,C_r\in G$ are $G$-conjugates of $a,b,c_1,\dots,c_r$, respectively, and let $D\in G$ be an odometer. Set $H:=\llangle A,B,C_1,\dots, C_r, D\rrangle$. Since $D|_{T_n}$ acts transitively on $T_n$ and \ref{induction: invariable generation} is true for all $n\geq 0$, we conclude that $H_n=\langle A|_{T_n},B|_{T_n}, C_1|_{T_n},\dots, C_r|_{T_n}, D|_{T_n}\rangle=G_n$ for all $n\geq 0$. Lemma~\ref{lem-same-closures} then implies that $H=G$, as desired.

    {\bf Step \ref{step4-52}.} To obtain Part~\ref{invariable-generating-set}, let us suppose that $A,B,C_1,\dots,C_r$ are $G$-conjugates of the model generators $a,b,c_1,\dots,c_r$, respectively, and that $D$ is a $G$-conjugate of an odometer $d\in G$. But then $D$ is an odometer itself, and thus $\llangle A,B,C_1,\ldots,C_r,D\rrangle = G$ by Part~ \ref{inclusion-with-odometer}. It follows that $\{a,b,c_1,\ldots,c_r,d\}$ is an invariable generating set for $G$, which completes the proof of the theorem.  
\end{proof}

\subsection{Pairwise conjugacy of generators implies conjugacy of groups}\label{sec-conjugate-subgroups} 
We will now prove that, if a profinite group $H \subset W$ has a set of topological generators that are pairwise conjugate to the model generators of a  \ref{main-assumption}-restricted model group $G$ (see Definition~\ref{defn-group}), and $H$ contains an odometer, then $H$ is conjugate to $G$ in $W$. 

\begin{thm} \label{thm-simultaneous-conjugation}
Let $G=\llangle a,b,c_1,\ldots,c_r \rrangle \subset W$ be a \emph{\ref{main-assumption}}-restricted model group  as in Definition~\ref{defn-group}. Suppose $A,B,C_1,\dots,C_r\in W$ are $W$-conjugates of $a,b,c_1,\dots,c_r$, respectively, such that the group $$H := \llangle A,B,C_1,\dots,C_r\rrangle$$ contains an odometer $D$. Then there exists some $w\in W$ such that $$w^{-1}Aw,w^{-1}Bw,w^{-1}C_1w,\dots,w^{-1}C_rw$$ are $G$-conjugates of $a,b,c_1,\dots,c_r$, respectively, and $w^{-1} \, H \, w=G$.
\end{thm}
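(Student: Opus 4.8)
The plan is to reduce the existence of a single conjugator to a sequence of finite-level statements by a compactness argument, then establish those statements by induction on the tree level using the wreath-recursive structure of the model group, and finally upgrade the resulting element-wise $G$-conjugacies to the group identity $w^{-1}Hw=G$ via Theorem~\ref{thm-conjugates}.

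\emph{Compactness reduction.} For each $n\ge 0$ set
\[\Omega_n=\{w\in W:\ (w^{-1}Aw)|_{T_n}\sim_{G_n}a|_{T_n},\ (w^{-1}Bw)|_{T_n}\sim_{G_n}b|_{T_n},\ (w^{-1}C_iw)|_{T_n}\sim_{G_n}c_i|_{T_n}\text{ for all }i\}.\]
Since $\Pi_n$ is a homomorphism, $(w^{-1}gw)|_{T_n}$ depends only on $w|_{T_n}$ and $g|_{T_n}$, so each $\Omega_n$ is a union of finitely many level-$n$ cylinders, hence clopen; and $\Omega_{n+1}\subseteq\Omega_n$, because restricting a $G_{n+1}$-conjugacy to $T_n$ yields a $G_n$-conjugacy. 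Using $G=\varprojlim G_n$, a further compactness argument (applied to each generator separately) shows that $\bigcap_n\Omega_n$ equals the set of $w$ with $w^{-1}Aw\sim_G a$, $w^{-1}Bw\sim_G b$ and $w^{-1}C_iw\sim_G c_i$. Thus it suffices to prove $\Omega_n\ne\emptyset$ for every $n$; the nested nonempty clopen sets then have nonempty intersection.

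\emph{Finite-level induction.} I would prove $\Omega_n\ne\emptyset$ in the following strengthened form, by induction on $n$: for any $\tilde A\sim_{W_n}a|_{T_n}$, $\tilde B\sim_{W_n}b|_{T_n}$, $\tilde C_i\sim_{W_n}c_i|_{T_n}$ whose generated group contains an element acting transitively on $T_n$, there is $w\in W_n$ with $w^{-1}\tilde A w\sim_{G_n}a|_{T_n}$, and similarly for $\tilde B,\tilde C_i$. For the actual $A,B,C_i$ this applies because $D|_{T_n}$ is transitive and lies in $\langle A|_{T_n},\dots\rangle$. The induction step passes to sections: by Proposition~\ref{prop: conjugacy-in-W}, the cyclic section products of $\tilde A,\tilde B$ over their transposition cycles are $W_{n-1}$-conjugate to $x|_{T_{n-1}},y|_{T_{n-1}}$ with the fixed-point sections trivial, while the sections of each $\tilde C_i$ are $W_{n-1}$-conjugate (up to a permutation) to $c_{i,1},c_{i,2},c_{i,3}$. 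Conditions~\ref{cond:Y1} and~\ref{cond:Y2} guarantee that, assembled together, these sections realise each model generator exactly once up to $W_{n-1}$-conjugacy, and the cube of the transitive element supplies a transitive section one level down (as in the proof of Theorem~\ref{thm-conjugates}). This produces a valid instance of the induction hypothesis at level $n-1$, yielding a conjugator $w'\in W_{n-1}$.

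\emph{Assembly and the main obstacle.} I would then build $w\in W_n$ from $w'$, together with a first-level permutation $\sigma_w$ (available because the first-level image is $\Sym_3$) normalising the transposition actions of $\tilde A,\tilde B$, and with the auxiliary elements produced by Lemma~\ref{lem: conjugacy-representative} relating the cyclic section products to individual sections. The desired conjugacies $w^{-1}\tilde A w\sim_{G_n}a|_{T_n}$ (and so on) would then be verified using Corollary~\ref{cor: special-element}, Corollary~\ref{cor: conjugacy-in-G}, and the self-replicating property of $G$ from Lemma~\ref{lem: model-self-replicativity}, which is exactly what allows the conjugating elements to be realised inside $G_n$ rather than merely in $W_n$. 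I expect this assembly to be the main obstacle: the delicate point is to route the \emph{single} level-$(n-1)$ conjugator $w'$ and the auxiliary elements through the three first-level subtrees so that one element $w$ conjugates \emph{all} of $\tilde A,\tilde B,\tilde C_1,\dots,\tilde C_r$ simultaneously into the $G_n$-classes of the model generators, respecting the recurrent section pattern enforced by~\ref{cond:Y1}--\ref{cond:Y2}; a secondary technical point is the degenerate case in which $\tilde A$ and $\tilde B$ have equal first-level transpositions, which cannot be normalised to $(1\,2)$ and $(2\,3)$ and must be treated separately. Once $w\in W$ is obtained with $w^{-1}Aw\sim_G a$, $w^{-1}Bw\sim_G b$ and $w^{-1}C_iw\sim_G c_i$, these are $G$-conjugates of the model generators, and $w^{-1}Dw$ is an odometer lying in $w^{-1}Hw=\llangle w^{-1}Aw,\dots,w^{-1}C_rw\rrangle$. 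Theorem~\ref{thm-conjugates}\ref{inclusion-with-odometer} then gives $\llangle w^{-1}Aw,\dots,w^{-1}C_rw,w^{-1}Dw\rrangle=G$, and since $w^{-1}Dw$ already lies in the group generated by the other conjugates, this reads $w^{-1}Hw=G$, as required.
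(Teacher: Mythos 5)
Your skeleton coincides with the paper's own proof: a level-$n$ statement asserting a \emph{single} $w_n\in W_n$ that conjugates all generators simultaneously into the $G_n$-classes of the model generators, established by induction on $n$, followed by a compactness passage to the limit and an application of Corollary~\ref{EXDOC: Thm: invariable generation} (equivalently, Theorem~\ref{thm-conjugates},\ref{inclusion-with-odometer}) to upgrade the element-wise conjugacies to $w^{-1}Hw=G$. Your nested-clopen-sets packaging $\Omega_n$ of the compactness step is a harmless variant of the paper's subsequence extraction, and your strengthened induction statement is equivalent to the paper's $(*_n)$. However, the step you explicitly defer --- assembling the level-$n$ conjugator from the level-$(n-1)$ data --- is precisely where the content of the theorem lies, and you do not supply the idea that makes it work. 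The induction hypothesis delivers, along with the common $w_{n-1}$, \emph{individual} correction elements $X',Y',Z'_{i,j}\in G_{n-1}$ with, e.g., $\alpha=(w_{n-1}X')(x|_{T_{n-1}})(w_{n-1}X')^{-1}$; the level-$n$ conjugator is then simply the diagonal $w_n=(w_{n-1},w_{n-1},w_{n-1})$, and the per-generator discrepancies are absorbed into elements of $G_n$ built coordinate-wise from these corrections: $X=(X',X',*)$, $Y=(*,Y',Y')$, and $Z_i=(Z'_{i,1},Z'_{i,2},Z'_{i,3})$, all lying in $G_n$ by the self-replication Lemma~\ref{lem: model-self-replicativity}. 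Crucially, self-replication only lets one prescribe two of the three coordinates, and it is Condition~\ref{cond:Y3} --- each $c_i$ has a trivial section, whose correction $Z'_{i,j}$ may be chosen freely --- that makes the third coordinate available for $Z_i$. Without this mechanism (corrections live in $G_{n-1}$ and are promoted to $G_n$ coordinate-wise, against a diagonal $w_n$), the routing problem you flag as ``the main obstacle'' has no solution, so the proposal is incomplete at its central step.

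Two further points. The degenerate case you set aside is vacuous: if $\tilde A|_{T_1}=\tilde B|_{T_1}$, then, since every $\tilde C_i$ acts trivially on $T_1$, the first-level image of the generated group is cyclic of order $2$ and cannot contain the $3$-cycle induced by the transitive element; hence the two transpositions are automatically distinct and can be normalized to $(1\,2)$ and $(2\,3)$. Second, your normalization conjugates the $\tilde C_i$ \emph{individually} (by elements $g_i\in\langle\tilde A,\tilde B\rangle$, different for each $i$) to reorder their first-level sections via Corollary~\ref{cor: special-element}; unlike a common conjugation of the whole tuple, this is not obviously harmless, and the paper justifies it only after the fact: once the conclusion holds for the modified tuple, the relations for $\tilde A,\tilde B$ force $\langle\tilde A,\tilde B\rangle\subset w_n'G_n(w_n')^{-1}$, so each $g_i$ equals $w_n'h_i(w_n')^{-1}$ with $h_i\in G_n$, and replacing $Z_i'$ by $h_i^{-1}Z_i'$ restores the statement for the original $\tilde C_i$. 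This bookkeeping must appear in a complete write-up as well.
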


The strategy of the proof is as follows:
\begin{enumerate}
    \item \label{step1-55} The main technical step is to prove the following statement: For each $n \geq 0$, we can find an element $w_n\in W_n$ that simultaneously conjugates each generator \linebreak $L\in \{A|_{T_n},B|_{T_n},C_1|_{T_n},...,C_r|_{T_n}\}$ of $H_n$ to a $G_n$-conjugate of the corresponding generator $\ell\in \{a|_{T_n},b|_{T_n},c_1|_{T_n},...,c_r|_{T_n}\}$ of $G_n$. The conjugating elements $w_n\in W_n$ and $g_\ell\in G_n$, such that $w_n^{-1}Lw_n=g_\ell\ell g_\ell^{-1}$, are constructed by induction on the level $n$. 
    
    \item \label{step2-55} First, we conjugate the given generators of $H_n$ into a more convenient form: specifically, we want each first-level section of each generator $L$ of $H_n$ to be a conjugate of the respective section of the corresponding generator $\ell$ of $G_n$. The main tools used in this step are  Corollary~\ref{cor: special-element} and Proposition~\ref{prop: conjugacy-in-W}. We also verify that it is sufficient to prove the induction step for these newly  constructed elements. 
    
    \item \label{step3-55} Next, we implement the induction step by using the special form of our new generators of $H_n$. To construct the desired conjugating elements $w_n\in W_n$ and $g_\ell\in G_n$, we invoke the induction hypothesis, which provides us with the sections of $w_n$ and $g_\ell$'s, and  Lemma~\ref{lem: model-self-replicativity}, which ensures that $G_n$ contains elements with such sections. 

    \item \label{step4-55} Finally, having constructed a sequence $\{w_n\}$ of  conjugating elements as in Step~\ref{step1-55}, we use compactness to construct an element $w\in W$ that simultaneously conjugates each generator of $H$ to a $G$-conjugate of the corresponding generator of $G$. The desired equality $w^{-1}Hw=G$ then follows from Corollary~\ref{EXDOC: Thm: invariable generation}.
\end{enumerate}

\begin{proof}
Let $G=\llangle  a,b,c_1,\ldots,c_r\rrangle$ be a \ref{main-assumption}-restricted model group as in Definition~\ref{defn-group}.

%{\bf Inductive hypothesis.} 
{\bf Step \ref{step1-55}.}
    We will show the following statement for all $n\geq 0$:

   \begin{enumerate}[label=\normalfont{$(*_n)$}]
    \item\label{induction: simultaneus conjugation}
        For any $A,B,C_1,\dots,C_r\in W_n$ conjugate to $a|_{T_n},b|_{T_n},c_1|_{T_n},\dots,c_r|_{T_n}$, respectively, in $W_n$, 
        such that $\langle A,B,C_1,\dots,C_r\rangle$ contains an element $D$ acting transitively on $T_n$, 
        there exist $w_n\in W_n$ and $X,Y,Z_1,\dots,Z_r \in G_n$ such that 
          \begin{align*}
              A &=(w_nX)(a|_{T_n})(w_nX)^{-1}, & \\  B& =(w_nY)(b|_{T_n})(w_nY)^{-1}, \, \textrm{and} \\ C_i& =(w_nZ_i)(c_i|_{T_n})(w_nZ_i)^{-1}, 1 \leq i \leq r. &  
          \end{align*} 
    \end{enumerate}

    As $(*_0)$ trivially holds, we assume that $(*_{n-1})$ is true for some $n\geq 1$. Let us now consider arbitrary $A,B,C_1,\dots,C_r\in W_n$ with $A\sim_{W_n} a|_{T_n}, B\sim_{W_n} b|_{T_n}, C_1\sim_{W_n} c_1|_{T_n}, \dots, C_r\sim_{W_n} c_r|_{T_n}$, and suppose that $H_n:=\langle A,B,C_1,\dots,C_r\rangle$ contains an element acting transitively on $T_n$.

    {\bf Step \ref{step2-55}.}
    First, note that \ref{induction: simultaneus conjugation} is true for the given $A,B,C_1,\ldots, C_r$ if and only if it is true for their conjugates 
    by the same element $g\in W_n$, by choosing $gw_n$ instead of $w_n$. Moreover, since $H_n$ contains a transitive element $D$, and all $C_i$, $1 \leq i \leq r$, act trivially on $T_1$, we must have $A|_{T_1}\neq B|_{T_1}$, that is, $A|_{T_1}$ and $B|_{T_1}$ represent distinct transpositions.

    Choose $\rho\in {\mathcal S}_3$ such that $\rho (A|_{T_1})\rho^{-1}=(1\,2)$ and $\rho(B|_{T_1})\rho^{-1}=(2\,3)$. By replacing the given $A,B,C_i$, $1 \leq i \leq r$, with their conjugates by $\rho$, we may assume that $A|_{T_1}=(12)$ and $B|_{T_1}=(23)$. By Corollary~\ref{cor: special-element}, we then have $A=(u_1,u_2,\one)(1\,2)$ and $B=(\one,v_2,v_1)(2\,3)$ for some $u_i,v_j\in W_{n-1}$. Now we conjugate every $A,B,C_1,\ldots,C_r$ by $g:=(u_2,\one,v_2)\in W_n$. For $A$ and $B$, this will give: $$gAg^{-1}=(u_2,\one,v_2)(u_1,u_2,\one)(12)(u_2^{-1},\one,v_2^{-1})=(u_2u_1,\one,\one)(12)$$ and $$gBg^{-1}=(u_2,\one,v_2)(\one,v_2,v_1)(23)(u_2^{-1},\one,v_2^{-1})=(\one,\one,v_2v_1)(23).$$
    To avoid cumbersome notation, we again replace the given elements $A,B,C_1,\ldots,C_r$ with their conjugates by $g$. By Proposition~\ref{prop: conjugacy-in-W}, we can now write $A=(\alpha,\one,\one)(12)$ and $B=(\one,\one,\beta)(23)$, where $\alpha\sim_{W_{n-1}} x|_{T_{n-1}}$ and $\beta\sim_{W_{n-1}} y|_{T_{n-1}}$.

    Next, we conjugate $C_1,\ldots,C_r$ further to arrange their sections in a convenient order. First, note that the first-level sections of each $C_i$ are pairwise conjugate to the first-level sections of $c_i|_{T_n}$ in some order by Proposition~\ref{prop: conjugacy-in-W}. Then, since $\langle A|_{T_1},B|_{T_1}\rangle=\Sym_3$, we can use Corollary~\ref{cor: special-element} to find some $g_i\in \langle A,B\rangle$ such that the conjugate $C_i':=g_iC_ig_i^{-1}=(\gamma_{i,1},\gamma_{i,2},\gamma_{i,3})$ satisfies $\gamma_{i,j}\sim_{W_{n-1}} c_{i,j}|_{T_{n-1}}$ for each $j\in \{1,2,3\}$. Moreover, by construction of the $C_i$'s, we still have $H_n=\langle A,B,C_1',\dots,C_r'\rangle$. 

     We now show that if \ref{induction: simultaneus conjugation} holds for the elements $A,B,C_1',\dots,C_r'$, then it also holds for the given $A,B,C_1,\dots,C_r$. Indeed, suppose we have proven $(*_n)$ for $A,B,C_1',\dots,C_r'$. Then there exist $w_n'\in W_n$, and elements $X',Y',Z_1',\dots,Z_r'\in G_n$ such that 
     \begin{equation}\label{eq: from C_primes to C}
      \begin{aligned}
          A & =(w_n'X')(a|_{T_n})(w_n'X')^{-1}, &\\
          B & =(w_n'Y')(b|_{T_n})(w_n'Y')^{-1}, \, \textrm{and} & \\
          C_i' & =(w_n'Z_i')(c_i|_{T_n})(w_n'Z_i')^{-1}, \, 1 \leq i \leq r .&
      \end{aligned}
      \end{equation}
    
  The first two equations of \eqref{eq: from C_primes to C} imply that $\langle A ,B\rangle \subset w_n'G_n {w_n'}^{-1}$, and thus for each $1 \leq i \leq r$ we can choose $h_i\in G_n$ such that $g_i=w_n'h_i(w_n')^{-1}$. Now let $w_n:=w_n', X:=X',Y:=Y'$, and $Z_i:=h_i^{-1}Z_i'$, $1 \leq i \leq r$. Then $A=(w_nX)(a|_{T_n})(w_nX)^{-1}$ and $B=(w_nY)(b|_{T_n})(w_nY)^{-1}$. At the same time, for each $C_i$ we have: 
  \begin{align*}
            C_i &= g_i^{-1}C_i'g_i\\
            &= w'_nh_i^{-1}(w'_n)^{-1}(w'_nZ_i')(c_i|_{T_n})(w'_nZ_i')^{-1}w'_nh_i(w'_n)^{-1}\\
            &= w'_nh_i^{-1}Z_i'(c_i|_{T_n})Z_i'^{-1}h_i(w'_n)^{-1}\\
            &= (w_nZ_i)(c_i|_{T_n})(w_nZ_i)^{-1}.
        \end{align*} 
        It follows that \ref{induction: simultaneus conjugation} is true for $A,B,C_1,\dots,C_r$ as well. So we can replace the given $C_i$ by $C_i'$ for each $1\leq i \leq r$. Observe also that the group $H_n=\langle A,B,C_1,\dots,C_r\rangle$ does not change and still contains an element acting transitively on $T_n$, because the conjugate of a transitive element is still transitive.

        %{\bf Inductive step.} 
        {\bf Step \ref{step3-55}.} 
        By the discussion above, we may assume that $A=(\alpha, \one, \one) (1\, 2)$,  $B=(\one, \one, \beta) (2\, 3)$, and $C_i=(\gamma_{i, 1}, \gamma_{i,2}, \gamma_{i,3})$, $1\leq i\leq r$, where $\alpha\sim_{W_{n-1}} x|_{T_{n-1}}$, $\beta\sim_{W_{n-1}} y|_{T_{n-1}}$, and  $\gamma_{i,j}\sim_{W_{n-1}} c_{i,j}|_{T_{n-1}}$ for all $1\leq i \leq r,j\in \{1,2,3\}$. Moreover, the group $H_n=\langle A,B,C_1,\dots,C_r\rangle$ contains an element $D$ acting transitively on $T_n$.

        Set $\delta:=D^3|_1$. By the assumption on the form of generators in the previous paragraph,  
        $\delta\in \langle \alpha, \beta, \gamma_{i,j}\rangle$ and it acts transitively on $T_{n-1}$. Furthermore,  Conditions~\ref{cond:Y1} and \ref{cond:Y2} imply that, besides the trivial element, the set $\{\alpha, \beta, \gamma_{i,j}\}$ contains a conjugate of each of the generators $a|_{T_{n-1}}, b|_{T_{n-1}}, c_1|_{T_{n-1}}, \dots,  c_r|_{T_{n-1}}$ of $G_{n-1}$, and only those. It follows that we may apply the induction hypothesis to the corresponding group $\langle \alpha, \beta, \gamma_{i,j}\rangle$. Hence, we must have $w_{n-1}\in W_{n-1}$ and $X',Y',Z_{i,j}'\in G_{n-1}$ such that 
         \begin{align*}
             \alpha & =(w_{n-1}X')(x|_{T_{n-1}})(w_{n-1}X')^{-1}, & \\ 
             \beta & =(w_{n-1}Y')(y|_{T_{n-1}})(w_{n-1}Y')^{-1},\, \textrm{and} & \\
             \gamma_{i,j} &=(w_{n-1}Z_{i,j}')(c_{i,j}|_{T_{n-1}})(w_{n-1}Z_{i,j}')^{-1}, \, 1\leq i \leq r, \, j \in \{1,2,3\}. &
         \end{align*}
        Here, if a section $x$, $y$, or $c_{i,j}$ is trivial, we have the freedom  to choose the corresponding element from $X',Y',Z_{i,j}'$ in an arbitrary way.

    Using Lemma~\ref{lem: model-self-replicativity}, we can now find $X=(X',X',*)=(X',\one,*)(\one,X',*)\in G_n$ and, similarly, $Y=(*,Y',Y')\in G_n$. By Condition~\ref{cond:Y3}, for each $1\leq i\leq r$, the section $c_{i,j}$ is trivial for some $j\in\{1,2,3\}$, and thus the corresponding $Z'_{i,j}\in G_{n-1}$ can be chosen in such a way that $Z_i:=(Z'_{i,1},Z'_{i,2},Z'_{i,3})\in G_n$ using the same lemma. Finally, let $w_n:=(w_{n-1},w_{n-1},w_{n-1})\in W_n$. We then have: 
     \begin{align*}
         (w_nX)(a|_{T_n})(w_nX)^{-1} &=\big((w_{n-1}X')(x|_{T_{n-1}})(w_{n-1}X')^{-1},\one,\one\big)(1\,2)=(\alpha,\one,\one)(1\,2)=A, \\
         (w_nY)(b|_{T_n})(w_nY)^{-1} & =\big(\one,\one,(w_{n-1}Y')(y|_{T_{n-1}})(w_{n-1}Y')^{-1}\big)(2\,3)=(\one,\one,\beta)(2\,3)=B,
     \end{align*}
     and
     \begin{align*}
         (w_nZ_i)(c_i|_{T_n})(w_nZ_i)^{-1} &=(w_{n-1}Z'_{i,1},w_{n-1}Z'_{i,2},w_{n-1}Z'_{i,3})(c_{i,1}|_{T_{n-1}},c_{i,2}|_{T_{n-1}},c_{i,3}|_{T_{n-1}}) \\ &(w_{n-1}Z'_{i,1},w_{n-1}Z'_{i,2},w_{n-1}Z'_{i,3})^{-1}  =(\gamma_{i,1},\gamma_{i,2},\gamma_{i,3})=C_i,
     \end{align*}
    for all $1\leq i \leq r$. This finishes the induction step, proving $(*_n)$ for all $n\geq0$.

%   {\bf Final step.} 
    {\bf Step \ref{step4-55}.} 
    Suppose now that $A,B,C_1,\dots,C_r\in W$ are $W$-conjugates of $a,b,c_1,\dots,c_r$, respectively, such that the group $H := \llangle A,B,C_1,\dots,C_r\rrangle$ contains an odometer $D$. For each $n\geq 0$ and  $\ell_n\in \{a|_{T_n},b|_{T_n},c_1|_{T_n},...,c_r|_{T_n}\}$, let $w_n\in W_n$ and $g_{\ell_n}\in G_n$ 
    be the elements such that the conjugate $(w_ng_{\ell_n})(\ell|_{T_n})(w_ng_{\ell_n})^{-1}$ is equal to the corresponding element $L_n\in\{A|_{T_n},B|_{T_n},C_1|_{T_n},...,C_r|_{T_n}\}$. The existence of such conjugating elements is guaranteed by applying \ref{induction: simultaneus conjugation} to the restrictions $A|_{T_n},B|_{T_n},C_1|_{T_n},...,C_r|_{T_n}$.

    For each $n\geq 0$ and  $\ell\in \{a, b, c_1, \dots, c_r\}$, let $\widetilde w_n\in W$ and $\widetilde g_{\ell_n}\in G$ be some elements such that $\widetilde w_n|_{T_n} = w_n$ and $\widetilde g_{\ell_n}|_{T_n} = g_{\ell_n}$.
    By compactness of $W$ and $G$, we can find convergent subsequences of $\{\widetilde w_n\}$ and $\{\widetilde g_{\ell_n}\}$'s, all indexed by the same subsequence. Let $w\in W$ and $g_\ell\in G$ denote the respective limits. By the convergence, for each model generator $\ell\in \{a, b, c_1, \dots, c_r\}$, we have that the conjugate
    $(wg_{\ell})\ell(wg_{\ell})^{-1}$ is equal to the corresponding element $L\in\{A,  B, C_1, \dots, C_r\}$. It follows that the group $w^{-1}Hw$ is generated by $G$-conjugates of the model generators of $G$. Since $w^{-1}Hw$ contains the odometer $w^{-1}Dw$, we conclude that $w^{-1}Hw=G$ by Corollary~\ref{EXDOC: Thm: invariable generation}. This completes the proof of Theorem~\ref{thm-simultaneous-conjugation}.
\end{proof}

\subsection{Proof of Theorem~\ref{main-theorem}}\label{sec-proof-main-theorem}

Let $K$ be a number field, and let $f\in K(z)$ be a PCF cubic polynomial satisfying Assumption~\ref{main-assumption}.

\emph{Part \ref{item-MT-1}}. Suppose $S:=\{g_p: p\in P(f)\}$ is a standard generating set of the profinite geometric iterated monodromy group $G^{geom}(f)$. Then $G^{geom}(f)=\llangle g_p: p \in P(f) \setminus \{\infty\}\rrangle$ by Corollary~\ref{cor: cubic-model-generators}, and, by (the proof of) Proposition~\ref{prop: model-group-from-map}, the standard generators $g_p$, $p \in P(f) \setminus \{\infty\}$, are $W$-conjugates of the model generators of a \ref{main-assumption}-restricted model group. More precisely, there exist a \ref{main-assumption}-restricted model group $G=G^{model}(f)=\llangle a, b, c_1, \dots, c_r\rrangle$ as in Definition~\ref{defn-group} and a bijection 
   $\varphi\colon \{g_p : p \in P(f) \setminus \{\infty\}\} \to \{a,b,c_1,\dots, c_r\}$ such that $g_p\sim_{W} \varphi(g_p)$ for each $p\in P(f) \setminus \{\infty\}$. In particular, $r=|P(f)|-3$, and the group $G$ is determined by the ramification portrait of $f$. Since the element $g_\infty \in \llangle g_p : p \in P(f) \setminus \{\infty\}\rrangle = G^{geom}(f)$, corresponding to the $\infty$-petal, is an odometer by Corollary~\ref{cor: IMG(poly) has odometer}, Theorem~\ref{thm-simultaneous-conjugation} implies that there exists $w \in W$ such that $w^{-1}G^{geom}(f)w=G$, and, furthermore, that each $w^{-1}g_pw$, $p\in P(f)\setminus \{\infty\}$, is a $G$-conjugate of the corresponding model generator $\varphi(g_p)$ of $G$. Since $w^{-1}g_\infty w\in G$ is an odometer, Part~\ref{invariable-generating-set} of Theorem~\ref{thm-conjugates} implies that the set $w^{-1}Sw=\{w^{-1}g_pw: p\in P(f)\}$ is an invariable generating set for $G$. It then readily follows that $S=\{g_p: p\in P(f)\}$ is an invariable generating set for $G^{geom}(f)=wGw^{-1}$.

\emph{Part~\ref{item-MT-2}}. 
 Let $f'$ be another cubic polynomial over a number field such that its ramification portrait is isomorphic to the ramification portrait of $f$. In particular, $f'$ is PCF and satisfies Assumption~\ref{main-assumption}.
 By the proof of Part~\ref{item-MT-1}, there are \ref{main-assumption}-restricted model groups $G, G'$ and elements $w,w' \in W$ such that $w^{-1}G^{geom}(f)w=G$ and $(w')^{-1}G^{geom}(f'){w'}=G'$. As the ramification portraits of $f$ and $f'$ are isomorphic, we can assume that $G=G'$; see Remark~\ref{rem: can-model-group}. Therefore, $G^{geom}(f)$ and $G^{geom}(f')$ are conjugate by $w'w^{-1} \in W$. This completes the proof of Theorem~\ref{main-theorem}. 

\section{Branch and torsion properties}\label{branch-torsion}

We now prove Theorem~\ref{thm-reg-branch} and Corollary~\ref{polynomial-branch}, which concern the branch and torsion properties in \ref{main-assumption}-restricted model groups and in profinite geometric iterated monodromy groups associated to cubic PCF polynomials satisfying Assumption~\ref{main-assumption}.

\subsection{Branch properties} We refer the reader to Definition~\ref{weakly-branch} for the notions of regular branch and regular weakly branch subgroups of the automorphism group $\AT$. Below, we show that \ref{main-assumption}-restricted model groups are regular branch over the closure of their commutator subgroups. The proof consists of the following steps:  First, we show a weaker statement that if $G=\llangle a,b,c_1,\ldots,c_r\rrangle$ satisfies Definition~\ref{defn-group}, then it is regular weakly branch over $\mathfrak{C}:=\overline{[G,G]}$; see Proposition~\ref{Prop: G is branch over [G,G]}. As a by-product, we obtain that the corresponding countable dense subgroup $\Gamma := \langle a,b,c_1,\ldots,c_r\rangle$ is regular weakly branch over its commutator subgroup $[\Gamma,\Gamma]$; see Corollary~\ref{cor-weakly-branch-countable}. Afterward, we establish that $\mathfrak{C}$ has finite index in $G$, and thus $G$ is regular branch over $\mathfrak{C}$, in Theorem~\ref{Thm: M_r is always regular branch}.

\begin{prop}\label{Prop: G is branch over [G,G]}
Let $G$ be a \emph{\ref{main-assumption}}-restricted model group, and
    let $\mathfrak C:=\overline{[G,G]}$ denote the closure of its commutator subgroup. Then $G$ is regular weakly branch over $\mathfrak C$. 
    Moreover, $\mathfrak C$ contains the infinite iterated wreath product $$[\Alt_3]^\infty:=\cdots \wr \Alt_3\wr \Alt_3\wr \Alt_3$$ embedded into $\AT=[\Sym_3]^\infty$ via the natural embedding $\Alt_3\hookrightarrow  \Sym_3$,  where $\Alt_3$ denotes the alternating subgroup of $\Sym_3$.
\end{prop}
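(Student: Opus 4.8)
The plan is to reduce the entire statement to the single assertion that $\mathfrak{C}$ is \emph{regular weakly branch}, i.e. that $\mathfrak{C}\times\mathfrak{C}\times\mathfrak{C}\subseteq\mathfrak{C}$ in the sense of Definition~\ref{weakly-branch}. Non-triviality of $\mathfrak{C}$ is immediate, since $[a,b]=(1\,2\,3)\in[G,G]$ by Part~\ref{item: model-iii} of Lemma~\ref{lemma-model-groups}; and I will show at the end that the inclusion $[\Alt_3]^\infty\subseteq\mathfrak{C}$ is a formal consequence of weak branching together with $(1\,2\,3)\in\mathfrak{C}$. The engine of the argument is Corollary~\ref{cor: (g,ginv,1)}, which guarantees that $(g,g^{-1},\one)$ and all of its $\Sym_3$-conjugates lie in $[G,G]$ for every $g\in G$.

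The key computation is to collapse two of the three coordinates. For arbitrary $g_1,g_2\in G$ set $u:=(g_1,g_1^{-1},\one)$ and $v:=(\one,g_2,g_2^{-1})$; both lie in $[G,G]$ by Corollary~\ref{cor: (g,ginv,1)}, the second being the cyclic $\Sym_3$-conjugate of $(g_2,g_2^{-1},\one)$. Since $u$ and $v$ act trivially on the first level, their commutator is computed coordinatewise, and a direct check gives
\[
[u,v]=\bigl(\one,\,[g_1^{-1},g_2],\,\one\bigr)\in[G,G].
\]
As $g_1,g_2$ range over $G$, the elements $[g_1^{-1},g_2]$ exhaust a generating set of $[G,G]$; hence the subgroup $\{h\in G:(\one,h,\one)\in[G,G]\}$ contains $[G,G]$. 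Passing to closures (the map $h\mapsto(\one,h,\one)$ is continuous and $\mathfrak{C}$ is closed) yields $(\one,h,\one)\in\mathfrak{C}$ for every $h\in\mathfrak{C}$. Conjugating by the rooted cycle $(1\,2\,3)\in\mathfrak{C}$ cyclically permutes the coordinates, so $(h,\one,\one)$ and $(\one,\one,h)$ also lie in $\mathfrak{C}$; multiplying the three embeddings gives $(h_1,h_2,h_3)\in\mathfrak{C}$ for all $h_1,h_2,h_3\in\mathfrak{C}$, which is exactly weak branching.

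Finally, I would deduce $[\Alt_3]^\infty\subseteq\mathfrak{C}$ by induction on the level $n$, proving $[\Alt_3]^n\subseteq\mathfrak{C}_n$ inside $W_n$, where $\mathfrak{C}_n:=\{c|_{T_n}:c\in\mathfrak{C}\}$. The base case is $\Alt_3=\langle(1\,2\,3)\rangle\subseteq\mathfrak{C}_1$. For the step, any $g\in[\Alt_3]^n=[\Alt_3]^{n-1}\wr\Alt_3$ has the form $(g_1,g_2,g_3)\sigma$ with $\sigma\in\Alt_3$ and $g_i\in[\Alt_3]^{n-1}\subseteq\mathfrak{C}_{n-1}$; lifting each $g_i$ to an element of $\mathfrak{C}$, weak branching places $(g_1,g_2,g_3)$ in $\mathfrak{C}_n$, and multiplying by $\sigma\in\mathfrak{C}$ shows $g\in\mathfrak{C}_n$. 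Since $\mathfrak{C}$ is closed we have $\mathfrak{C}=\varprojlim\mathfrak{C}_n$, and $[\Alt_3]^\infty=\varprojlim[\Alt_3]^n$, so the level-wise inclusions assemble to the claimed embedding.

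The genuinely delicate point is the isolation step: a naive commutator such as $[(g_1,g_1^{-1},\one),(g_2,g_2^{-1},\one)]$ returns an element whose \emph{second} coordinate is again a nontrivial commutator, so the choice $v=(\one,g_2,g_2^{-1})$ — placing the surviving $g_2$ in precisely the coordinate that the first factor leaves untouched — is what forces two of the three coordinates down to $\one$. Everything else is bookkeeping; the only real care needed is to track which elements lie in $[G,G]$ versus its closure $\mathfrak{C}$, and to invoke compactness of $W$ (equivalently, closedness of $\mathfrak{C}$) when passing from the dense subgroup to $\mathfrak{C}$.
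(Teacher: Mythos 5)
Your proof is correct and follows essentially the same route as the paper's: the paper commutes the elements $(g,p,\one)$ and $(h,\one,q)$ supplied by Lemma~\ref{lem: model-self-replicativity} to isolate $([g,h],\one,\one)$, while you commute the staggered elements $(g_1,g_1^{-1},\one)$ and $(\one,g_2,g_2^{-1})$ from Corollary~\ref{cor: (g,ginv,1)} (itself a consequence of that lemma, so no circularity) to isolate $(\one,[g_1^{-1},g_2],\one)$ --- the same two-coordinate-killing trick. Thereafter both arguments spread to all three coordinates by conjugating with $(1\,2\,3)=[a,b]\in\mathfrak C$ and passing to closures, and your level-wise inverse-limit derivation of $[\Alt_3]^\infty\subseteq\mathfrak C$ is an equivalent rephrasing of the paper's induction-plus-compactness argument.
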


\begin{proof}
    Suppose $G = \llangle a,b,c_1,\ldots,c_r\rrangle$, where $r\geq 0$ and  $a,b,c_1,\ldots,c_r$ are as in Definition~\ref{defn-group}. Then $[a,b] = a^{-1}b^{-1}ab=(1\,2\,3) \in \mathfrak C$ by Part~\ref{item: model-iii} of Lemma~\ref{lemma-model-groups}. 

    Let $g,h\in G$ be arbitrary. By Lemma~\ref{lem: model-self-replicativity}, there exist $p,q\in G$ such that $(g,p,\one),(h,\one,q)\in G$. Then
    \begin{align}\label{commutator-sections}
        [(g,p,\one),(h,\one,q)]=(g^{-1}h^{-1}gh,p^{-1}p,q^{-1}q)=([g,h],\one,\one).
    \end{align} 
    It follows that $[G,G]\times \{\one\}\times \{\one\}\subset \mathfrak C$. By taking the closure of $[G,G]$, we obtain that $\mathfrak C\times \{\one\}\times \{\one\}\subset \mathfrak C$. Finally, by conjugation with $(123)\in \mathfrak C$, we get $\mathfrak C\times \mathfrak C\times \mathfrak C\subset \mathfrak C$, and thus $G$ is regular weakly branch over $\mathfrak{C}$. 
    
    As $\Alt_3=\langle (123)\rangle \subset \mathfrak C$, we conclude that the group $\mathfrak C$ contains the $n$-fold wreath product $[\Alt_3]^n := \Alt_3 \wr \cdots \wr \Alt_3$ for each $n\geq 1$. The groups $[\Alt_3]^n$ form an increasing sequence of subgroups of $\mathfrak C$, and the closure of $\bigcup_{n\geq1} [\Alt_3]^n$ is a compact subgroup of $\mathfrak C$, equal to the infinite iterated wreath product 
    $[\Alt_3]^\infty$ of alternating groups. This completes the proof of the proposition. 
\end{proof}

\begin{cor}\label{cor-weakly-branch-countable}
     Let $\Gamma:=\langle a,b,c_1,\ldots,c_r\rangle$ be a countable dense subgroup of a \emph{\ref{main-assumption}}-restricted model group $G=\llangle a,b,c_1,\ldots,c_r\rrangle$ as in Definition \ref{defn-group}. Then $\Gamma$ is weakly regular branch over its commutator subgroup $[\Gamma,\Gamma]$.
\end{cor}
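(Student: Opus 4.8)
The plan is to mirror the proof of Proposition~\ref{Prop: G is branch over [G,G]} almost verbatim, but carried out inside the countable group $\Gamma$ and its ordinary (non-closed) commutator subgroup $[\Gamma,\Gamma]$, thereby dispensing with the closure and compactness steps needed in the profinite case. This works precisely because every auxiliary ingredient is already available for $\Gamma$: Lemma~\ref{lem: model-self-replicativity} explicitly states its conclusion for the countable dense subgroup, and Part~\ref{item: model-iii} of Lemma~\ref{lemma-model-groups} places $(1\,2\,3)=[a,b]$ in $\Gamma$.

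First I would note that $[\Gamma,\Gamma]$ is nontrivial, since $[a,b]=(1\,2\,3)\in[\Gamma,\Gamma]$ by Part~\ref{item: model-iii} of Lemma~\ref{lemma-model-groups}. Then, for arbitrary $g,h\in\Gamma$, the countable form of Lemma~\ref{lem: model-self-replicativity} provides $p,q\in\Gamma$ with $(g,p,\one),(h,\one,q)\in\Gamma$, and the same commutator identity as in~\eqref{commutator-sections},
\[
[(g,p,\one),(h,\one,q)]=([g,h],\one,\one),
\]
shows $([g,h],\one,\one)\in[\Gamma,\Gamma]$ for every such pair.

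The only point requiring a moment's care is the passage from single commutators to the full subgroup in the first coordinate: writing an arbitrary $w\in[\Gamma,\Gamma]$ as a product $w=[g_1,h_1]\cdots[g_k,h_k]$, one has $(w,\one,\one)=\prod_{i}([g_i,h_i],\one,\one)\in[\Gamma,\Gamma]$, so that $[\Gamma,\Gamma]\times\{\one\}\times\{\one\}\subset[\Gamma,\Gamma]$. Conjugating $(w,\one,\one)$ by $(1\,2\,3)\in[\Gamma,\Gamma]$ and by its square then yields $(\one,\one,w)$ and $(\one,w,\one)$ in $[\Gamma,\Gamma]$, and multiplying the three forms gives $(w_1,w_2,w_3)\in[\Gamma,\Gamma]$ for arbitrary $w_1,w_2,w_3\in[\Gamma,\Gamma]$. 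This is exactly the inclusion $[\Gamma,\Gamma]\times[\Gamma,\Gamma]\times[\Gamma,\Gamma]\subset[\Gamma,\Gamma]$, so $\Gamma$ is regular weakly branch over $[\Gamma,\Gamma]$. I do not anticipate any real obstacle here; the content of the corollary is simply that the profinite argument of Proposition~\ref{Prop: G is branch over [G,G]} localizes to the dense subgroup once the self-replicating property is known at the countable level.
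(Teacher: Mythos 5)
Your proposal is correct and matches the paper's own proof, which explicitly says the corollary follows by repeating the argument of Proposition~\ref{Prop: G is branch over [G,G]} without taking closures, using the countable versions of Lemma~\ref{lem: model-self-replicativity} and Part~\ref{item: model-iii} of Lemma~\ref{lemma-model-groups}. Your extra remark that a general element of $[\Gamma,\Gamma]$ is a product of commutators (so the first-coordinate embedding extends from single commutators to the whole subgroup) is a correct filling-in of a step the paper leaves implicit, not a departure from its route.
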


\begin{proof}
    The proof is the same as in Proposition~\ref{Prop: G is branch over [G,G]}, without taking the closures. Indeed, $[a,b] = (1\, 2\, 3) \in [\Gamma,\Gamma]$ by  Part~\ref{item: model-iii} of Lemma~\ref{lemma-model-groups}, and Lemma~\ref{lem: model-self-replicativity} holds for $\Gamma$ as well. Hence, the corollary follows by the same argument.
\end{proof}

The next result holds only for (profinite) \ref{main-assumption}-restricted model groups, and will be used to show that these groups are regular branch, strengthening Proposition~\ref{Prop: G is branch over [G,G]}.

\begin{prop}\label{Prop: We can multiply the generators to have order 2}
     Let $G=\llangle a,b,c_1,\ldots,c_r\rrangle$ be a \emph{\ref{main-assumption}}-restricted model group as in Definition~\ref{defn-group}, and let $\mathfrak C:=\overline{[G,G]}$. Then, for each generator $\ell\in \{a,b,c_1,\dots,c_r\}$ of $G$, there exist $p,q\in \mathfrak C$ such that the product $p\ell q$ has order 2. 
\end{prop}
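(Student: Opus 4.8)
The plan is to first reduce the statement to a cleaner one about cosets. Since $\mathfrak C=\overline{[G,G]}$ is normal in $G$, for a fixed generator $\ell$ the set $\{p\ell q : p,q\in\mathfrak C\}$ equals the single coset $\ell\mathfrak C$ (write $p\ell q=\ell(\ell^{-1}p\ell)q$ and use normality). Hence the proposition is \emph{equivalent} to the assertion that each coset $\ell\mathfrak C$, $\ell\in\{a,b,c_1,\dots,c_r\}$, contains an element of order $2$; once such an involution $w$ is found, one takes $p=\one$ and $q=\ell^{-1}w\in\mathfrak C$. I would prove all the cosets contain involutions simultaneously, since the construction for one generator feeds on the others.

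The key technical ingredient I would isolate is the following \emph{product-of-sections lemma}: for $d_1,d_2,d_3\in G$ one has $(d_1,d_2,d_3)\in\mathfrak C$ whenever $d_1d_2d_3\in\mathfrak C$ (and the same at each finite level, with $\mathfrak C_n=[G_n,G_n]$). This follows from the factorization
\[(d_1,d_2,d_3)=(d_1,d_1^{-1},\one)\,(\one,d_1d_2,(d_1d_2)^{-1})\,(\one,\one,d_1d_2d_3),\]
whose first two factors lie in $\mathfrak C$ by Corollary~\ref{cor: (g,ginv,1)} (the elements $(g,g^{-1},\one)$ and their $\Sym_3$-conjugates), and whose last factor lies in $\mathfrak C$ by the branch inclusion $\mathfrak C\times\mathfrak C\times\mathfrak C\subset\mathfrak C$ from Proposition~\ref{Prop: G is branch over [G,G]} precisely when $d_1d_2d_3\in\mathfrak C$.

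Next I would construct the involutions explicitly by a \emph{single simultaneous wreath recursion}, which by Lemma~\ref{EXDOC: lem: G is self-similar}\ref{item: rec-syst-i} uniquely defines elements $w^{(a)},w^{(b)},w^{(c_1)},\dots,w^{(c_r)}\in\AT$:
\[w^{(a)}=(\one,\,w^{(x)},\,\one)(1\,3),\quad w^{(b)}=(\one,\,\one,\,w^{(y)})(1\,2),\quad w^{(c_i)}=\bigl(w^{(c_{i,1})},\,w^{(c_{i,2})},\,w^{(c_{i,3})}\bigr),\]
with the convention $w^{(\one)}:=\one$ and where $x,y,c_{i,j}\in\{\one,a,b,c_1,\dots,c_r\}$ are the prescribed sections from Definition~\ref{defn-group}. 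I would then prove, by induction on the tree level $n$, the two statements $(w^{(\ell)})^2|_{T_n}=\one$ and $\ell^{-1}w^{(\ell)}|_{T_n}\in\mathfrak C_n$ for every generator $\ell$. The involution property is immediate from the recursion (e.g.\ $w^{(a)}$ squares to $(\one,(w^{(x)})^2,\one)$). For the membership one computes, for instance, $a^{-1}w^{(a)}=(w^{(x)},x^{-1},\one)(1\,2\,3)$ and $b^{-1}w^{(b)}=(\one,y^{-1}w^{(y)},\one)(1\,2\,3)$; since $(1\,2\,3)=[a,b]\in\mathfrak C$ by Lemma~\ref{lemma-model-groups}\ref{item: model-iii}, it remains to place the parenthesized triple in $\mathfrak C$, which the product-of-sections lemma reduces to $w^{(x)}x^{-1}\in\mathfrak C_{n-1}$ (resp.\ $y^{-1}w^{(y)}\in\mathfrak C_{n-1}$), and this is exactly the inductive hypothesis $w^{(x)}\in x\mathfrak C_{n-1}$ (resp.\ $w^{(y)}\in y\mathfrak C_{n-1}$). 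The $c_i$-case is even more direct: each coordinate of $c_i^{-1}w^{(c_i)}$ lies in $\mathfrak C_{n-1}$ by hypothesis, so the triple lies in $\mathfrak C_{n-1}\times\mathfrak C_{n-1}\times\mathfrak C_{n-1}\subset\mathfrak C_n$. Passing to all $n$ and using $\mathfrak C=\varprojlim\mathfrak C_n$ (cf.\ Lemma~\ref{lem-same-closures}) then yields $w^{(\ell)}\in\ell\mathfrak C$ with $(w^{(\ell)})^2=\one$.

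The main obstacle I anticipate is twofold, and both parts hinge on the hypotheses \ref{cond:Y1}--\ref{cond:Y4}. First, the ``appears-as-a-section'' relation among generators (forced to be a bijection onto the multiset of sections by Condition~\ref{cond:Y2}) contains cycles, so there is no well-founded order on the generators allowing a naive induction; the point of recursing through the wreath recursion is that each reference to $w^{(\text{section})}$ drops the tree level by one, converting the cyclic dependency into a genuine induction on $n$. Second, I must rule out $w^{(\ell)}=\one$ to guarantee order \emph{exactly} $2$: this is immediate for $w^{(a)},w^{(b)}$ from their nontrivial first-level actions $(1\,3),(1\,2)$, and for $w^{(c_i)}$ it is where Condition~\ref{cond:Y4} is essential — tracing the section path $v$ with $c_i|_v\in\{a,b\}$ through the (first-level-preserving) recursion exhibits a section of $w^{(c_i)}$ equal to $w^{(a)}$ or $w^{(b)}$, hence nontrivial. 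This gives $p=\one$ and $q=\ell^{-1}w^{(\ell)}\in\mathfrak C$ with $p\ell q=w^{(\ell)}$ of order $2$, completing the proof.
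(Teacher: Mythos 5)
Your proof is correct, but it takes a genuinely different route from the paper's. The paper proves, by induction on the level $n$, that for each generator $\ell$ there exist flanking elements $p_n,q_n\in[\Alt_3]^n$ with $(p_n\,\ell|_{T_n}\,q_n)^2=\one$, and then extracts limits $p,q\in[\Alt_3]^\infty\subset\mathfrak C$ by compactness (the inclusion $[\Alt_3]^\infty\subset\mathfrak C$ coming from Proposition~\ref{Prop: G is branch over [G,G]}); nontriviality of $p\ell q$ is then immediate, since Condition~\ref{cond:Y4} forces $\ell$ to act as a $2$-cycle on some $\{v1,v2,v3\}$, so $\ell\notin[\Alt_3]^\infty$ while $p,q\in[\Alt_3]^\infty$. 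You instead reduce the claim, via normality of $\mathfrak C$, to each coset $\ell\mathfrak C$ containing an involution, define all candidate involutions $w^{(\ell)}$ simultaneously by a wreath recursion (well-posed by Lemma~\ref{EXDOC: lem: G is self-similar}), and verify $(w^{(\ell)})^2=\one$ and $w^{(\ell)}\in\ell\mathfrak C$ level by level, with coset membership handled by your product-of-sections factorization; its ingredients, Corollary~\ref{cor: (g,ginv,1)} and the inclusion $\mathfrak C\times\mathfrak C\times\mathfrak C\subset\mathfrak C$, do restrict correctly to finite levels because $\mathfrak C_n=[G_n,G_n]$ (surjectivity of $G\to G_n$ plus compactness), and the inductive hypothesis at level $n-1$ also supplies the tacitly needed fact $w^{(x)}|_{T_{n-1}}\in G_{n-1}$ before Corollary~\ref{cor: (g,ginv,1)} is invoked. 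Your computations $a^{-1}w^{(a)}=(w^{(x)},x^{-1},\one)(1\,2\,3)$ and $b^{-1}w^{(b)}=(\one,y^{-1}w^{(y)},\one)(1\,2\,3)$ are correct in the paper's right-action convention. What each approach buys: yours avoids the compactness/subsequence extraction, produces an explicit involution in each coset, and yields the statement in exactly the form $\ell^2\mathfrak C=\mathfrak C$ consumed by Theorem~\ref{Thm: M_r is always regular branch}; the paper's gives the finer conclusion $p,q\in[\Alt_3]^\infty$ (not used downstream) and a one-line nontriviality argument, whereas your nontriviality step needs slightly more care than the parenthetical ``first-level-preserving'' suggests---the recursion places $w^{(x)}$ at vertex $2$ although $a|_1=x$, so the path $v$ from Condition~\ref{cond:Y4} must be repositioned along the way to some $v'$ of the same length with $w^{(c_i)}|_{v'}\in\{w^{(a)},w^{(b)}\}$; that adjusted induction does go through, so the argument stands.
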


\begin{proof}
    As in the proof of Proposition~\ref{Prop: G is branch over [G,G]}, we denote by $[\Alt_3]^n$ the $n$-fold wreath product $\Alt_3\wr \Alt_3\wr \cdots \wr \Alt_3$ for each $n\geq 0$, with $[\Alt_3]^0:=\{\one\}$. Recall also that $[\Alt_3]^n\subset [\Alt_3]^{n+1}=[\Alt_3]^n\wr \Alt_3\subset [\Alt_3]^\infty\subset \mathfrak{C}$ for each $n\geq 0$. 
    
    First, we use induction to prove the following assertion for all $n\geq 0$:

    \begin{enumerate}[label=\normalfont{$(*_n)$}]
    \item\label{induction: order-2-products}
        For each $\ell_n\in \{a|_{T_n},b|_{T_n},c_1|_{T_n},\dots,c_r|_{T_n}\}$, there exist $p_{\ell_n},q_{\ell_n}\in [\Alt_3]^n\subset \mathfrak{C}$ such that $(p_{\ell_n}\,\ell_n\, q_{\ell_n})^2=\one$. 
    \end{enumerate}

    As this statement is clearly true when $n=0$, we suppose it is proven up to some $n\geq 0$. Since the generators $a,b,c_1,\dots,c_r$ satisfy Condition~\ref{cond:Y1}, for each $\ell_{n}\in \{x|_{T_{n}},y|_{T_{n}},c_{i,j}|_{T_{n}}\}$ we can choose $p_{\ell_{n}},q_{\ell_{n}}\in [\Alt_3]^{n}$ so that $(p_{\ell_{n}}\, \ell_{n} \, q_{\ell_{n}})^2=\one$, based on the induction hypothesis. (Here, if $\ell_n$ is trivial, we simply set $p_{\ell_n}=q_{\ell_n}=\one$.) 
    
    Now consider an arbitrary $\ell_{n+1}\in \{a|_{T_{n+1}},b|_{T_{n+1}},c_1|_{T_{n+1}},\dots,c_r|_{T_{n+1}}\}$. For $\ell_{n+1}=a|_{T_{n+1}}$, we set $p_{a|_{T_{n+1}}}:=(p_{x|_{T_{n}}},\one,\one)\in [\Alt_3]^{n+1}$ and $q_{a|_{T_{n+1}}}:=(\one,q_{x|_{T_{n}}},\one)(1\, 3\, 2) \in [\Alt_3]^{n+1}$, so that: 
    $$p_{a|_{T_{n+1}}}(a|_{T_{n+1}})\,q_{a|_{T_{n+1}}}=(p_{x|_{T_{n}}},\one,\one)\,(x|_{T_{n}},\one,\one)(1\, 2)\,(\one,q_{x|_{T_{n}}},\one)(1\,3\,2)$$ $$=(p_{x|_{T_{n}}}(x|_{T_{n}})q_{x|_{T_{n}}},\one,\one)(2\,3).$$ 
    By the definition of $p_{x|_{T_n}}$ and $q_{x|_{T_n}}$, the element above has order $2$. Mimicking this argument for $\ell_{n+1}=b|_{T_{n+1}}$, we can also find corresponding $p_{b|_{T_{n+1}}}, q_{b|_{T_{n+1}}}\in [\Alt_3]^{n+1}$. For $\ell_{n+1}=c_i|_{T_{n+1}}$, we set 
    $$p_{c_i|_{T_{n+1}}}:=(p_{c_{i,1}|_{T_{n}}},p_{c_{i,2}|_{T_{n}}},p_{c_{i,3}|_{T_{n}}})\in [\Alt_3]^{n+1}$$ and $$q_{c_i|_{T_{n+1}}}:=(q_{c_{i,1}|_{T_{n}}},q_{c_{i,2}|_{T_{n}}},q_{c_{i,3}|_{T_{n}}})\in [\Alt_3]^{n+1}.$$ Then, it can be readily seen that $(p_{c_i|_{T_{n+1}}}(c_i|_{T_{n+1}})\,q_{c_i|_{T_{n+1}}})^2=\one$, finishing the proof of the induction step.
     
    Fix now an arbitrary $\ell\in \{a,b,c_1,\dots,c_r\}$, and consider the sequences $\{p_n:=p_{\ell|_{T_{n}}}\}$ and $\{q_n:=q_{\ell|_{T_{n}}}\}$ with $(p_n\,\ell|_{T_n}\,q_n)^2=\one$ provided by \ref{induction: order-2-products}.  Since the group $[\Alt_3]^\infty$ is closed, it is compact, and thus the sequences $\{p_{n}\}, \{q_{n}\} \subset [\Alt_3]^\infty$ have convergent subsequences with the same index sets and limits $p, q\in [\Alt_3]^\infty\subset \mathfrak C$, respectively. Then the convergence ensures that $(p \ell q)^2 = \one$. Finally, note that Condition~\ref{cond:Y4} implies that $\ell$ acts as a $2$-cycle on the set $\{v1, v2, v3\}$ for some $v\in \sX^*$. It then follows that $\ell\notin [\Alt_3]^\infty$, and thus $p \ell q \neq \one$. We conclude that $p\ell q$ has order $2$, which finishes the proof.
    \end{proof}

Finally, we prove that \ref{main-assumption}-restricted model groups are regular branch. 

\begin{thm}\label{Thm: M_r is always regular branch}
Let $G=\llangle a,b,c_1,\ldots,c_r\rrangle$ be a \emph{\ref{main-assumption}}-restricted model group as in Definition~\ref{defn-group}. Then the closure $\mathfrak C := \overline{[G,G]}$ of its commutator subgroup has finite index in $G$, and, consequently, $G$ is regular branch over $\mathfrak C$. Moreover, the index of $\mathfrak C$ in $G$ is at most $2^{r+2}$, where $r+2$ is the number of topological generators of $G$. 
\end{thm}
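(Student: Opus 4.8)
The plan is to reduce everything to a computation in the abelianization of $G$. By definition, $\mathfrak C = \overline{[G,G]}$ is the closure of the commutator subgroup, and since $G$ is a profinite group, $\mathfrak C$ is a closed normal subgroup with abelian Hausdorff quotient $G/\mathfrak C$. As $G$ is topologically generated by the $r+2$ elements $a,b,c_1,\dots,c_r$, the quotient $G/\mathfrak C$ is an abelian profinite group topologically generated by the images of these same $r+2$ generators. My goal is to show this quotient is in fact finite of order at most $2^{r+2}$, from which finiteness of the index $[G:\mathfrak C]$ is immediate.

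The key input is Proposition~\ref{Prop: We can multiply the generators to have order 2}. First I would fix a generator $\ell \in \{a,b,c_1,\dots,c_r\}$ and invoke that proposition to obtain $p,q \in \mathfrak C$ with $(p\ell q)^2 = \one$. Writing $\pi\colon G \to G/\mathfrak C$ for the quotient map, we have $\pi(p) = \pi(q) = \one$ since $p,q\in \mathfrak C = \Ker(\pi)$, and therefore $\pi(\ell) = \pi(p\ell q)$. Because $(p\ell q)^2 = \one$, the element $\pi(\ell) = \pi(p\ell q)$ satisfies $\pi(\ell)^2 = \one$, i.e.\ the image of every model generator in $G/\mathfrak C$ has order dividing $2$.

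Next I would assemble these facts: $G/\mathfrak C$ is abelian and topologically generated by the $r+2$ elements $\pi(a),\pi(b),\pi(c_1),\dots,\pi(c_r)$, each of order dividing $2$. Hence there is a continuous surjection from $(\Z/2\Z)^{r+2}$ onto $G/\mathfrak C$, sending the standard basis vectors to these generators. Since $(\Z/2\Z)^{r+2}$ is already finite, so is $G/\mathfrak C$, and its order is at most $2^{r+2}$. This shows $\mathfrak C$ has index at most $2^{r+2}$ in $G$. Finally, Proposition~\ref{Prop: G is branch over [G,G]} already gives that $G$ is regular weakly branch over $\mathfrak C$, i.e.\ $\mathfrak C\times\mathfrak C\times\mathfrak C \subset \mathfrak C$; combining this containment with the finite index just established yields, by Definition~\ref{weakly-branch}, that $G$ is regular branch over $\mathfrak C$, completing the proof.

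The argument is mostly bookkeeping once Proposition~\ref{Prop: We can multiply the generators to have order 2} is in hand; the one point that deserves care is the topological step of passing from ``topologically generated by finitely many elements of order dividing $2$'' to ``finite.'' The subtlety is benign here precisely because the candidate generators have bounded order: any abelian profinite group that is topologically generated by finitely many elements each killed by $2$ is a continuous quotient of the finite group $(\Z/2\Z)^{r+2}$ and is therefore finite, so no genuine profinite analysis is needed. I expect the only real obstacle to have already been dispatched in Proposition~\ref{Prop: We can multiply the generators to have order 2}, whose inductive construction of the order-$2$ representatives $p\ell q$ (using Condition~\ref{cond:Y4} to guarantee $p\ell q\neq\one$) is what makes the bound $2^{r+2}$ rather than merely finiteness come out.
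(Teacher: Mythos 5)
Your proof is correct and follows essentially the same route as the paper's: both rest on Proposition~\ref{Prop: We can multiply the generators to have order 2} to make each model generator an involution modulo $\mathfrak C$, and then combine the resulting bound $[G:\mathfrak C]\leq 2^{r+2}$ with the weak branching from Proposition~\ref{Prop: G is branch over [G,G]}. The only divergence is in the finiteness bookkeeping: the paper verifies by hand that every $g\in G$ lies in one of the cosets $a^{k_1}b^{k_2}c_1^{k_3}\cdots c_r^{k_{r+2}}\mathfrak C$ with $k_i\in\{0,1\}$ (via density of $\Gamma$ and eventual constancy of coset sequences in the Hausdorff quotient $G/\mathfrak C$), whereas you package the same step as the observation that an abelian Hausdorff group topologically generated by finitely many elements of order dividing $2$ is a continuous quotient of $(\Z/2\Z)^{r+2}$ --- equally valid, since the finite image of $(\Z/2\Z)^{r+2}$ is closed and dense, hence all of $G/\mathfrak C$.
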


\begin{proof}
   By Proposition~\ref{Prop: We can multiply the generators to have order 2},
   for each $\ell\in \{a,b,c_1,\dots,c_r\}$, we can find $p,q\in \mathfrak{C}$ such that the product $p\ell q$ has order $2$. Since $\mathfrak C = \overline{[G,G]}$, it then follows that the coset $\ell^2\mathfrak{C}$ equals $\mathfrak C$. Now we claim that the quotient group $G/\mathfrak C$ is generated by the cosets $a\mathfrak C,b\mathfrak C,c_1\mathfrak C,\dots,c_r\mathfrak C$. Indeed, if $g\in \Gamma:=\langle a,b,c_1,\dots,c_r\rangle \subset G$, then we can write $g\mathfrak C=a^{k_1}b^{k_2}c_1^{k_3}\cdots c_r^{k_{r+2}}\mathfrak C$ for some $k_i\in \{0,1\}$. Therefore, the elements of the countable subgroup $\Gamma$ are contained in finitely many cosets of $\mathfrak C$. Since $\Gamma$ is dense in $G$, any $g\in G$ is the limit of a sequence $\{g_n\}$ in~$\Gamma$. As $\mathfrak C\subset G$ is closed, we know that $G/\mathfrak C$ is a Hausdorff space, so in particular limits are well-defined. By the continuity of the quotient map, the coset $g\mathfrak C$ must be the limit of $\{g_n\mathfrak C\}$. But then, as $g_n\mathfrak C$ has only finitely many options for each $n$, the sequence $\{g_n\mathfrak C\}$ must eventually become constant. It then follows that $g\mathfrak C$ equals one of the cosets $a^{k_1}b^{k_2}c_1^{k_3}\cdots c_r^{k_{r+2}}\mathfrak C$ with $k_i\in\{0,1\}$. Hence, $\mathfrak C$ has finite index in $G$, bounded above by $2^{r+2}$. Proposition~\ref{Prop: G is branch over [G,G]} then implies that $G$ is regular branch over $\mathfrak{C}$, which completes the proof of the theorem.
\end{proof}

\subsection{Torsion}

In this subsection, we study torsion in \ref{main-assumption}-restricted model groups. First, let us recall that the automorphism group $\AT=\Aut(T)$ of the ternary rooted tree $T=\sT_\sX$, where $\sX=\{1,2,3\}$, is identified with the infinite iterated wreath product $[\Sym_3]^\infty$. Consequently, if $g\in \AT$ is a torsion element, its order must divide some power of $6$, and hence it will be equal to $2^m3^n$ for some $m,n\geq 0$. The next result shows that all such orders are realizable in every \ref{main-assumption}-restricted model group.

\begin{thm}\label{thm-torsion}
 Let $G=\llangle a,b,c_1,\ldots,c_r\rrangle$ be a \emph{\ref{main-assumption}}-restricted model group as in Definition~\ref{defn-group}, let $\Gamma=\langle a,b,c_1,\ldots,c_r\rangle$ be the corresponding countable dense subgroup, and let $\mathfrak C:=\overline{[G,G]}$. Then the following holds:
    \begin{enumerate}[label=(\roman*),font=\normalfont]
        \item \label{item: torsion-i} For each $n \geq 0$, the commutator subgroup $[\Gamma,\Gamma] \subset \Gamma$ contains an element of order~$3^n$. 
        \item  \label{item: torsion-ii} For each $m,n \geq 0$, the group $\mathfrak{C}\subset G$ contains an element of order $2^m3^n$, and thus all torsion orders realizable in $\Aut(T)$ are represented in the profinite group $G$.
    \end{enumerate}
\end{thm}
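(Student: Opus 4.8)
The plan is to prove both parts by explicit recursive constructions of torsion elements, exploiting the branch structure from Propositions~\ref{Prop: G is branch over [G,G]} and~\ref{Prop: We can multiply the generators to have order 2} together with Corollary~\ref{cor: (g,ginv,1)}. For Part~\ref{item: torsion-i}, I would build, for each $n$, an element $t_n\in[\Gamma,\Gamma]$ of order $3^n$ by setting $t_0=\one$ and $t_n=(t_{n-1},\one,\one)(1\,2\,3)$. A direct wreath computation gives $t_n^3=(t_{n-1},t_{n-1},t_{n-1})$, so $\operatorname{ord}(t_n)=3\operatorname{ord}(t_{n-1})=3^n$ by induction. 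To see $t_n\in[\Gamma,\Gamma]$, first recall $(1\,2\,3)=[a,b]\in[\Gamma,\Gamma]$ by Lemma~\ref{lemma-model-groups}\ref{item: model-iii}; second, the computation $[(g,p,\one),(h,\one,q)]=([g,h],\one,\one)$ from the proof of Proposition~\ref{Prop: G is branch over [G,G]} — valid in $\Gamma$, since the required elements $(g,p,\one),(h,\one,q)$ lie in $\Gamma$ by Lemma~\ref{lem: model-self-replicativity} — shows $[\Gamma,\Gamma]\times\{\one\}\times\{\one\}\subset[\Gamma,\Gamma]$. Hence if $t_{n-1}\in[\Gamma,\Gamma]$ then $(t_{n-1},\one,\one)\in[\Gamma,\Gamma]$ and $t_n=(t_{n-1},\one,\one)\cdot(1\,2\,3)\in[\Gamma,\Gamma]$.

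For Part~\ref{item: torsion-ii} I would realize every order $2^m3^n$ inside $\mathfrak C$ by two separate multiplicative steps. The easy factor is $3$: if $\mathfrak C$ contains $h$ of order $M$, then $g=(h,\one,\one)(1\,2\,3)$ again satisfies $g^3=(h,h,h)$, so $\operatorname{ord}(g)=3M$, and $g\in\mathfrak C$ directly because $(h,\one,\one)\in\mathfrak C$ by the weak branch property $\mathfrak C\times\mathfrak C\times\mathfrak C\subset\mathfrak C$ (Proposition~\ref{Prop: G is branch over [G,G]}) while $(1\,2\,3)\in\mathfrak C$. Starting from $\one$ this already produces elements of order $3^n$ in $\mathfrak C$, settling $m=0$. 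The harder factor is $2$, and this is where the main obstacle lies: every element of $\mathfrak C$ has first-level action in $\Alt_3$ (the homomorphism $\operatorname{sgn}$ composed with the first-level action kills commutators, and passes to the closure), so $\mathfrak C$ contains no transpositions, yet doubling the $2$-adic order of a torsion element naturally calls for a transposition swapping two subtrees.

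I would resolve this with a ``fold'' argument. Fix, via Proposition~\ref{Prop: We can multiply the generators to have order 2}, an order-$2$ element $\tau\in G$ whose first-level action is a transposition $\sigma=(i\,j)$; writing $\tau=(\tau_1,\tau_2,\tau_3)\sigma$, the relation $\tau^2=\one$ forces $\tau_j=\tau_i^{-1}$ and $\tau_k^2=\one$ for the fixed letter $k$. Given $h\in\mathfrak C$ of order $M$, the weak branch property puts the element $e$ with section $h$ at position $i$ and $\one$ elsewhere into $\mathfrak C\subset G$, and a short computation shows that $(e\tau)^2$ has sections $h$ and $\tau_i^{-1}h\tau_i$ (and $\one$), hence order $M$, so $g:=e\tau\in G$ has order $2M$. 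Although $g\notin\mathfrak C$, Corollary~\ref{cor: (g,ginv,1)} yields $(g,g^{-1},\one)\in[G,G]\subset\mathfrak C$, and $(g,g^{-1},\one)^k=(g^k,g^{-k},\one)$ shows this element has order $\operatorname{ord}(g)=2M$. Thus $\mathfrak C$ also admits a ``multiply-by-$2$'' step.

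Combining the two steps, starting from $\one$ and applying the $3$-step $n$ times and the $2$-step $m$ times produces an element of $\mathfrak C$ of order $2^m3^n$. Finally, since every torsion element of $\Aut(T)=[\Sym_3]^\infty$ has order of the form $2^m3^n$, all realizable torsion orders occur already in $\mathfrak C$, and a fortiori in $G$. The only genuinely delicate point is the order bookkeeping in the fold step — ensuring that the auxiliary sections $\tau_i,\tau_k$ do not inflate the $2$-adic order — which taking $e$ supported in the single coordinate $i$ handles cleanly.
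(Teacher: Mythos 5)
Your proposal is correct and takes essentially the same route as the paper: both parts rest on the same ingredients, namely the weak branch property over $\mathfrak C$ from Proposition~\ref{Prop: G is branch over [G,G]} (and its countable analogue for $[\Gamma,\Gamma]$), the order-$2$ elements $p\ell q$ supplied by Proposition~\ref{Prop: We can multiply the generators to have order 2}, and the fold $(g,g^{-1},\one)\in[G,G]\subset\mathfrak C$ from Corollary~\ref{cor: (g,ginv,1)}, with your $t_n$ being exactly the paper's order-$3^n$ element. The only differences are cosmetic: the paper normalizes the order-$2$ element to $a'=(\one,\one,x')(1\,2)$ with $(x')^2=\one$, which makes your conjugation bookkeeping in the fold step trivial, and it combines the $2$- and $3$-parts in a single product $(g,g^{-1},h)$ of order $2^m3^n$ rather than iterating a doubling step.
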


\begin{proof}
In the proof of Proposition~\ref{Prop: G is branch over [G,G]}, we showed that for each $n \geq 0$, the group $[\Gamma,\Gamma]$ contains the $n$-fold wreath product $[\Alt_3]^n = \Alt_3 \wr \cdots \wr \Alt_3$. In particular, for each fixed $n\geq 0$, $[\Gamma,\Gamma]$ contains the element $h$ such that $h|_{T_n} = g|_{T_n}$, where $g=(g,\one,\one)(1\,2\,3)$, and such that for all $v \in \sX^{n}$ the sections $h|_v$ are trivial. Since $g$ acts transitively on $T_n$, we conclude that $h$ has order~$3^n$, which establishes Part~\ref{item: torsion-i} of the theorem. Also, since $[\Alt_3]^n \subset \mathfrak{C}=\overline{[G,G]}$, it follows that $\mathfrak{C}\subset G$ contains elements of order $3^n$ for all $n \geq 0$.

To prove Part~\ref{item: torsion-ii}, we first show the following claim.

\smallskip

\emph{Claim}. There is an element $a'=(\one, \one, x') (1\, 2)\in G$ with $(x')^2=\one$. 

Indeed, we can multiply the generator $a=(x,\one,\one)(1\,2)$ by $(1\, 2\, 3)\in \mathfrak{C}$ on both sides to get the element $(\one,\one,x)(1\,2)\in G$. Since $x\in \{\one, a,b,c_1,\dots,c_r\}$ by Condition~\ref{cond:Y1}, Proposition~\ref{Prop: We can multiply the generators to have order 2} implies that there exists $p,q\in \mathfrak{C}$ such that $(pxq)^2=\one$. (Here, if $x$ is trivial, we simply set $p=q=\one$.) Since $(\one,\one,p), (\one,\one,q)\in \mathfrak{C}\subset G$ by Proposition~\ref{Prop: G is branch over [G,G]}, we conclude that $$a':=(\one ,
\one,p)(\one,\one,x)(1\,2)(
\one,\one, q)=(\one,\one, pxq) (1\, 2) \in G,$$
which shows the desired statement with $x':=pxq$.

\medskip

We now prove that the group $\mathfrak{C}$ contains an element of order $2^m$ for each $m\geq 0$ by induction on $m$. The claim trivially holds for $m=0$, so we suppose that we can choose $g\in \mathfrak{C}$ of order $2^m$ for some $m\geq 0$. Then $g':=(g, \one, \one )\in \mathfrak{C}\subset G$ by Proposition~\ref{Prop: G is branch over [G,G]}. It can be readily verified, using the induction hypothesis, that the product $g'a'=(g,\one, x')(1\, 2)\in G$
has order $2^{m+1}$, where $a'=(\one,\one, x')(1\, 2)$ is the element from the claim above. By Corollary~\ref{cor: (g,ginv,1)}, it then follows that $\big(g'a', (g'a')^{-1},\one\big)$ is an element of order $2^{m+1}$ in $\mathfrak{C}$, which completes the induction step.

It remains to construct an element of order $2^m3^n$ for some given $m,n\geq 0$. Let $g,h\in \mathfrak{C}$ be some elements of orders $2^{m}$ and $3^n$, respectively, which have been shown to exist. Then $(g,g^{-1},\one)\in \mathfrak{C}$ by Corollary~\ref{cor: (g,ginv,1)}, and $(\one,\one, h) \in  \mathfrak{C}$ by Proposition~\ref{Prop: G is branch over [G,G]}. It now follows that their product $(g,g^{-1},h)\in \mathfrak{C}\subset G$ has order $2^m3^n$, as desired. This completes the proof of the theorem.
\end{proof}

\subsection{Proof of Theorem~\ref{thm-reg-branch}} Theorem~\ref{thm-reg-branch} combines the statements of Theorems~\ref{Thm: M_r is always regular branch} and~\ref{thm-torsion}, along with Corollary~\ref{cor-weakly-branch-countable}.

\subsection{Proof of Corollary~\ref{polynomial-branch}} 
Let $f$ be a PCF cubic polynomial with coefficients in a number field $K$, such that $f$ satisfies Assumption~\ref{main-assumption}, and let $G^{geom}(f)$ be the associated profinite geometric iterated monodromy group. By the proof of Theorem~\ref{main-theorem},\ref{item-MT-1}, there exists a \ref{main-assumption}-restricted model group $G:=G^{model}(f)$ and some $w \in \AT$ such that $G^{geom}(f) = w G w^{-1}$. Set $\mathfrak{C}:=\overline{[G,G]}$ as before. It then follows from Theorem~\ref{Thm: M_r is always regular branch} that $\mathfrak C^{geom} := w \mathfrak{C} w^{-1}$ is a finite-index subgroup of $G^{geom}(f)$, and that $G^{geom}(f)$ is regular branch over it, which proves Part~\ref{item-poly-branch} of the corollary. Since torsion orders are invariant under conjugation, Part~\ref{item-poly-torsion} follows directly from Theorem~\ref{thm-torsion}.

\section{Group filtrations for cubic polynomials with three disjoint critical orbits}\label{section-filtration}

In this section, we prove Theorem~\ref{thm-filtration}.

Let $K$ be a number field, and let $f\in K(z)$ be a cubic PCF polynomial with two distinct finite critical points $c_1,c_2$. Recall that, by the Riemann-Hurwitz formula, both of these points have ramification index $2$. Furthermore, 
their obits are either periodic or strictly pre-periodic. 
Assume now that the orbits of $c_1$ and $c_2$ are disjoint. Then, in order to make sure that the ramification portrait of $f$ satisfies Assumption~\ref{main-assumption}, we only need to assume one more condition: for each $c_i$, $i=1,2$, we have that $c_i$ is a periodic point of $f$, or that both $c_i$ and $f(c_i)$ are not periodic; see Figure~\ref{fig:disjoint orbits}. In other words, if $f(c_i)$ is periodic, then $c_i$ is periodic as well. This ensures that no vertex other than $\infty$ in the ramification portrait has three incoming edges. 

Given that $f$ has the form above, we can immediately identify the ramification portrait of~$f$ with two sets of pairs of numbers $(s_i, m_i)$, $i=1,2$, where $s_i\geq 0$ denotes the length of the pre-periodic part of the orbit of $f(c_i)$, and $m_i\geq 1$ denotes the length of the respective periodic cycle; see Figure~\ref{fig:disjoint orbits} for reference.

By the proof of Theorem~\ref{main-theorem}, $G^{geom}(f)$ is conjugate to a \ref{main-assumption}-restricted model group $G$, with the model generators given by Proposition \ref{prop: model-group-from-map}. Here, one of the following two alternatives is realized for the postcritical orbit of each critical point $c_i$, $i = 1,2$:
\begin{itemize}
    \item  If the orbit of $f(c_i)$ is strictly pre-periodic, %with pre-periodic part of length $s_i$ and periodic cycle of length $m_i$, 
then the $s_i+m_i$ model generators corresponding to this orbit are given by the following system of recursive formulas:
\begin{align}\label{Strictly pre-periodic}
\begin{tabular}{@{}lll}
    $a_1$&=&$(1\,2)$ \;or \; $(2\,3)$,  \\
    $a_{s_i+1}$&=&$(a_{s_i},a_{s_i+m_i},\one)$, \\
    $a_j$&=&$(a_{j-1},\one,\one)$, \quad\quad $j\in \{2,\dots, s_i+m_i\}\setminus\{s_i+1\}$.\\
\end{tabular}
\end{align}
\item If the orbit of $f(c_i)$ is periodic, %of period $m_i$, 
then the corresponding $s_i+m_i=m_i$ model generators are given by:
\begin{align}\label{Strictly periodic}
\begin{tabular}{@{}lll}
    $a_1$&=&$(a_{m_i},\one,\one)(1\,2)$\;  or \; $(\one,\one, a_{m_i})(2\,3)$,\\
    $a_j$&=&$(a_{j-1},\one,\one)$,\quad\quad  $j\in \{2,\dots,m_i\}$.\\
\end{tabular}
\end{align}
\end{itemize}
Moreover, for the postcritical orbits of $c_1$ and $c_2$ we choose distinct  options (i.e., $(1\,2)$ or $(2\, 3)$) for the first-level action of the corresponding model generator~$a_1$; compare Remark~\ref{rmk-23}.

\begin{figure}   
\begin{center}
\fbox{
    \begin{tikzpicture}[main/.style = {draw, circle}]
    
     \node (c1) at (-6,0) {$\bullet$};
    \node at (-6,.3) {$c_1$};

    \node (1) at (-5,0) {$\bullet$};
    \node at (-5,.3) {$p_1$};
    
    \node (2) at (-4,0) {$\bullet$};
    \node at (-4,.3) {$p_2$};
    
    \node (3) at (-3,0) {$\cdots$};
    
    \node (4) at (-2,0) {$\bullet$};
    \node at (-2,.3) {$p_{s_1}$};

\node (5) at (-1,0) {${\bullet}$};
\node at (-.4,0) {$p_{s_1+1}$};

\node (6) at (0,1) {${\bullet}$};
\node at (0,.6) {$p_{s_1+2}$};

\node (7) at (1,0) {\phantom{${\vdots}$}};
\node at (.9,0) {$\vdots$}; 

\node (8) at (0,-1) {${\bullet}$};
\node at (0,-.6) {$p_{s_1+m_1}$};

\draw[->] (5) .. controls (-.707,.707) .. (6);
\draw[->] (6) .. controls (.707,.707) .. (7);
\draw[->] (7) .. controls (.707,-.707) .. (8);
\draw[->] (8) .. controls (-.707,-.707) .. (5);

\draw[->] (1)--(2);
\draw[->] (2)--(3);
\draw[->] (3)--(4);
\draw[->] (4)--(5);

\draw[->] (c1) .. controls (-5.5,.2) .. (1);
\draw[->] (c1) .. controls (-5.5,-.2) .. (1);

\begin{scope}[xshift=2cm]
\node (5a) at (3,0) {${\bullet}$};
\node at (2.3,0) {$c_2=q_{1}$};

\node (6a) at (4,1) {${\bullet}$};
\node at (4,1.4) {$q_{2}$};

\node (7a) at (5,0) {\phantom{${\vdots}$}};
\node at (4.9,0) {$\vdots$};

\node (8a) at (4,-1) {${\bullet}$};
\node at (4,-.6) {$q_{m_2}$};

\draw[->] (5a) .. controls (3.293,.707) .. (6a);
\draw[->] (5a) .. controls (3.707,.293) .. (6a);
\draw[->] (6a) .. controls (4.707,.707) .. (7a);
\draw[->] (7a) .. controls (4.707,-.707) .. (8a);
\draw[->] (8a) .. controls (3.293,-.707) .. (5a);
\end{scope}
\end{tikzpicture}

}
\end{center}
\caption{Critical orbits for two typical cases: left, both $c_1$ and $f(c_1)$ are not periodic; right, $c_2$ is periodic.}
\label{fig:disjoint orbits}
\end{figure}
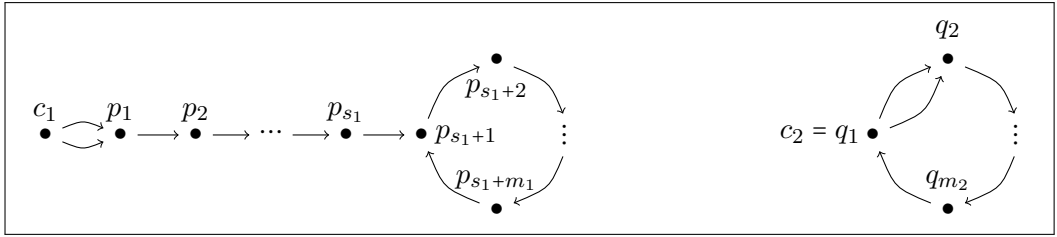

Suppose now that $f,f'\in K(z)$ are two cubic PCF polynomials, each satisfying Assumption~\ref{main-assumption} and having disjoint critical orbits. We will show below that, under certain divisibility conditions on the numbers $s_i,m_i, s'_i, m'_i$, $i=1,2$, characterising the respective ramification portraits, one can conjugate one of the given groups into a subgroup of the other one; see Theorem~\ref{thm-filtration}. In other words, up to conjugation, the profinite geometric iterated monodromy groups associated to PCF polynomials satisfying Assumption~\ref{main-assumption} and having disjoint critical orbits form \emph{filtrations} in $W=\Aut(T)$.

We begin with two technical propositions that will be used to establish the desired inclusion.

\begin{prop}\label{prop: moving generators}
    Let $m\geq 1$ and $s\geq 0$ be integers, and let $G$ be a {\normalfont\ref{main-assumption}}-restricted model group. Consider two sets of $s+m$ elements $a_1,\dots,a_{s+m}\in W$ and $b_1,\dots,b_{s+m}\in W$ given in of one of the following two recursive forms:
    \begin{enumerate}[label=(\roman*),font=\normalfont]
        \item\label{item: moving-1} $a_1=(a_{s+m},\one,\one)(1\,2)$ and $a_j=(a_{j-1},\one,\one)$ for $j\neq 1$; and \newline
        ${\,}b_1=(\one,\one,b_{s+m})(2\,3)$ and $b_j=(b_{j-1},\one,\one)$ for $j\neq 1$;   
    \end{enumerate}
    or
    \begin{enumerate}[label=(\roman*),font=\normalfont] 
    \setcounter{enumi}{1}
    \item\label{item: moving-2} $a_1=(1\,2)$, $a_{s+1}=(a_s,a_{s+m},\one)$, and $a_j=(a_{j-1},\one,\one)$ for $j\neq 1,s+1$; and\newline    
    ${\,}b_1=(2\,3)$, $b_{s+1}=(b_s,b_{s+m},\one)$, and $b_j=(b_{j-1},\one,\one)$ for $j\neq 1,s+1$.
    \end{enumerate}

    Then, for each $j=1,\dots, s+m$,  the group $G$ contains $a_j$ if and only if it contains $b_j$. Hence, $G$ contains the elements $a_1,\dots,a_{s+m}$ if and only if it also contains the elements $b_1,\dots,b_{s+m}$.
\end{prop}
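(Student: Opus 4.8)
The plan is to reduce the statement to a conjugacy assertion inside $G$: for each $j$ I will construct an element $w_j\in G$ with $w_j\,a_j\,w_j^{-1}=b_j$, after which $a_j\in G\iff b_j\in G$ is immediate, since conjugation by an element of $G$ preserves $G$. The conjugators $w_j$ will be specified by a recursive system mirroring the defining recursions of the $a_j$ and $b_j$; such a system has a unique solution in $\AT$ by Lemma~\ref{EXDOC: lem: G is self-similar}\ref{item: rec-syst-i}, and the identities $w_j a_j w_j^{-1}=b_j$ are then verified by a direct wreath-recursion computation using Lemma~\ref{lem: conj-action} (equivalently, Proposition~\ref{prop: conjugacy-in-W}). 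Throughout I use that $(1\,2\,3)=[a,b]\in\mathfrak C\subseteq G$ (Lemma~\ref{lemma-model-groups}\ref{item: model-iii}) and that $[\Alt_3]^\infty\subseteq\mathfrak C=\overline{[G,G]}\subseteq G$ (Proposition~\ref{Prop: G is branch over [G,G]}). Only one direction need be treated, the reverse following from the symmetry exchanging $a\leftrightarrow b$ (and simultaneously $(1\,2)\leftrightarrow(2\,3)$ and the coordinates $1\leftrightarrow 3$).

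In Case~\ref{item: moving-2} (pre-periodic form) this is clean. The base conjugator sends the bare transposition $a_1=(1\,2)$ to $b_1=(2\,3)$ and can be taken to be the bare $3$-cycle $(1\,3\,2)\in\Alt_3$; the junction step at $j=s+1$ sends $(a_s,a_{s+m},\one)$ to $(b_s,b_{s+m},\one)$ by conjugating the two nontrivial coordinates separately, i.e.\ $w_{s+1}=(w_s,w_{s+m},\one)$; and every tower step $a_j=(a_{j-1},\one,\one)$ is matched by $w_j=(w_{j-1},\one,\one)$. Each resulting $w_j$ has all of its sections equal to some $w_{j'}$ or to $\one$ and all of its local first-level actions in $\Alt_3$, so $w_j\in[\Alt_3]^\infty\subseteq\mathfrak C\subseteq G$; this settles Case~\ref{item: moving-2}.

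The essential difficulty is Case~\ref{item: moving-1} (periodic form), where $a_1=(a_{s+m},\one,\one)(1\,2)$ couples the feedback section $a_{s+m}$ to the transposition. A short computation shows that no conjugator carrying $a_1$ to $b_1$ can be supported on $\Alt_3$: its top action must be $(1\,3\,2)$ or $(1\,3)$, and in the former case realizing the target action forces a copy of $a_{s+m}$ into a section of the conjugator, while the latter is already an odd permutation. I will get around this by first propagating membership along the whole chain. Indeed $a_1^2=(a_{s+m},a_{s+m},\one)$, so if $a_1\in G$ then $a_{s+m}=a_1^2|_1\in G$ by self-similarity; combined with the restrictions $a_j|_1=a_{j-1}$ and $a_{s+m}|_{1^{s+m-j}}=a_j$, this shows that $a_j\in G$ for a single $j$ already forces the entire chain, and in particular $a_{s+m}$, into $G$ (symmetrically, $b_1^2=(\one,b_{s+m},b_{s+m})$ does the same for the $b$-chain). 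With $a_{s+m}\in G$ available, the offending section $w_{s+m}a_{s+m}$ now lies in $G$, and the required conjugators can be realized inside $G$ using that $G$ is self-replicating (Lemma~\ref{lem: model-self-replicativity}) and regular branch over $\mathfrak C$, so that $\mathfrak C\times\mathfrak C\times\mathfrak C\subseteq G$ (Theorem~\ref{Thm: M_r is always regular branch}), which lets one prescribe the three first-level sections of the diagonal conjugator once the cosets of these sections in the finite $2$-group $G/\mathfrak C$ are checked to be compatible.

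The hard part is precisely this last step in Case~\ref{item: moving-1}: exhibiting the conjugators inside $G$, which amounts to resolving the bootstrapping phenomenon that $b_1\in G$ and $b_{s+m}\in G$ are logically intertwined (each equivalent to the other via $b_1^2$). It is worth stressing that one cannot avoid invoking the full richness of the ambient group here: the two chains generate genuinely different closed subgroups of $\AT$, their first-level images being $\langle(1\,2)\rangle$ and $\langle(2\,3)\rangle$, so it is only the presence of the $3$-cycle $(1\,2\,3)$ together with the branching and self-replication of $G$ that permits rotating between the two orientations. Once the per-$j$ equivalence is established in both cases, the final clause of the proposition, that $G$ contains all of $a_1,\dots,a_{s+m}$ if and only if it contains all of $b_1,\dots,b_{s+m}$, is immediate.
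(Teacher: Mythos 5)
Your Case~\ref{item: moving-2} argument is complete and correct: the recursion $w_1=(1\,3\,2)$, $w_{s+1}=(w_s,w_{s+m},\one)$, $w_j=(w_{j-1},\one,\one)$ defines conjugators lying in $[\Alt_3]^\infty\subseteq\mathfrak C\subseteq G$, and your diagnosis that no conjugator in $[\Alt_3]^\infty$ can exist in Case~\ref{item: moving-1} is also accurate (the forced relation $w_3=w_2a_{s+m}$ you derive is exactly right), as is the membership-propagation observation via $a_1^2=(a_{s+m},a_{s+m},\one)$ and the section identities. But the proof of Case~\ref{item: moving-1} is not actually there: the step you yourself flag as ``the hard part'' is left as a sketch with an unresolved circularity. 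Concretely, the conjugator you need has the form $w^{(1)}=(\,*\,,\,u\,,\,u\,a_{s+m})(1\,3\,2)$, where $u=w^{(s+m)}$ must itself conjugate $a_{s+m}$ to $b_{s+m}$ --- so a first-level section of the conjugator is again a conjugator from the same recursive family, and its membership in $G$ is precisely what is being proved. Self-replication (Lemma~\ref{lem: model-self-replicativity}) prescribes only one section at a time, e.g.\ $(\one,u,*)$, with no control over the third coordinate, and the branch inclusion $\mathfrak C\times\mathfrak C\times\mathfrak C\subseteq\mathfrak C$ only lets you adjust sections within their $\mathfrak C$-cosets \emph{after} you have produced some element of $G$ with the correct root permutation and the correct section-cosets in $G/\mathfrak C$; you never exhibit such an element, nor set up the simultaneous level-wise induction or compactness argument needed to break the bootstrap. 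As written, Case~\ref{item: moving-1} is an unproved assertion.

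The gap is avoidable, and the paper's own proof shows how: drop conjugation in favour of left multiplication. In Case~\ref{item: moving-1}, setting $g_1=(g_{s+m},\one,\one)(1\,3\,2)$ and $g_j=(g_{j-1},\one,\one)$ for $j\neq 1$, a one-line wreath computation gives $g_1b_1=(g_{s+m}b_{s+m},\one,\one)(1\,2)$ and $g_jb_j=(g_{j-1}b_{j-1},\one,\one)$; running this level-by-level (elements $g_{j,n}\in[\Alt_3]^n$ with $g_{j,n}\cdot b_j|_{T_n}=a_j|_{T_n}$) and passing to convergent subsequences in the compact group $[\Alt_3]^\infty$ yields $g_j\in[\Alt_3]^\infty\subseteq\mathfrak C\subseteq G$ with $g_jb_j=a_j$, whence $a_j\in G\iff b_j\in G$ for each $j$ separately, with no propagation along the chain needed. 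The obstruction you correctly identified --- a copy of $a_{s+m}$ being forced into the transporter --- is an artifact of insisting on conjugation: for a mere translate, the section $g_{s+m}$ multiplies $b_{s+m}$ into $a_{s+m}$ and the multiplier stays inside $[\Alt_3]^\infty$, uniformly in both cases.
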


\begin{proof}
    By Proposition \ref{Prop: G is branch over [G,G]}, we know that $G$ contains $[\Alt_3]^\infty$. We will show that for each $j=1,\dots,s+m$, there exists some $g_j\in [\mathcal A_3]^\infty$ such that $g_jb_j=a_j$, where we assume either of the two cases~\ref{item: moving-1} or \ref{item: moving-2}. First, we prove by induction that for each $n\geq 0$ we have  $g_{1,n},\dots, g_{s+m,n}\in [\Alt_3]^n$ such that $g_{j,n}\cdot b_j|_{T_n}=a_j|_{T_n}$ for each $j$. As this is clear for $n=0$, assume that it is true for some $n\geq 0$, giving us elements $g_{j,n}\in [\Alt_3]^n$ for $j=1,\dots,s+m$.

    Now set $g_{1,n+1}:=(g_{s+m,n},\one,\one)(1\,3\,2)\in [\Alt_3]^{n+1}$ if we are in Case~\ref{item: moving-1}, and $g_{1,n+1}:=(1\,3\,2)$, $g_{s+1,n+1}:=(g_{s,n},g_{s+m,n},\one)\in [\Alt_3]^{n+1}$ if we are Case~\ref{item: moving-2}. For all the remaining $j$'s, we set $g_{j,n+1}:=(g_{j-1,n},\one,\one)$ in both cases. It can then be readily verified that $$g_{j,n+1}\cdot b_j|_{T_{n+1}}=a_j|_{T_{n+1}}$$ for all $j=1,\dots, s+m$ in both cases, completing the induction step.

     Since $[\Alt_3]^n\subset [\Alt_3]^\infty$ for all $n\geq 0$ and the group $[\Alt_3]^\infty$ is compact, for each $j=1,\dots, s+m$, we can find a convergent subsequence of the constructed sequence $\{g_{j,n}\}$. Setting $g_j\in [\mathcal A_3]^\infty\subset G$ to be the respective limit, we conclude that $g_j\cdot b_j=a_j$. It then  follows immediately that $a_j\in G$ if and only if $b_j\in G$, finishing the proof of the proposition.
\end{proof}

\begin{prop} \label{Prop: main-filtration}
    Let $r,m\geq 1$ and $s\geq 0$ be integers such that $r$ divides both $s$ and $m$, and let $G$ be a {\normalfont\ref{main-assumption}}-restricted model group. Suppose $G$ contains $s+m$ elements $a_1,\dots, a_{s+m}$ of one of the following two recursive forms:
    \begin{enumerate}[label=(\roman*),font=\normalfont]
        \item\label{item: filtration-1} $a_1=(a_{s+m},\one,\one)(1\,2)$ and $a_j=(a_{j-1},\one,\one)$ for $j\neq 1$; or
        \item\label{item: filtration-2} $a_1=(1\,2)$, $a_{s+1}=(a_s,a_{s+m},\one)$, and $a_j=(a_{j-1},\one,\one)$ for $j\neq 1,s+1$.
    \end{enumerate} 
    Then, $G$ also contains the $r$ elements $b_1=(b_r,\one,\one)(1\,2),b_k=(b_{k-1},\one,\one)$, for $k\in \{2,\dots,r\}$.
\end{prop}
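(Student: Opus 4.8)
The plan is to exhibit, for each $j\in\{1,\dots,r\}$, an explicit element of $G$ agreeing with $b_j$ modulo the branch subgroup $\mathfrak C:=\overline{[G,G]}$, and then to invoke $\mathfrak C\subseteq G$. Write $\ell:=(s+m)/r$, which is an integer since $r\mid s$ and $r\mid m$; in fact these two hypotheses say more than $r\mid(s+m)$, namely that the residue classes $\{kr+j:0\le k\le \ell-1\}$, $j=1,\dots,r$, partition $\{1,\dots,s+m\}$ and, in Case~\ref{item: filtration-2}, that the distinguished index $s+1$ falls in the class $j\equiv 1\pmod r$ of $a_1$. For each $j$ I set the \emph{folded product} $A_j:=a_{j}\,a_{r+j}\,a_{2r+j}\cdots a_{(\ell-1)r+j}\in G$. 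The key claim I would prove is that $b_j A_j^{-1}\in\mathfrak C$ for every $j$; granting it, $b_j=(b_jA_j^{-1})A_j\in \mathfrak C\cdot G=G$, which is the assertion.

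The claim rests on the target family and the folded products obeying matching recursions. Off the wrap index, $b_j=(b_{j-1},\one,\one)$, and since each factor satisfies $a_{kr+j}=(a_{kr+j-1},\one,\one)$ for $j\ge 2$, also $A_j=(A_{j-1},\one,\one)$; hence $b_jA_j^{-1}=(b_{j-1}A_{j-1}^{-1},\one,\one)$. At the wrap index a short wreath computation gives $b_1 A_1^{-1}=(b_r a_{s+m}^{-1},\,P^{-1},\,\one)$, where $P:=a_r a_{2r}\cdots a_{(\ell-1)r}$ is the slot-$2$ contribution produced by commuting the single transposition of $a_1$ past the remaining factors, and where $A_r=P\,a_{s+m}$ since $\ell r=s+m$. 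Now $(P,P^{-1},\one)\in\mathfrak C$ by Corollary~\ref{cor: (g,ginv,1)}, so multiplying by its inverse yields $(P^{-1}b_r a_{s+m}^{-1},\one,\one)$; and because $G/\mathfrak C$ is abelian, $P^{-1}b_r a_{s+m}^{-1}\equiv b_r a_{s+m}^{-1}P^{-1}=b_rA_r^{-1}\pmod{\mathfrak C}$. Thus the wrap identity reduces $b_1A_1^{-1}\in\mathfrak C$ to $b_rA_r^{-1}\in\mathfrak C$ one coordinate deeper.

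With these two identities I would prove $b_jA_j^{-1}\in\mathfrak C$ by induction on the level $n$, showing $b_jA_j^{-1}|_{T_n}$ lies in the image of $\mathfrak C$ in $W_n$ and then using that $\mathfrak C$ is closed (as in Lemma~\ref{lem-same-closures}). The non-wrap identity descends from level $n$ to level $n-1$ through the branch inclusion $\mathfrak C\times\{\one\}\times\{\one\}\subseteq\mathfrak C$ of Proposition~\ref{Prop: G is branch over [G,G]}, and the wrap identity does the same after splitting off the Corollary~\ref{cor: (g,ginv,1)} term, again invoking $\mathfrak C\times\mathfrak C\times\mathfrak C\subseteq\mathfrak C$. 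In Case~\ref{item: filtration-2} the branch generator $a_{s+1}=(a_s,a_{s+m},\one)$ takes over the role of the wrap factor: the hypothesis $r\mid s$ is exactly what places $s+1$ in the residue class of $a_1$, so the identical bookkeeping applies. Passing to the limit gives $b_jA_j^{-1}\in\mathfrak C$, hence $b_j\in G$ for all $j$, completing the proof.

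The step I expect to be the main obstacle is the parity bookkeeping encoded in the wrap identity. Each transposition occurring in the recursion of the period-$r$ family $b_1,\dots,b_r$ must be furnished by a genuine generator $a_i$, since an odd element cannot be manufactured from $\mathfrak C$ alone ($\mathfrak C$ lies in the kernel of every level-wise sign homomorphism on $\AT$); the content of the argument is that, after inserting precisely the folded product $A_j$, the remaining discrepancy is \emph{even} and is therefore absorbed by Corollary~\ref{cor: (g,ginv,1)} together with the regular branch structure. It is exactly here that the full strength of $r\mid s$ and $r\mid m$ is required, so that the transposition of $b_1$, the period-closing section $a_{s+m}$ of the $a$-cycle, and—in Case~\ref{item: filtration-2}—the branch vertex $s+1$ all align modulo $r$.
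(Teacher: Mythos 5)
Your argument is correct, but it runs on a genuinely different engine than the paper's proof. Both start from the identical folded products (your $A_j$ are exactly the paper's $c_k:=a_ka_{k+r}\cdots a_{k+(s+m-r)}$, with the same wreath computations $c_1=(a_{s+m},\,c_ra_{s+m}^{-1},\,\one)(1\,2)$ and $c_k=(c_{k-1},\one,\one)$ for $k\neq 1$, valid in both cases \ref{item: filtration-1} and \ref{item: filtration-2} precisely because $r\mid s$ places $s+1$ in the residue class of $1$), and both finish with a level-wise induction plus closedness. The difference is how the discrepancy between $b_j$ and the folded product is absorbed: the paper shows by induction on $n$ that there exist conjugators $g_k\in G$ with $(g_kc_kg_k^{-1})|_{T_n}=b_k|_{T_n}$, building the next-level conjugators $h_1=(g_r,g_r,*)(c_ra_{s+m}^{-1},\one,*)$ and $h_k=(g_{k-1},\one,*)$ from the self-replication Lemma~\ref{lem: model-self-replicativity}, so its proof of this proposition never touches the branch structure; you instead prove the finer statement $b_jA_j^{-1}\in\mathfrak C:=\overline{[G,G]}$, absorbing the discrepancy into the branch subgroup via Corollary~\ref{cor: (g,ginv,1)} and the inclusion $\mathfrak C\times\{\one\}\times\{\one\}\subset\mathfrak C$ from (the proof of) Proposition~\ref{Prop: G is branch over [G,G]}. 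Your route is in fact closer in spirit to the paper's proof of the companion Proposition~\ref{prop: moving generators}, which corrects generators by elements of $[\Alt_3]^\infty\subset\mathfrak C$; what it buys is the sharper conclusion that $b_j$ lies in the explicit coset $A_j\mathfrak C$, pinning down its image in the abelianization $G/\mathfrak C$ (which is exactly the invariant detected by the sign homomorphisms you mention, and by the paper's Example~\ref{ex: filtrations-model-groups}), at the cost of depending on Section 6 — there is no circularity, since Proposition~\ref{Prop: G is branch over [G,G]} and Corollary~\ref{cor: (g,ginv,1)} are established before this point.

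One repair of wording is needed at the wrap step: "because $G/\mathfrak C$ is abelian" is not literally available at the infinite level, since $b_ra_{s+m}^{-1}$ is not yet known to lie in $G$, so $P^{-1}b_ra_{s+m}^{-1}\bigl(b_ra_{s+m}^{-1}P^{-1}\bigr)^{-1}$ is not a priori a commutator of elements of $G$. The correct justification is already implicit in your identity: $P^{-1}b_ra_{s+m}^{-1}=P^{-1}\bigl(b_rA_r^{-1}\bigr)P$ with $P=A_ra_{s+m}^{-1}\in G$, and $\mathfrak C$ is normal in $G$, so membership of $b_rA_r^{-1}$ in $\mathfrak C$ (or of its restriction in $\mathfrak C_n$) transfers to $P^{-1}b_ra_{s+m}^{-1}$ by conjugation. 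Alternatively, inside your level-$n$ induction the hypothesis $(b_rA_r^{-1})|_{T_n}\in\mathfrak C_n$ does place $(b_ra_{s+m}^{-1})|_{T_n}$ in $G_n$, where the abelian-quotient argument is legitimate since $[G_n,G_n]\subseteq\mathfrak C_n$. With that made explicit, every step checks out: the computation $b_1A_1^{-1}=(b_ra_{s+m}^{-1},P^{-1},\one)$ is right under the paper's right-action conventions, $(P^{-1},P,\one)\in[G,G]$ requires only $P\in G$, the non-wrap descent $b_jA_j^{-1}=(b_{j-1}A_{j-1}^{-1},\one,\one)$ is immediate, and closedness of $\mathfrak C$ yields $b_jA_j^{-1}\in\mathfrak C$, hence $b_j\in G$.
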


\begin{proof}
    In either case \ref{item: filtration-1} or \ref{item: filtration-2}, we define $c_k:=a_ka_{k+r}a_{k+2r}\cdots a_{k+(s+m-r)}\in G$ for each $k=1,\dots,r$. Then, again in both cases, we have $$c_1=(a_{s+m},a_ra_{2r}\cdots a_{s+m-r},\one)(1\,2)=(a_{s+m},c_ra_{s+m}^{-1},\one)(1\,2),$$ and for $k\neq 1$ we have $$c_k=(a_{k-1}a_{k-1+r}\cdots a_{k-1+(s+m-r)},\one,\one)=(c_{k-1},\one,\one).$$

    Now assume that for some $n\geq 0$ there exist $g_1,\dots,g_r\in G$ such that $(g_kc_kg_k^{-1})|_{T_n}=b_k|_{T_n}$ for all $k=1,\dots,r$. (This clearly holds when $n=0$.) Note that, by Lemma~\ref{lem: model-self-replicativity}, $G$ contains elements of the form $(g,\one,*)$ and $(g,g,*)$ for all $g\in G$. Hence, for $n+1$, we can choose $h_1:=(g_r,g_r,*)(c_ra_{s+m}^{-1},\one,*)\in G$ and $h_k:=(g_{k-1},\one,*)\in G$ for each $k\neq 1$. Then    $$(h_1c_1h_1^{-1})|_{T_{n+1}}=\big((g_rc_rg_r^{-1})|_{T_n},\one,\one\big)(1\,2)=(b_r|_{T_n},\one,\one)(1\,2)=b_1|_{T_{n+1}}$$ and $$(h_kc_kh_k^{-1})|_{T_{n+1}}=\big((g_{k-1}c_{k-1}g_{k-1}^{-1})|_{T_n},\one,\one\big)=(b_{k-1}|_{T_n},\one,\one)=b_k|_{T_{n+1}},$$ for $k\neq 1$. By induction, we conclude that $b_k|_{T_n}\in G_n$ for all $n\geq 0$ and $k=1,\dots, r$, and therefore $b_k\in G$ for all $k$.
\end{proof}

Let us illustrate the two propositions above by comparing several specific \ref{main-assumption}-restricted model groups. 

\begin{ex}\label{ex: filtrations-model-groups}
    Consider the elements $a,a_1,a_2,b,b_1,b_2\in W$ defined by the following recursive formulas: 
    \[
    \begin{tabular}{@{}lll}
    $a=(a,\one,\one)(1\,2)$, &$a_1=(a_2,\one,\one)(1\,2)$,  &$a_2=(a_1,\one,\one)$,\quad and\\
    $b=(\one,\one,b)(2\,3)$,  &$b_1=(\one,\one,b_2)(2\,3)$, & $b_2=(\one,\one,b_1)$. 
    \end{tabular}
    \]

    Then we have the following relations between the induced \ref{main-assumption}-restricted model groups:

    \[\llangle a,b\rrangle\subsetneq\llangle a,b_1,b_2\rrangle=\llangle a_1,a_2,b\rrangle=\llangle a_1,a_2,b_1,b_2\rrangle.\]

    All the equalities above and the non-strict inclusion $\llangle a,b\rrangle\subseteq\llangle a,b_1,b_2\rrangle$ follow directly from Propositions~\ref{prop: moving generators} and~\ref{Prop: main-filtration}. The latter inclusion can be seen to be strict using the following trick. For each $n\geq 0$, let us define a map $\Upsilon_n\colon W\to \{\pm1\}$ by $\Upsilon_n(g):=\prod_{v\in \sX^n}\sgn(g|_v)$, where $\sgn\colon W\to \{\pm1\}$ is the map sending each $h\in W$ to the sign of the permutation $h|_{T_1}\in \Sym_3$. As each $\Upsilon_n$ is a group homomorphism, the map $\Upsilon\colon W \to \{\pm1\}\times \{\pm 1\}$, $g\mapsto \big(\Upsilon_0(g),\Upsilon_1(g)\big)$, is also a group homomorphism. Now note that $\Upsilon(a)=(-1,-1)=\Upsilon(b)$, but $\Upsilon(a_1)=(-1,1)$ and $\Upsilon(a_2)=(1,-1)$. It follows that $a,b$ cannot generate $b_1,b_2$.
\end{ex}

    \begin{proof}[Proof of Theorem~\ref{thm-filtration}]
    Let $f,f'$ be the PCF cubic polynomials as in the statement, and let $(s_i,m_i), (s'_i, m'_i)$, $i=1,2$, be the corresponding sets of pairs of numbers characterizing the ramification portraits of $f,f'$, respectively. Let us further assume that these numbers satisfy the hypotheses from the theorem. By the discussion above, the profinite geometric iterated monodromy groups $G^{geom}(f)$ and $G^{geom}(f')$ are conjugate in $W$ to some \ref{main-assumption}-restricted model groups $G$ and $G'$, respectively, whose model generators follow the recursive formulas from  \eqref{Strictly pre-periodic} or \eqref{Strictly periodic}, for the appropriate values of the parameters $(s_i,m_i), (s'_i, m'_i)$, $i=1,2$. The hypotheses on these parameters imply that the group $G$ contains all the model generators of the group $G'$ by Propositions~\ref{prop: moving generators} and~\ref{Prop: main-filtration}. Hence, we have the inclusion $G'\subseteq G$ of the model groups, and thus $wG^{geom}(f')w^{-1}\subseteq G^{geom}(f)$ for some element $w \in W$.
    \end{proof}

\begin{ex}
    In the setting of Theorem~\ref{thm-filtration}, suppose that $s_1=s_2=s_1'=s_2'=0$. If $m_1=2$ and $m_2=m_1'=m_2'=1$, then we have seen that $wG^{geom}(f')w^{-1} \subseteq G^{geom}(f)$ for some $w\in W$. Moreover, this inclusion is strict for all such $w$ by Example~\ref{ex: filtrations-model-groups}. Now, if we take instead, say, $m_1=m_2=m_1'=2$ and $m_2'=1$, then we have the equality $wG^{geom}(f')w^{-1} = G^{geom}(f)$ for some $w\in W$ by the same example. 
\end{ex}

\end{document}